\newtheorem{thm}{Theorem}[section]
\newtheorem{cor}{Corollary}[section]
\newtheorem{conj}{Conjecture}[section]
\numberwithin{algorithm}{section}
\numberwithin{equation}{section}
\renewcommand{\theequation}{\thesection.\arabic{equation}}
\newcommand{\bmx}{\bm{x}}
\newcommand{\bmu}{\bm{u}}
\newcommand{\bmv}{\bm{v}}
\newcommand{\bmy}{\bm{y}}
\newcommand{\bmz}{\bm{z}}
\newcommand{\bmf}{\bm{f}}
\newcommand{\bmh}{\bm{h}}
\newcommand{\bmt}{\bm{t}}
\newcommand{\bmT}{\bm{T}}
\newcommand{\hk}{\hat{k}}
\newcommand{\amin}{\alpha_{c}^-}
\newcommand{\amax}{\alpha_{c}^+}
\newcommand{\bmtau}{\bm{\tau}}
\newcommand{\MB}[1]{\mathbb{#1}}
\newcommand{\hF}[1]{\hat{\F{#1}}}
\newcommand{\MBR}{\MB{R}}
\newcommand{\MBS}{\MB{S}}
\newcommand{\F}[1]{\mathbf{#1}}
\newcommand{\C}[1]{\mathcal{#1}}
\newcommand{\MF}[1]{\mathfrak{#1}}
\newcommand{\tx}{\tilde{x}}
\newcommand{\tu}{\tilde{u}}
\newcommand{\tbmx}{\tilde{\bmx}}
\newcommand{\tbmu}{\tilde{\bmu}}
\newcommand{\bmone}{\bm{\mathit{1}}}
\newcommand{\bmzer}{\bm{\mathit{0}}}
\def\simgt{\,\hbox{\lower0.6ex\hbox{$>$}\llap{\raise0.3ex\hbox{$\sim$}}}\,}
\def\simlt{\,\hbox{\lower0.6ex\hbox{$<$}\llap{\raise0.3ex\hbox{$\sim$}}}\,}
\def\simgteq{\,\hbox{\lower0.6ex\hbox{$\ge$}\llap{\raise0.6ex\hbox{$\sim$}}}\,}
\def\simlteq{\,\hbox{\lower0.6ex\hbox{$\le$}\llap{\raise0.6ex\hbox{$\sim$}}}\,}
\def\user@resume{resume}
\def\user@intermezzo{intermezzo}
\newcounter{previousequation}
\newcounter{lastsubequation}
\newcounter{savedparentequation}
\renewenvironment{subequations}[1][]{%
      \def\user@decides{#1}%
      \setcounter{previousequation}{\value{equation}}%
      \ifx\user@decides\user@resume 
           \setcounter{equation}{\value{savedparentequation}}%
      \else  
      \ifx\user@decides\user@intermezzo
           \refstepcounter{equation}%
      \else
           \setcounter{lastsubequation}{0}%
           \refstepcounter{equation}%
      \fi\fi
      \protected@edef\theHparentequation{%
          \@ifundefined {theHequation}\theequation \theHequation}%
      \protected@edef\theparentequation{\theequation}%
      \setcounter{parentequation}{\value{equation}}%
      \ifx\user@decides\user@resume 
           \setcounter{equation}{\value{lastsubequation}}%
         \else
           \setcounter{equation}{0}%
      \fi
      \def\theequation  {\theparentequation  \alph{equation}}%
      \def\theHequation {\theHparentequation \alph{equation}}%
      \ignorespaces
}{%
  \ifx\user@decides\user@resume
       \setcounter{lastsubequation}{\value{equation}}%
       \setcounter{equation}{\value{previousequation}}%
  \else
  \ifx\user@decides\user@intermezzo
       \setcounter{equation}{\value{parentequation}}%
  \else
       \setcounter{lastsubequation}{\value{equation}}%
       \setcounter{savedparentequation}{\value{parentequation}}%
       \setcounter{equation}{\value{parentequation}}%
  \fi\fi
  \ignorespacesafterend
}
\begin{document}
\begin{frontmatter}
\title{A Direct Integral Pseudospectral Method for Solving a Class of Infinite-Horizon Optimal Control Problems Using Gegenbauer Polynomials and Certain Parametric Maps}
\author[fahd]{Kareem T. Elgindy}
\ead{kareem.elgindy@(kfupm.edu.sa; gmail.com)}
\author[sohag]{Hareth M. Refat\corref{cor1}}
\ead{harith_refaat@science.sohag.edu.eg; hareth.mohamed.refat@gmail.com}
\cortext[cor1]{Corresponding author}
\address[fahd]{Mathematics Department, College of Computing and Mathematics, King Fahd University of Petroleum \& Minerals, Dhahran 31261, Kingdom of Saudi Arabia} 
\address[sohag]{Mathematics Department, Faculty of Science, Sohag University, Sohag 82524, Egypt} 
\begin{abstract}
We present a novel direct integral pseudospectral (PS) method (a direct IPS method) for solving a class of continuous-time infinite-horizon optimal control problems (IHOCs). The method transforms the IHOCs into finite-horizon optimal control problems (FHOCs) in their integral forms by means of certain parametric mappings, which are then approximated by finite-dimensional nonlinear programming problems (NLPs) through rational collocations based on Gegenbauer polynomials and Gegenbauer-Gauss-Radau (GGR) points. The paper also analyzes the interplay between the parametric maps, barycentric rational collocations based on Gegenbauer polynomials and GGR points, and the convergence properties of the collocated solutions for IHOCs. Some novel formulas for the construction of the rational interpolation weights and the GGR-based integration and differentiation matrices in barycentric-trigonometric forms are derived. A rigorous study on the error and convergence of the proposed method is presented. A stability analysis based on the Lebesgue constant for GGR-based rational interpolation is investigated. Two easy-to-implement pseudocodes of computational algorithms for computing the barycentric-trigonometric rational weights are described. Two illustrative test examples are presented to support the theoretical results. We show that the proposed collocation method leveraged with a fast and accurate NLP solver converges exponentially to near-optimal approximations for a coarse collocation mesh grid size. The paper also shows that typical direct spectral/PS- and IPS-methods based on classical Jacobi polynomials and certain parametric maps usually diverge as the number of collocation points grow large, if the computations are carried out using floating-point arithmetic and the discretizations use a single mesh grid whether they are of Gauss/Gauss-Radau (GR) type or equally-spaced.  
\end{abstract}

\begin{keyword}
	Algebraic map \sep Gauss-Radau points \sep Gegenbauer polynomials \sep Infinite-horizon \sep Integration matrix \sep Logarithmic map \sep Optimal Control \sep Pseudospectral method. 
\end{keyword}

\end{frontmatter}
\section{Introduction}
\label{Int}
Arguably, one of the most impactful numerical methods for solving continuous-time optimal control problems (CTOCPs) in the 20th century has been direct pseudospectral (PS) methods, which can accurately reduce CTOCPs into optimization problems of standard forms that can be easily treated using typical optimization methods. The key success of these methods lie in their ability to converge to sufficiently smooth solutions with exponential rates using relatively coarse mesh grids. PS methods are considered to be \textit{``one of the biggest technologies for solving PDEs''} that were largely developed about half a century ago since the pioneering works of \citet{orszag1971accurate} and \citet{patterson1971spectral}. They have been continuously refined and extended in later decades to solve many problems in various scientific areas that were only tractable by these techniques. Perhaps one of the brightest moments in the course of their development appeared on March 3, 2007, when an international space station completed a 180-degree maneuver without using any propellant via tracking an attitude trajectory developed with PS optimal control theory; thus, saving NASA {\$1M} \cite{kang2007pseudospectral}. PS methods are closely related to the popular class of spectral methods, but they expand the solutions in terms of their grid point values by means of interpolation in lieu of global and usually orthogonal basis polynomials. Such a nodal representation is extremely useful in the sense that the solution values are immediately available at the collocation points once the full discretization is implemented, as the the governing equations are satisfied pointwise in the physical space, whereas modal representations require a further step of computing the modal approximation after calculating the coefficients of the expansion basis functions \cite{elgindy2019high}. This places PS methods at the front of highly accurate methods that are particularly easy to apply to equations with variable coefficients and nonlinearities \cite{fornberg1994review}. Clear expositions of spectral and PS methods, exhibiting a wide range of outlooks on the subject, include the books \cite{fornberg1998practical,hesthaven2007spectral,canuto1987springer,canuto2007spectral}. A robust variant of PS methods is the class of integral PS (IPS) methods (aka PS integration methods), which is closely related to PS methods, but it requires an initial step of reformulating the dynamical system equations in their integral form first before the collocation phase starts; thus, avoids the degradation of precision often caused by numerical differentiation processes. The integral reformulation can be performed by either a direct integration of the dynamical system equations if they have constant coefficients, or by approximating the solution's highest-order derivative involved in the problem by a nodal finite series in terms of its grid point values, and then solve for those grid point values before successively integrating back in a stable manner to obtain the sought solution grid point values. The spectral approximation of the integral form of differential equations was put forward in the 1960s by \citet{clenshaw1960method} in the spectral space and by \citet{el1969chebyshev} in the physical space; cf. \cite{lee1997fast,greengard1991spectral,elgindy2013solving,elgindy2016high,elgindy2018high,elgindy2018high1}.

Among the many classes of CTOCPs, infinite-horizon optimal control problems (IHOCs) and optimal control problems defined on sufficiently large intervals have attracted a lot of research interest due to their size of applications in economics, engineering, computer science, business and management science, bio-medicine, aerospace, energy, etc.; cf. \cite{ling2012envelope,barucci2001technology,ross2012review,gao2019online,wang2022intelligent,parandehgheibi2015value,janova2016optimal,pang2022human}. Some classical results on the existence of solutions for IHOCs can be found in \cite{baum1976existence,bates1978lower,haurie1980existence,carlson1987infinite}. One of the most general and well-known results on the existence of solutions to IHOC problems was proved by \citet{balder1983existence} using the notion of uniform integrability. Sufficient conditions for the existence of a finitely optimal solution for a class of nonlinear IHOCs were derived by \citet{carlson1986existence} under minimal convexity and seminormality conditions. An existence and uniqueness theorem for a class of IHOC problems was proved by \citet{wang2005existence} under certain conditions. Existence and uniqueness results for a class of linear-quadratic, convex IHOC problems in weighted Sobolev spaces for the state and weighted Lebesgue spaces for the control were obtained by \citet{pickenhain2015infinite}. A recent extension to the existence results of \citet{balder1983existence} to the case in which the integral functional is understood as an improper integral was proved by \citet{besov2018balder} using the notion of uniform boundedness of pieces of the objective functional that was proposed earlier by \citet{dmitruk2005existence}. \citet{aseev2018existence} derived some sufficient conditions for the existence and boundedness of optimal controls for a class of generally nonlinear IHOC problems with not necessarily bounded set of control constraints. \citet{basco2019hamilton} obtained some existence and uniqueness results of weak solutions of nonautonomous Hamilton–Jacobi–Bellman equation associated with a class of IHOC problems for the class of lower semicontinuous functions vanishing at infinity and under certain conditions of controllability. The most important and well-known necessary conditions of optimality were first derived by \citet{halkin1974necessary}; cf. also \cite[Theorem 2.3]{carlson1987infinite}. 

While many direct PS methods appeared in the literature for solving finite-horizon optimal control problems (FHOCs), we could only find a few works on IHOCs using this class of methods. In particular, we recognize the Legendre–Gauss (LG) and Legendre–Gauss–Radau (LGR) PS methods of \citet{garg2002gauss,garg2011pseudospectral,garg2011direct} and the transformed Legendre spectral method of \citet{shahini2018transformed}. Although Legendre polynomials are commonly used in PS methods designed to solve IHOCs, we shall explore in our work the possibility to whether we could achieve better accuracy and convergence rates using Gegenbauer polynomials (aka ultraspherical polynomials). There are a number of reasons that prompt us to consider this family of polynomials as a viable alternative to perform discretizations of IHOCs: (i) First, observe that Gegenbauer polynomials include both Chebyshev and Legendre polynomials as part of its bigger family, so all theoretical and experimental results on Gegenbauer polynomials directly apply on Chebyshev and Legendre polynomials by definition, (ii) being part of Gegenbauer polynomials allow us to apply any of Chebyshev and Legendre polynomials with a single selection of the Gegenbauer parameter (index) $\alpha$-- simply set $\alpha = 0$ or $1/2$ in your code! Thus, giving us more flexibility, (iii) Gegenbauer polynomials are very useful in eliminating the Gibbs phenomenon and recovering the spectral accuracy up to the discontinuity points \cite{gottlieb1995gibbs,gottlieb1997gibbs,kamm2010application}, (iv) one measure for assessing the quality of spectral and PS methods in numerical discretizations is concerned with how large the number of terms is required in a spectral/PS expansion to achieve a certain level of accuracy. Of course, the smaller the number of terms the more efficient the method is in terms of speed and computational complexity. In applications like CTOCPs, this property leads to optimization problems of small-scale which can be solved very quickly with reduced computational work at a concrete level using modern optimization software \cite{elgindy2013fast,elgindy2020distributed,elgindy2018high}. Now, with this being mentioned, it is important to realize that Chebyshev and Legendre polynomials are usually optimal for large spectral/PS expansions under the Chebyshev and Euclidean norms, respectively, but they are not necessarily optimal for a small/medium range-- an observation that was proven numerically in a number of papers for certain polynomial- and rational-interpolations and collocations; cf. \cite{doha1990accurate,abd2014new,elgindy2013optimal,elgindy2013solving,
Elgindy2016,elgindy2016high,fornberg1994review}, which give us another reason to apply Gegenbauer polynomials as a proper basis polynomials that may provide faster convergence rates. (v) A stability analysis conducted in \cite{elgindy2018high1} and grounded in the Lebesgue constant for polynomial interpolations in Lagrange-basis form based on flipped-GGR (FGGR) points showed that the Lebesgue constant is not minimal for Chebyshev polynomials but rather was minimal for Gegenbauer polynomials associated with negative $\alpha$-values. This analysis proved with no doubt that some Gegenbauer polynomials with negative $\alpha$-values could be more plausible to employ in basis-form polynomial interpolation/collocation for short/medium range of mesh grid sizes. This observation is consistent with an earlier work of \citet{light1978comparison} who proved in the late 1970's that the Chebyshev and Legendre projection operators cannot be minimal as the norms of Gegenbauer projection operators increase monotonically with $\alpha$ for small expansions. 

In light of the above arguments, we are motivated in this work to develop a novel direct IPS method for solving IHOCs using Gegenbauer polynomials and study its convergence. To this end, we derive some accurate and numerically stable GGR-based rational interpolation formulas, and describe two computational algorithms for constructing them. We show also how to derive the associated quadrature formulas required for numerical integration in time. We shall then use these numerical instruments to approximate the optimal state and control variables after transforming the IHOC into a FHOC in integral form (FHOCI) by means of certain parametric maps and rational collocation. During the course of our paper presentation, we shall try also to investigate a number of interesting relevant questions to our work. For instance, which parametric map is more suitable for GGR-based rational collocations? How should we choose the Gegenbauer parameter values to carry out collocations in practice? A \textit{``poor''} choice of $\alpha$ can largely ruin the accuracy of the numerical scheme, while a \textit{``good''} choice can in many cases furnish superb approximations with higher accuracy than those enjoyed by Chebyshev and Legendre polynomials for sufficiently smooth functions using relatively coarse mesh grids! Do PS- and IPS-methods based on Chebyshev, Legendre, and Gegenbauer polynomials generally converge to the solutions of IHOCs for large mesh sizes? If they do not, then what are the causes? Through rigorous stability, error, and convergence analyses, we shall prove that such methods often converge with exponential rate to near exact solutions using relatively small mesh grids, but they usually diverge for fine meshes under certain parametric maps.

The rest of the article is organized as follows: Sections \ref{sec:PS1} and \ref{sec:TOTIHOCP1} describe the IHOC under study and its transformation into a FHOCI via various parametric maps. Section \ref{sec:NOIHOC} presents the discretization scheme of the FHOCI passing through the construction of the needed barycentric rational interpolants and their quadratures, and closing with a set up of the IPS rational collocation at the GGR points in Sections \ref{sec:BIATGP}-\ref{subsec:PSRC2404}. Section \ref{subsubsec:TLcfggr1} is devoted to analyze the stability and sensitivity of GGR-based rational interpolation/collocation developed in this paper. The optimality necessary conditions of the obtained NLP through IPS rational collocation are derived in Section \ref{sec:NCOOFTNP}. Rigorous error and convergence analyses are conducted in Section \ref{sec:errb}. Some divergence results of typical IPS collocation schemes of the FHOCI for fine meshes of Gauss Type using certain parametric maps are derived in Section \ref{subsec:DOTPCS}. Simulation results are shown in Section \ref{sec:NE1} followed by some conclusions and future works
in Section \ref{sec:CAFW1}. The derivation of the barycentric rational formulas necessary for constructing the GGR-based differentiation matrix is shown in \ref{sec:TDIM}. Two easy-to-implement pseudocodes of computational algorithms for computing the barycentric weights of the our new rational interpolation method are described in \ref{app:Alg1}.

\section{The Problem Statement}
\label{sec:PS1}
Consider the following nonlinear, autonomous control system of ordinary differential equations
\begin{equation}
\dot{\bmx}(t)= \bmf(\bmx(t),\bmu(t)), \quad t \in [0,\infty), 
\label{eq:2}
\end{equation}
subject to the system of initial conditions
\begin{equation}
\bmx(0)=\bmx_{0},
\label{eq:3}
\end{equation}
where $\bmx(t) = \left[x_{1}(t), x_{2}(t), \ldots, x_{n_{x}}(t)\right]^t \in \MBR^{n_{x}}, \bmu(t)= \left[u_{1}(t), u_{2}(t), \ldots, u_{n_{u}}(t)\right]^t \in \MBR^{n_{u}}, \bmx_0 = [x_{1,0}, x_{2,0}, \ldots, x_{n_x,0}]^t \in \MBR^{n_{x}}$ is a constant specified vector, and $\bmf:\MBR^{n_{x}} \times \MBR^{n_{u}} \rightarrow \MBR^{n_{x}}: \bmf = [f_1, f_2, \ldots, f_{n_x}]^t$. The problem is to find the optimal control $\bmu(t)$ and the corresponding state trajectory $\bmx(t)$ on the semi-infinite-domain $[0, \infty)$ that satisfy Eqs. \eqref{eq:2} and \eqref{eq:3} while minimizing the functional 
\begin{equation}
J = \int_{0}^{\infty} g(\bmx(t),\bmu(t))\, dt,
\label{eq:1}
\end{equation}
where $g: \MBR^{n_{x}} \times \MBR^{n_{u}} \rightarrow \MBR$. We assume that $\bmf$ and $g$ are generally nonlinear, continuously differentiable functions with respect to their arguments, and the nonlinear IHOC \eqref{eq:2}–\eqref{eq:1} has a unique solution. In the rest of the article, for any row/column vector $\bmy = (y_i)_{1 \le i \le n}$ with $y_i \in \MBR\, \forall i$ and real-valued function $h: \bm{\Omega} \subseteq \mathbb{R} \to \mathbb{R}$, the notation $h(\bmy)$ stands for a vector of the same size and structure of $\bmy$ such that $h(y_i)$ is the $i$th element of $h(\bmy)$. Moreover, by $\bmh(\bmy)$, we mean $[h_1(\bmy), \ldots, h_m(\bmy)]^t$, for any $m$-dimensional column vector function $\bmh$, with the realization that the definition of each array $h_i(\bmy)$ follows the former notation rule for each $i$.

\section{Transformation of the IHOC into a FHOC}
\label{sec:TOTIHOCP1}
Given a differentiable, strictly monotonic mapping $T: [0, \infty) \to [-1,1)$ defined by $T(\tau) = t$, one can transform the IHOC \eqref{eq:2}–\eqref{eq:1} into the following FHOC:
\begin{subequations}
\begin{equation}
\min J= \int_{-1}^{1} \displaystyle{
T'(\tau) g\left(\tbmx(\tau),\tbmu(\tau)\right) }\, d\tau,
\label{eq:5}
\end{equation}
subject to
\begin{equation}
\dot{\tbmx}(\tau)= T'(\tau) \bmf(\tbmx(\tau),\tbmu(\tau)),\quad \tau \in [-1,1),
\label{eq:6}
\end{equation}
\vspace{-2mm}
\begin{equation}
\tbmx(-1)=\bmx_{0},
\label{eq:7}
\end{equation}
where $\bmf\left(\tbmx(\tau),\tbmu(\tau)\right)=\left[f_{1}\left(\tbmx(\tau),\tbmu(\tau)\right),\ldots,f_{n_{x}}\left(\tbmx(\tau),\tbmu(\tau)\right)\right]^t$, and $\tilde{\boldsymbol{\eta}}(\tau) = \boldsymbol{\eta}\left(T(\tau)\right)\,\forall \boldsymbol{\eta} \in \lbrace \bmx, \bmu \rbrace$. To take advantage later of the well-conditioning of numerical integration operators during the collocation phase, we rewrite Eq. \eqref{eq:6} in its integral formulation as follows:
\begin{equation}
\tbmx(\tau)=  \int_{-1}^{\tau} \displaystyle{T'(z) \bmf\left(\tbmx(z),\tbmu(z)\right) }\, dz + \bmx_{0}, \quad \tau \in [-1,1).
\label{eq:8}
\end{equation}
\end{subequations}
We refer to the FHOC described by Eqs. \eqref{eq:5}, \eqref{eq:7}, and \eqref{eq:8} by the FHOCI. A wide variety of defining formulas exist for the mapping $T$. Five common defining formulas of such a mapping that occurred in the literature are as follows:
\begin{subequations}
\begin{align}
  &{T_{1,L}}(\tau ) = \frac{L(1 + \tau)}{{1 - \tau }},\quad (\text{\citet{boyd1987orthogonal}})\label{eq:num1}\\
  &{T_{2,L}}(\tau ) = L \ln \left( {\frac{2}{{1 - \tau }}} \right),\quad (\text{\citet{canuto1987springer}})\label{eq:num2}\\
  &{\varphi_a}(\tau ) = \frac{{1 + \tau }}{{1 - \tau }},\quad (\text{\citet{fahroo2008pseudospectral}})\label{eq:fahroo2008p}\\
  &
                \negthickspace
  \begin{rcases}  
  {\varphi_b}(\tau ) = \ln \left( \displaystyle{{\frac{2}{{1 - \tau }}}} \right),\label{garg2011a}\\
  {\varphi_c}(\tau ) = \ln \left( \displaystyle{{\frac{4}{{{{(1 - \tau )}^2}}}}} \right),
  \end{rcases}\quad (\text{\citet{garg2011advances}})  
\end{align}
\end{subequations}
where $L \in \MBR^+$ is a scaling parameter that can stretch the image of the interval $[-1,1]$ in the codomain $[0, \infty)$ as desired. An \textit{``optimal''} choice of $L$-value can significantly improve the quality of the discrete approximations as we shall demonstrate later in Section \ref{sec:NE1}. We refer to $L$ by \textit{``the map scaling parameter.''} The parametric maps $T_{i,L}, i = 1, 2$ are often referred to by the \textit{``algebraic''} and \textit{``logarithmic''} maps in the literature, respectively. Notice that the maps defined by Eqs. \eqref{eq:fahroo2008p} and \eqref{garg2011a} are special cases of the parametric maps $T_{i,L}, i = 1, 2$; in particular, $T_{1, 1} = \varphi_a, T_{2,1} = \varphi_b$, and $T_{2,2} = \varphi_c$. Since the value of either parametric map varies when $\alpha$ varies for arguments of type GGR points, it is more convenient to denote them by $T_{i,L}^{(\alpha)}, i = 1, 2$ to emphasize this fact. Mesh-like surfaces of both parametric mappings are shown in Figures \ref{fig:IHtp1} and \ref{fig:IHtp2}, for several values of $n, L$, and $\alpha$. Both figures show that the parametric mappings (i) increase monotonically for decreasing values of $\alpha$ while holding $n$ and $L$ fixed, (ii) increase monotonically for increasing values of $L$ while holding $n$ and $\alpha$ fixed, and (iii) increase monotonically for increasing values of $n$ while holding $L$ and $\alpha$ fixed. Moreover, near $\tau = 1$, the rate of increase of ${T_{1,L}^{(\alpha)}}$ with respect to any of the arguments $n, L$, and $\alpha$ while holding the others fixed is much larger than that of ${T_{2,L}^{(\alpha)}}$ which grows very slowly. Loosely put, the stretching of the mesh grid near $\tau = 1$ is stronger for $T_{1,L}^{(\alpha)}$ than $T_{2,L}^{(\alpha)}$.

\begin{figure}[h]
\includegraphics[scale=0.35]{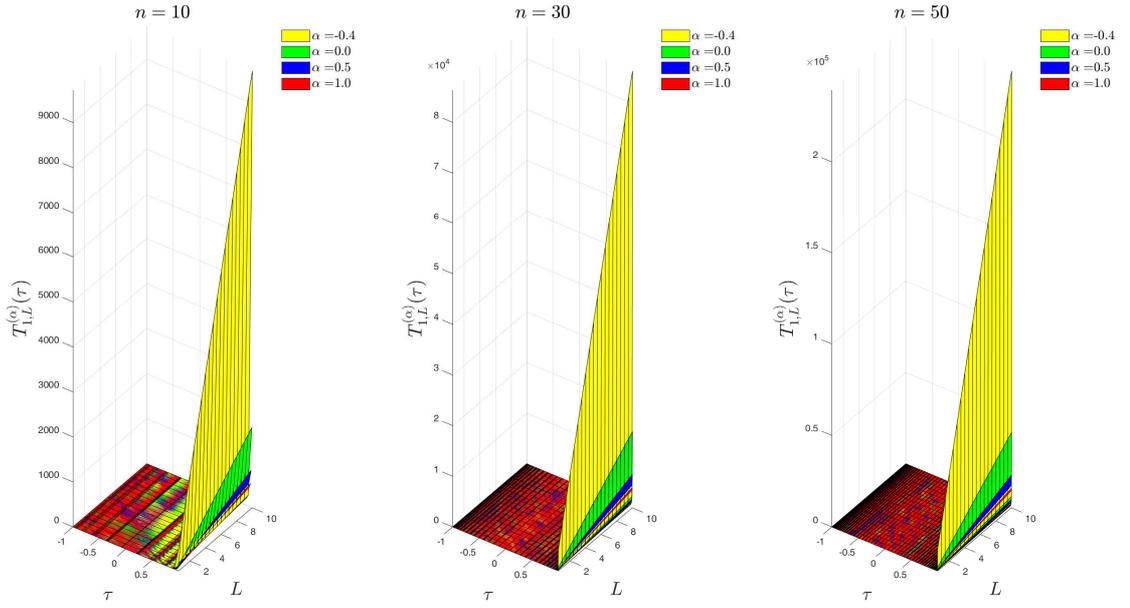}
\caption{Mesh-like surfaces of the parametric mapping $T_{1,L}^{(\alpha)}$ on the discrete rectangular domain $\F{\Omega}_n = \{(\tau_i, L): L = 0.5(0.5)10, i = 0, \ldots,n\}$, for $n = 10(20)50$ and $\alpha = -0.4, 0, 0.5, 1$.}
\label{fig:IHtp1}
\end{figure}
\begin{figure}[h]
\includegraphics[scale=0.35]{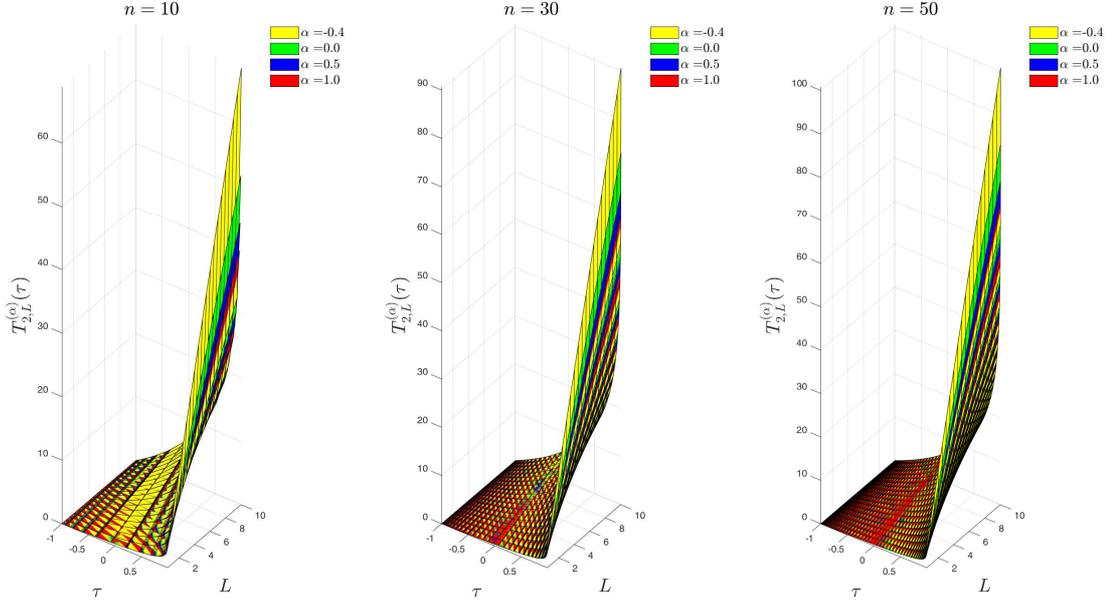}
\caption{Mesh-like surfaces of the parametric mapping $T_{2,L}^{(\alpha)}$ on the discrete rectangular domain $\F{\Omega}_n = \{(\tau_i, L): L = 0.5(0.5)10, i = 0, \ldots,n\}$, for $n = 10(20)50$ and $\alpha = -0.4, 0, 0.5, 1$.}
\label{fig:IHtp2}
\end{figure}

\section{Numerical Discretization of the FHOCI}
\label{sec:NOIHOC}
In this section, we provide a description of the proposed numerical discretization of the FHOCI using an IPS method based on Gegenbauer polynomials and GGR points. 

\subsection{Barycentric Rational Interpolation at the GGR Points}
\label{sec:BIATGP}
Let $\MB{Z}^{+}$ be the set of positive integers, $G_{n}^{(\alpha)}(\tau)$ be the $n$th-degree Gegenbauer polynomial with $\alpha > -1/2$ and $G_{n}^{(\alpha)}(1) = 1,\;\forall n \in \MB{Z}_{0}^{+}= \MB{Z}^{+}\cup \lbrace 0 \rbrace$, and $\MBS_{n} = \left\{\tau_{k}: \C{G}_{n+1}^{(\alpha)}\left(\tau_k\right) = 0\text{ for }k = 0, \ldots, n,\text{ and } -1 = \tau_0 < \tau_1 < \ldots < \tau_n\right\}$ be the set of GGR nodes, where $\C{G}_{n+1}^{(\alpha)}(\tau) = G_{n}^{(\alpha)}(\tau) + G_{n+1}^{(\alpha)}(\tau)$. The orthonormal Gegenbauer basis polynomials are defined by $\phi_{j}^{(\alpha)}(\tau) = G_{j}^{(\alpha)}(\tau)/\sqrt{\lambda_{j}}$, where
\begin{equation}\label{eq:11n}
\lambda_{j} = \frac{2^{2\alpha-1} j! \Gamma^{2}{(\alpha+\frac{1}{2})}}{(j+\alpha)\Gamma(j+2 \alpha)},\quad j = 0, \ldots, n.
\end{equation}
They satisfy the discrete orthonormality relation
\begin{equation}\label{eq:13app}
\sum \limits_{j=0}^{n} \varpi_j\, \phi_{s}^{(\alpha)} (\tau_{j}) \phi_{k}^{(\alpha)} (\tau_{j}) = \delta_{sk},\quad s,k = 0, \ldots, n,
\end{equation}
where $\varpi_j, j = 0, 1, \ldots, n$, are the corresponding Christoffel numbers of the GGR quadrature formula on the interval $[-1,1]$ defined by
\begin{subequations}\label{subeq:14}
\begin{align}
\varpi_{0} &= \left(\alpha +\frac{1}{2}\right) \vartheta_{0},\label{subeq:14_1}\\
 \varpi_j &= \vartheta_{j},\quad j = 1, 2, \ldots, n,\label{subeq:14_2}
\end{align}
with
\begin{equation}\label{eq:15}
\vartheta_{j} = 2^{2 \alpha-1} \frac{\Gamma^{2}{\left(\alpha + \frac{1}{2}\right)}\,n!}{\left(n+\alpha+\frac{1}{2}\right) \Gamma{(n+2\alpha+1)}} \left(1-\tau_{j}\right) \left(G_{n}^{(\alpha)}(\tau_{j})\right)^{-2},\quad j = 0, 1, \ldots, n.
\end{equation}
\end{subequations}
Given a set of $n + 1$ data points $\left\{(\tau_i,f_i)\right\}_{i=0}^n$, the Gegenbauer polynomial interpolant $P_{n}f$ in Lagrange form is defined by 
\begin{equation}
P_{n}f(\tau) = \sum_{i=0}^{n} f_{i} \C{L}_{n,i}(\tau),
\label{eq:new3}
\end{equation}  
where $\C{L}_{n,i}$ are the Lagrange polynomials given by
\[\C{L}_{n,i}(\tau) = \frac{{\prod\nolimits_{k \ne i} {(\tau - {\tau _k})} }}{{\prod\nolimits_{k \ne i} {({\tau _i} - {\tau _k})} }},\quad \forall i.\]
$P_{n}f$ can be evaluated fast and more stably by evaluating Lagrange polynomials through the \textit{``true''} barycentric formula
\begin{equation}
\C{L}_{n,i}(\tau) = \frac{\xi_{i}}{\tau-\tau_{i}}/\sum_{j=0}^{n}\frac{\xi_{j}}{\tau-\tau_{j}}, \quad \forall i,
\label{eq:new1}
\end{equation}
which brings into play the barycentric weights $\xi_{i}, i = 0, \ldots, n$, given by
\begin{equation}
\xi_{i} = \frac{1}{\prod_{i\ne j}^{n} \left(\tau_{j}-\tau_{i}\right)}, \quad \forall i.
\label{eq:new2}
\end{equation}
An interpolation in Lagrange form with Lagrange polynomials defined by Eqs. \eqref{eq:new1} is often referred to by \textit{``a barycentric rational interpolation.''} The barycentric weights associated with the GGR points can be expressed explicitly in terms of the corresponding Christoffel numbers through the following theorem.
\begin{thm}\label{th:new1}
The barycentric weights for the GGR points are given by
\begin{subequations}\label{subeq:new}
\begin{align}
\xi_{0} &=-\sqrt{(2\alpha+1)\varpi _{0}},\label{subeq:news1}\\
\xi_{i}&=(-1)^{i-1}\sqrt{\left(1-\tau_{i}\right)\varpi _{i}}, \quad i=1,2,\ldots,n.\label{subeq:news2}
\end{align}
\end{subequations}
\end{thm}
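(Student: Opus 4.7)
The plan is to reduce the barycentric weights to the derivative of the node polynomial, identify the latter with $\C{G}_{n+1}^{(\alpha)}$, and then match the resulting expression against the explicit Christoffel-number formula \eqref{eq:15}. Starting from the defining formula \eqref{eq:new2}, I would write $\xi_i = (-1)^n/\ell'(\tau_i)$ with $\ell(\tau) = \prod_{j=0}^{n}(\tau-\tau_j)$. Since the barycentric Lagrange representation \eqref{eq:new1} is invariant under a common rescaling of all $\xi_i$'s, both the factor $(-1)^n$ and the inverse of the leading coefficient of the node polynomial can be absorbed. Because the GGR set $\MBS_n$ is precisely the zero-set of $\C{G}_{n+1}^{(\alpha)}(\tau)=G_{n}^{(\alpha)}(\tau)+G_{n+1}^{(\alpha)}(\tau)$, and $\deg \C{G}_{n+1}^{(\alpha)} = n+1$, I would identify $\ell$ with $\C{G}_{n+1}^{(\alpha)}$ up to a constant, so that up to that common constant $\xi_i \propto 1/\C{G}_{n+1}^{(\alpha){}'}(\tau_i)$.

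For an interior node $\tau_i$ ($i\ge 1$), I would exploit the defining relation $G_{n+1}^{(\alpha)}(\tau_i) = -G_n^{(\alpha)}(\tau_i)$ together with the Gegenbauer derivative identity $(1-\tau^2)\,\frac{d}{d\tau}G_n^{(\alpha)}(\tau)= A_n(\tau)\,G_n^{(\alpha)}(\tau)+B_n\,G_{n+1}^{(\alpha)}(\tau)$ (written for the normalization $G_n^{(\alpha)}(1)=1$ used throughout the paper) to express both $G_n^{(\alpha){}'}(\tau_i)$ and $G_{n+1}^{(\alpha){}'}(\tau_i)$ as scalar multiples of $G_n^{(\alpha)}(\tau_i)$. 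After cancellation, the sum collapses to $\C{G}_{n+1}^{(\alpha){}'}(\tau_i) = K_n^{(\alpha)}\,G_n^{(\alpha)}(\tau_i)/(1-\tau_i)$ with an explicit constant $K_n^{(\alpha)}$ independent of $i$. Substituting this into the reciprocal, I would use \eqref{eq:15} to replace the factor $(1-\tau_i)/|G_n^{(\alpha)}(\tau_i)|$ by $\sqrt{(1-\tau_i)\vartheta_i/C_n^{(\alpha)}}$, where $C_n^{(\alpha)}$ is the constant prefactor in \eqref{eq:15}. Matching these constants with the overall rescaling freedom absorbs the gamma/factorial coefficients and yields $|\xi_i| = \sqrt{(1-\tau_i)\varpi_i}$ for $i\ge 1$, which is the content of \eqref{subeq:news2} up to sign.

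The sign pattern $(-1)^{i-1}$ comes from the interlacing property: between consecutive GGR nodes lies exactly one zero of $G_n^{(\alpha)}$, so $\mathrm{sign}\bigl(G_n^{(\alpha)}(\tau_i)\bigr)$ alternates with $i$. Since $\C{G}_{n+1}^{(\alpha){}'}(\tau_i)$ inherits this alternation via the identity derived above, $\mathrm{sign}(\xi_i)$ alternates correspondingly, and the overall sign is fixed so that $\xi_1>0$. For the boundary node $\tau_0=-1$, I would evaluate $\C{G}_{n+1}^{(\alpha){}'}(-1)$ directly using the standard boundary values of Gegenbauer polynomials (namely $G_n^{(\alpha)}(-1)=(-1)^n$ and the corresponding closed form for $G_n^{(\alpha){}'}(-1)$), and then match against the exceptional weight $\varpi_0=(\alpha+\tfrac12)\vartheta_0$ from \eqref{subeq:14_1}. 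The extra factor of $(\alpha+\tfrac12)$ compared to the interior formula is exactly what produces $\sqrt{(2\alpha+1)\varpi_0}$, and a single consistency check on $\mathrm{sign}(\ell'(-1))$ (every factor $-1-\tau_j$ is negative, giving $(-1)^n$) fixes the minus sign in \eqref{subeq:news1}.

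The main obstacle will be the algebraic reduction in the second paragraph: obtaining $\C{G}_{n+1}^{(\alpha){}'}(\tau_i)$ as a clean multiple of $G_n^{(\alpha)}(\tau_i)/(1-\tau_i)$ with the correct constant so that, after the square root, every gamma and factorial factor in \eqref{eq:15} is absorbed into the common barycentric rescaling and leaves precisely $\sqrt{(1-\tau_i)\varpi_i}$. A secondary but delicate point is the sign bookkeeping — the interlacing argument only pins down the alternation, so the overall sign of the family $\{\xi_i\}$ must be fixed once and for all by a single reference computation (most naturally at $\tau_0=-1$, where the product $\prod_{j\ne 0}(\tau_j-\tau_0)$ is transparently positive), and this reference then forces both the minus sign in $\xi_0$ and the $(-1)^{i-1}$ convention in $\xi_i$.
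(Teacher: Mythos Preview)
Your approach is sound but takes a genuinely different route from the paper. The paper does not compute $\C{G}_{n+1}^{(\alpha)\,\prime}(\tau_i)$ via Gegenbauer derivative identities at all; instead, it first uses the Jacobi--Gegenbauer connection (Szeg\H{o}'s Eq.~(4.5.4) together with the normalization relation in \cite{elgindy2013fast}) to show that $(1+\tau)P_n^{(\alpha-1/2,\alpha+1/2)}(\tau)$ is a constant multiple of $\C{G}_{n+1}^{(\alpha)}(\tau)$, whence the GGR nodes coincide with the Jacobi--Gauss--Radau nodes for the parameter pair $(\alpha-1/2,\alpha-1/2)$. The result then follows immediately by substituting $\alpha\to\alpha-\tfrac12$, $\beta\to\alpha-\tfrac12$ into the explicit barycentric-weight formula of \cite[Theorem~3.6]{wang2014explicit}. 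So the paper's argument is essentially a two-line reduction to an existing theorem, whereas your proposal is a self-contained derivation from the Christoffel numbers \eqref{eq:15} via the node-polynomial derivative. Your route buys independence from the cited result and makes the mechanism (why the square root of $(1-\tau_i)\varpi_i$ appears) transparent, at the cost of the algebraic bookkeeping you correctly flag as the main obstacle; the paper's route buys brevity but hides the computation inside a citation.

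One small caution on your sign argument: from \eqref{eq:new2} the raw weight satisfies $\mathrm{sign}(\xi_0)=(-1)^n$, not $-1$, so the minus sign in \eqref{subeq:news1} is a normalization choice rather than something forced by $\prod_{j\ne 0}(\tau_j-\tau_0)>0$. Since the barycentric representation is invariant under common rescaling, the theorem's formulas implicitly fix that overall constant; just be explicit that you are absorbing a factor of $(-1)^n$ (and the leading-coefficient ratio) into the free normalization when you ``fix the overall sign once and for all.''
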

\begin{proof}
Let $P_n^{(\alpha, \beta)}(\tau)$ be the Jacobi polynomial of degree $n$ and associated with the parameters $\alpha, \beta > -1$ as normalized by \citet{szeg1939orthogonal}. Through \citet[Eq. (4.5.4)]{szeg1939orthogonal} and \citet[Eq. (A.1)]{elgindy2013fast}, we have
\begin{align*}
&(1 + \tau)P_n^{(\alpha  - 1/2,\alpha  + 1/2)}(\tau) = \frac{2}{{2n + 2\alpha  + 1}}\left[ {(n + \alpha  + 1/2)\,P_n^{(\alpha  - 1/2,\alpha  - 1/2)}(\tau) + (n + 1)\,P_{n + 1}^{(\alpha  - 1/2,\alpha  - 1/2)}(\tau)} \right] \hfill \\
   &= \frac{2}{{2n + 2\alpha  + 1}}\left[ {\frac{{(n + \alpha  + 1/2)\,\Gamma (n + \alpha  + 1/2)}}{{n!\,\Gamma (\alpha  + 1/2)}} G_n^{(\alpha )}(\tau ) + \frac{{(n + 1)\,\Gamma (n + \alpha  + 3/2)}}{{(n + 1)!\,\Gamma (\alpha  + 1/2)}}G_{n + 1}^{(\alpha )}(\tau )} \right] = \frac{{\Gamma (n + \alpha  + 1/2)}}{{n!\,\Gamma (\alpha  + 1/2)}}\C{G}_{n + 1}^{(\alpha )}(\tau).
\end{align*}
Therefore, $(1+\tau_i) P_{n+1}^{(\alpha-1/2,\,\alpha+1/2)}(\tau_i) = \C{G}_{n+1}^{(\alpha)}(\tau_i)\,\forall i$, and Formulas \eqref{subeq:news1} and \eqref{subeq:news2} can be derived from \cite[Theorem 3.6]{wang2014explicit} by replacing both $\alpha$ and $\beta$ with $\alpha-1/2$.
\end{proof}
Recall that the GGR points cluster near $\pm 1$ as $n \rightarrow \infty$, so Eq. \eqref{subeq:news2} may suffer from cancellation errors for values of $\tau_i$ sufficiently close to $1$. The following theorem provides two alternative trigonometric forms of Formula \eqref{subeq:news2}.
\begin{thm}\label{th:new2}
The barycentric weights corresponding to the interior GGR points are given by
\begin{subequations}
\begin{align}
\xi_{i} &=(-1)^{i-1} \sin{\left(\frac{1}{2}\cos^{-1}{\tau_{i}}\right)} \sqrt{2 \varpi _{i}},\label{subeq:new2s2}\\
        &=(-1)^{i-1} \sin{\left(\cos^{-1}{\tau_{i}}\right)} \sqrt{\frac{\varpi _{i}}{1+\tau_{i}}}, \quad i=1,2,\ldots,n.\label{subeq:new3s2}
\end{align}
\end{subequations}
\end{thm}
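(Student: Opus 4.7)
The plan is to derive both trigonometric expressions directly from Formula \eqref{subeq:news2} of Theorem \ref{th:new1} by reshaping the factor $\sqrt{1-\tau_i}$ using elementary trigonometric identities, while taking care that the sign chosen for each square root is consistent with the fact that $\tau_i \in (-1,1)$ for all interior nodes ($i=1,\ldots,n$).

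First I would recall the half-angle identity $1-\cos\theta = 2\sin^2(\theta/2)$. Setting $\theta = \cos^{-1}\tau_i$ so that $\cos\theta = \tau_i$, this gives
\[
1-\tau_i \;=\; 2\sin^{2}\!\left(\tfrac{1}{2}\cos^{-1}\tau_i\right).
\]
Since each interior GGR node satisfies $\tau_i \in (-1,1)$, we have $\cos^{-1}\tau_i \in (0,\pi)$ and hence $\tfrac{1}{2}\cos^{-1}\tau_i \in (0,\pi/2)$, on which the sine is strictly positive; this fixes the correct (positive) branch of the square root and yields
\[
\sqrt{1-\tau_i} \;=\; \sqrt{2}\,\sin\!\left(\tfrac{1}{2}\cos^{-1}\tau_i\right).
\]
Substituting this into \eqref{subeq:news2} produces Formula \eqref{subeq:new2s2} immediately.

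For the second form, I would invoke the Pythagorean identity together with the factorization $1-\tau_i^2 = (1-\tau_i)(1+\tau_i)$, giving
\[
\sin(\cos^{-1}\tau_i) \;=\; \sqrt{1-\tau_i^{2}} \;=\; \sqrt{1-\tau_i}\,\sqrt{1+\tau_i},
\]
where positivity of $\sin(\cos^{-1}\tau_i)$ on $(0,\pi)$ again justifies the choice of branch, and $\sqrt{1+\tau_i}$ is well-defined and positive because $\tau_i > -1$. Solving for $\sqrt{1-\tau_i}$ and substituting into \eqref{subeq:news2} gives Formula \eqref{subeq:new3s2}.

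There is no real obstacle in this proof; the only subtle point is the branch-of-square-root bookkeeping, which is handled uniformly by the observation that every interior GGR node lies strictly inside $(-1,1)$, so both $\sin\!\left(\tfrac{1}{2}\cos^{-1}\tau_i\right)$ and $\sin(\cos^{-1}\tau_i)$ are strictly positive and the signs $(-1)^{i-1}$ transferred verbatim from Theorem \ref{th:new1} remain correct. The reformulations are numerically meaningful, however: they replace $\sqrt{1-\tau_i}$, which suffers from catastrophic cancellation when $\tau_i$ clusters near $1$ for large $n$, by a sine of a small angle that can be evaluated accurately, which is precisely the motivation announced just before the theorem statement.
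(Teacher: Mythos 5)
Your proposal is correct and follows essentially the same route as the paper's own proof: the half-angle identity $1-\cos\theta = 2\sin^2(\theta/2)$ with $\theta=\cos^{-1}\tau_i$ for Formula \eqref{subeq:new2s2}, and the factorization $1-\tau_i^2=(1-\tau_i)(1+\tau_i)$ for Formula \eqref{subeq:new3s2}, substituted into \eqref{subeq:news2}. Your explicit branch-of-square-root bookkeeping is a minor addition the paper leaves implicit, but it changes nothing substantive.
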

\begin{proof}
Through the change of variables $\tau = \cos{\theta}$ and the double angle rule for the cosine function, we find that 
\[\sqrt {1 - {\tau _i}}  = \sqrt {1 - \cos \theta _i}  = \sqrt {2{{\sin }^2}\left( {\frac{1}{2}{\theta _i}} \right)}  = \sqrt 2 \sin \left({\frac{1}{2}{{\cos }^{ - 1}} {\tau _i}} \right),\quad i=1,2,\ldots, n,\]
from which Formula \eqref{subeq:new2s2} is derived. Formula \eqref{subeq:new3s2} is established by realizing that
\[\sqrt {1 - {\tau _i}}  = \sqrt {\frac{1 - {\tau _i}^{2}}{1 + {\tau _i}}}= \frac{\sin{\left(\cos^{-1}{\tau_{i}}\right)}}{\sqrt{1 + {\tau _i}}},\quad i=1,2,\ldots, n,\]
which completes the proof.  
\end{proof}
We refer to Formulas \eqref{subeq:new2s2} and \eqref{subeq:new3s2} by the trigonometric-barycentric weights. Formula \eqref{subeq:news2} is faster to compute and requires a smaller number of arithmetic operations compared with Formulas \eqref{subeq:new2s2} and \eqref{subeq:new3s2}, but the latter two formulas may possibly produce smaller errors near $\tau = 1$ as we observed through numerical experiments. This suggests to perform Formula \eqref{subeq:news2} for all values of $\tau$, except when $\tau$ is sufficiently close to $1$, where we switch to the other trigonometric forms. To this end, we introduce \textit{``a switching parameter,''} $0 < \varepsilon \ll 1$, at which the interchange of formulas is performed. The crossover value of $\varepsilon$, where it becomes more accurate to use the trigonometric form, will depend on the implementation; a prescription of this strategy is outlined in Algorithms \ref{algorithm:1} and \ref{algorithm:2}. We refer to the explicit formulas used in Algorithms \ref{algorithm:1} and \ref{algorithm:2} to compute the barycentric weights by the \textit{``first- and second-switching formulas''} of the barycentric weights for the GGR points, respectively. We also denote the errors in computing the barycentric weights using Formula \eqref{subeq:news2}, Algorithm \ref{algorithm:1}, and Algorithm \ref{algorithm:2}, by $E_{i,n}^{(\alpha)}$, for $i = 1, 2,$ and $3$, respectively. Figures \ref{fig:E1E2_vareps1} and \ref{fig:E1E2_vareps2} show comparisons between the three strategies in terms of error, for a certain value range of $\varepsilon, n$, and $\alpha$. While Algorithm \ref{algorithm:1} does not look promising against the usual Formula \eqref{subeq:news2} relative to the given input data as clearly seen from Figure \ref{fig:E1E2_vareps1}, Figure \ref{fig:E1E2_vareps2} manifests that Algorithm \ref{algorithm:2} is more numerically stable near $\varepsilon = 0.1$ and provides better approximations. We shall therefore use the latter algorithm with $\varepsilon = 0.1$ for the computation of the barycentric weights, and refer to the rational interpolation with barycentric weights obtained through the second switching formulas by the \textit{``switched rational (SR) interpolation.''}

\begin{figure}[h]
\includegraphics[scale=0.35]{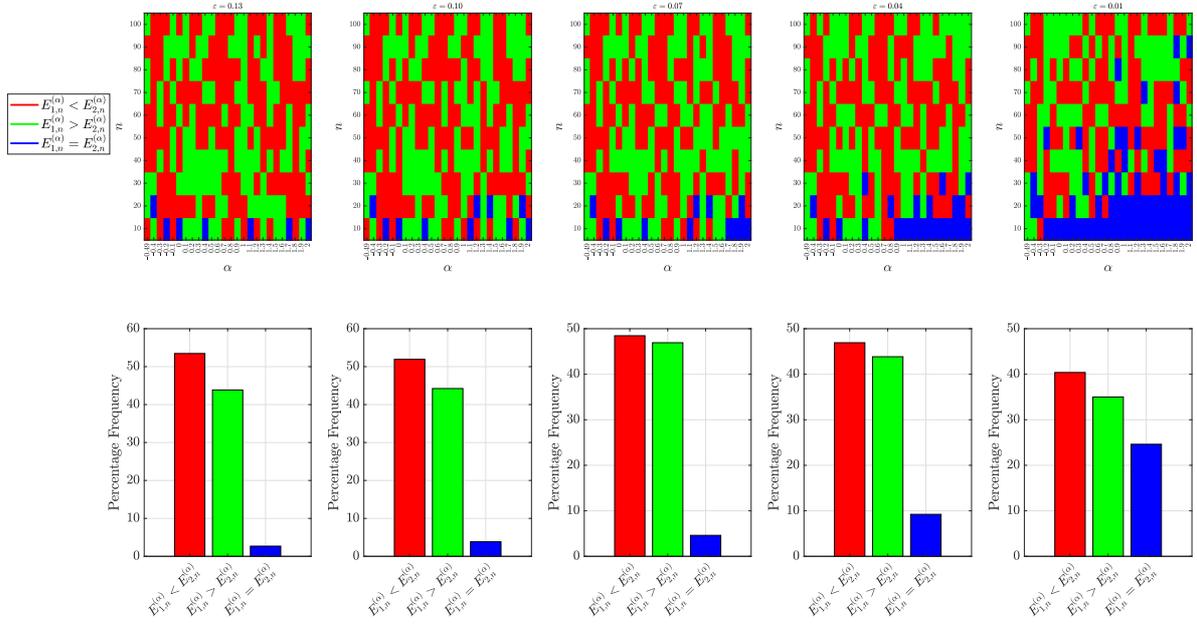}
\caption{The first row displays the color maps corresponding to the switching parameter values $\varepsilon = 0.13, 0.1, 0.07, 0.04$, and $0.01$. Each color map shows areas delineated by red, green, and blue colors. Each color specifies whether $E_{1,n}^{(\alpha)}$ is smaller/larger/equal to $E_{2,n}^{(\alpha)}$, for $n = 10(10)100$ and $\alpha = -0.49, -0.4(0.1)2$. The second row shows the percentage frequency that $E_{1,n}^{(\alpha)}$ occurs as smaller/larger/equal to $E_{2,n}^{(\alpha)}$ in each color map. All computations were carried out using MATLAB in double-precision floating-point arithmetic.}
\label{fig:E1E2_vareps1}
\end{figure}

\begin{figure}[h]
\includegraphics[scale=0.35]{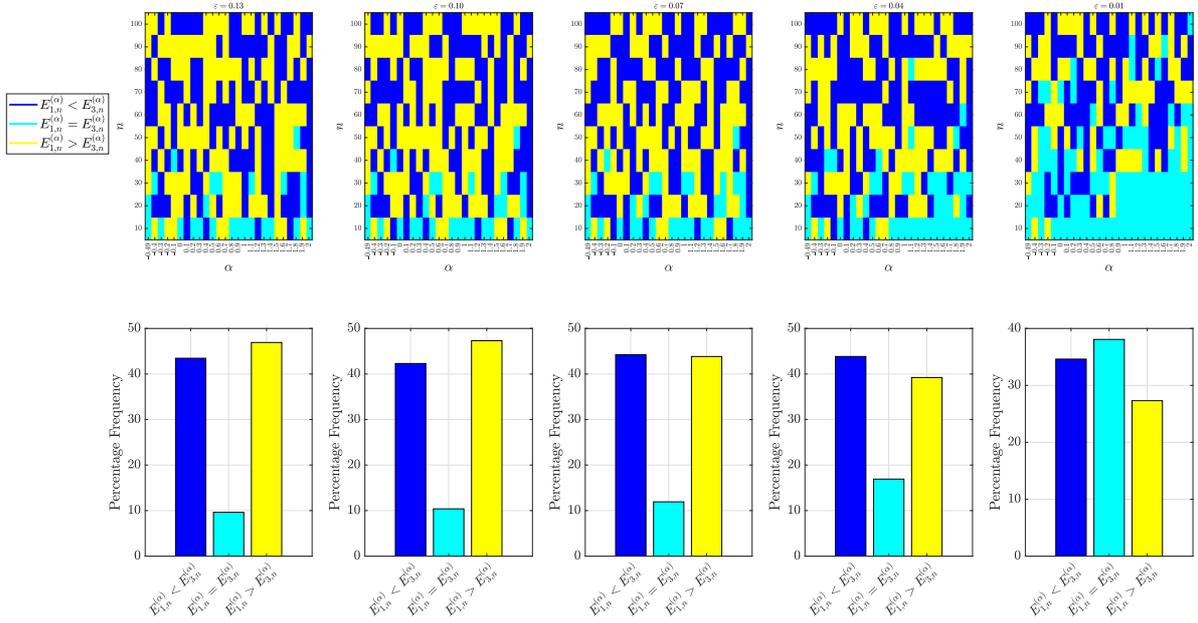}
\caption{The first row displays the color maps corresponding to the switching parameter values $\varepsilon = 0.13, 0.1, 0.07, 0.04$, and $0.01$. Each color map shows areas delineated by blue, cyan, and yellow colors. Each color specifies whether $E_{1,n}^{(\alpha)}$ is smaller/larger/equal to $E_{3,n}^{(\alpha)}$, for $n = 10(10)100$ and $\alpha = -0.49, -0.4(0.1)2$. The second row shows the percentage frequency that $E_{1,n}^{(\alpha)}$ occurs as smaller/larger/equal to $E_{3,n}^{(\alpha)}$ in each color map. All computations were carried out using MATLAB in double-precision floating-point arithmetic.}
\label{fig:E1E2_vareps2}
\end{figure}

\subsubsection{Stability and Sensitivity Analyses of GGR-Based SR-Interpolation/Collocation}
\label{subsubsec:TLcfggr1}
A valuable device for measuring the quality and numerical stability of polynomial interpolations is Lebesgue constant, as it provides a measure of how close the interpolant of a function is to the best polynomial approximant of the function. The Lebesgue constant is also very useful in assessing the quality of approximate solutions obtained through collocation (aka collocated solutions), as their accuracy is related to the rate at which the Lebesgue constant increases. 

Let $\left\| f \right\|_{\MBS}=\sup{\lbrace |f(x)|:x \in \MBS\rbrace}$ be the uniform norm (or supremum norm) of a real-valued, bounded function $f$ defined on a set $\MBS \subseteq  \MB{R}$. Suppose that $y_i, i = 0, \ldots ,n$ and $y_{c,n}(\tau)$ denote the exact solution values at the GGR points $\tau_{i}, i = 0, \ldots ,n,$ and the corresponding collocated solution, respectively. Let also $p_n^*y$ and $p_{n}y$ be the best polynomial approximation to the exact solution $y$ on $[-1,1)$ and the Lagrange interpolating polynomial of degree at most $n$ that interpolate the data set $\left\{ {\left( {\tau_{i}, y_i} \right)} \right\}_{i = 0}^n$ respectively. Through the uniqueness of Lagrange interpolation, one can easily show that
\begin{equation}
\left| {p_n^*y(\tau ) - {p_n}y(\tau )} \right| = \left| {\sum\limits_{i = 0}^n {\left[ {p_n^*y({\tau _i}) - {y_i}} \right]{\C{L}_{n,i}}(\tau )} } \right| \le \Lambda _n^{(\alpha )}{\left\| {y - p_n^*y} \right\|_{[ - 1,1)}},
\end{equation}
where $\Lambda _n^{(\alpha )} = \mathop {\max }\nolimits_{-1 \le x < 1} \sum\limits_{i = 0}^n {\left| {\C{L}_{n,i}(\tau)} \right|}$ denotes the Lebesgue constant associated with GGR-based SR-interpolation. Therefore,
\begin{gather}
{\left\| {y - {y_{c,n}}} \right\|_{[ - 1,1)}} = {\left\| {y - p_n^*y + p_n^*y - {p_n}y + {p_n}y - {y_{c,n}}} \right\|_{[ - 1,1)}}\nonumber\\
 \le {\left\| {y - p_n^*y} \right\|_{[ - 1,1)}} + {\left\| {p_n^*y - {p_n}y} \right\|_{[ - 1,1)}} + {\left\| {{y_{c,n}} - {p_n}y} \right\|_{[ - 1,1)}} \le \left( {1 + \Lambda _n^{(\alpha )}} \right){\left\| {y - p_n^*y} \right\|_{[ - 1,1)}} + {\left\| {\delta {y_{c,n}}} \right\|_{[ - 1,1)}},\label{eq:collocationnew1}
\end{gather}
where $\delta y_{c,n}(\tau)$ is the difference between Lagrange interpolating polynomial and the collocated solution. When ${\left\| {\delta {y_{c,n}}} \right\|_{[ - 1,1)}} \approx 0, \Lambda _n^{(\alpha )}$ roughly bounds the collocation error in the sense that it nearly quantifies how much larger the collocation error ${\left\| {y - {y_{c,n}}} \right\|_{[ - 1,1)}}$ is compared to the smallest possible error, ${\left\| {y - p_n^*y} \right\|_{[ - 1,1)}}$, in the worst case. In this case, it is obvious from Ineq. \eqref{eq:collocationnew1} that the smaller the Lebesgue constant, the better the predicted collocated solution is in the uniform norm. In other words, the collocation error is about at most a factor $1+\Lambda _n^{(\alpha )}$ worse than the best possible polynomial approximation. One can clearly see also that $\Lambda _n^{(\alpha )}$ depends on the location of the collocation points $\tau_i,\,i=0,\ldots, n$ but not on the solution values $y_i,\,i=0,\ldots,n$. Since the positions of the GGR points change as $\alpha$ and $n$ vary, we are interested to learn the  apt choices of $\alpha$ that makes $\Lambda _n^{(\alpha )}$ as small as possible while holding $n$ fixed. This can provide some useful insight on how should we select the candidate range of $\alpha$-values often used for collocations based on GGR points. In \cite{elgindy2018high1}, our findings uncovered that $\Lambda _n^{(\alpha )}$ for FGGR-based polynomial interpolation in Lagrange-basis form blows up as $\alpha \to -0.5$ and monotonically increase for increasing positive values of $\alpha$; cf. \cite[Eqs. (3.7) and (3.8)]{elgindy2018high1}. Moreover, it was noticed that `$\Lambda _n^{(\alpha )}$ does not decrease monotonically for increasing negative values of $\alpha$', indicating that $\Lambda _n^{(\alpha )}$ is not minimal for Chebyshev polynomials but rather attains its smallest value for Gegenbauer polynomials associated with some negative values of $\alpha$; cf. \cite[Figures 1 and 3]{elgindy2018high1}. It was roughly estimated in many earlier works through theoretical and numerical evidences that reasonably good $\alpha$-values for polynomial interpolation in basis form typically belong to the \textit{``Gegenbauer collocation interval of choice,''} $I_{\varepsilon ,r}^G$, defined by $I_{\varepsilon ,r}^G = \left\{ {\alpha | - 1/2 + \delta \le \alpha  \le r,0 < \delta  \ll 1,r \in [1,2]} \right\}$, for reasons pertaining to the stability and accuracy of numerical schemes employing Gegenbauer polynomials as basis polynomials; cf. \cite{elgindy2013optimal,elgindy2017high,elgindy2018high1,elgindy2019high,elgindy2020distributed} and the Refs. therein.

On the other hand, it was discovered in a number of works that Lebesgue constants for rational interpolation at equally-spaced nodes are much smaller than those associated with classical polynomial interpolation; cf. \cite{berrut1994linear,berrut1997lebesgue,carnicer2010weighted,wang2010rational,bos2013bounding}. Moreover, the Lebesgue constant of Berrut's rational interpolation \cite{berrut1988rational} at equidistant nodes is smaller than the Lebesgue constant for polynomial interpolation at Chebyshev nodes; cf. \cite{bos2011lebesgue}. Figure \ref{NceLebesgueRat} shows the surface of Lebesgue constant for GGR-based rational interpolation characterized by Eqs. \eqref{eq:new3} and \eqref{eq:new1} with barycentric weights obtained through the second switching formulas. The surface is constructed through least-squares approximation and is shown together with some of its cross-sections with the vertical planes $\alpha = -0.49,-0.2,-0.1,0,0.5,1,1.5$. A number of remarks deserve to be made at this point: (i) First, notice how small the Lebesgue constant values are for GGR-based rational interpolation compared with its values for the FGGR-based polynomial interpolation in basis form; cf. \cite[Figures 1]{elgindy2018high1}, (ii) $\Lambda _n^{(\alpha )}$ does not blow up as $\alpha \to -0.5$, but rather remains bounded, (iii) the associated Lebesgue constant grows logarithmically in the number of collocation nodes, (iv) it is interesting also to see how the Lebesgue constant drops monotonically as $\alpha$ increases while holding $n$ fixed. This suggests that Legendre polynomials are generally more suited for GGR-based SR-interpolations than Chebyshev polynomials. In fact, we can also observe from Figure \ref{NceLebesgueRat} that Gegenbauer polynomials with increasing $\alpha$-values are associated with smaller Lebesgue constant values. This suggests that Gegenbauer polynomials with $\alpha > 1/2$ may also be more plausible to employ in SR-interpolation for short/medium range of $n$-values. However, the work of \citet{elgindy2013optimal} manifests that Gegenbauer quadratures `may become sensitive to round-off errors for positive and large values of the parameter $\alpha$ due to the narrowing effect of the Gegenbauer weight function,' which drives the quadratures to become more extrapolatory with greater uncertainty in integral approximations; thus, the collocation is subject to a higher risk of producing meaningless results. In other words, ${\left\| {\delta {y_{c,n}}} \right\|_{[ - 1,1)}}$ may become large and the collocation error grows accordingly. It was observed also in \cite{elgindy2013optimal} that the weight function cease to exist near the boundaries $\tau = \pm 1$, and its support is nonzero only on a subinterval centered at $\tau = 0$ for increasing values of $\alpha > 1$. If we refer to GGR-based collocations employing SR-interpolations by the \textit{``SR-collocations,''} then this analysis suggests that, for a relatively large collocation mesh size, SR-collocations is expected to produce higher-order approximations for nonnegative $\alpha$-values with apparently optimal $\alpha$-values within/near the \textit{``Gegenbauer SR-collocation interval of choice (SRCIC)''} $\Upsilon_{\amin,\amax}^G = [\amin, \amax]: \amin \approx 1/2, 1/2 \le \amax \le 1$; in addition, Gegenbauer polynomials with positive and large $\alpha$-values are generally not apt for SR-interpolation/collocation. For small mesh sizes, however, there is no rule-of-thumb as to how should we select $\alpha$, since all Lebesgue constant curves converge to the same limit as $n \to 1$. The analysis in this section assume that the problem under study is well-conditioned. For sensitive problems, the interval of choice $\Upsilon_{\amin,\amax}^G$ may change depending on the sources of sensitivity. In Section \ref{sec:errb}, we shall expound that proper collocations of the FHOCI using any of the maps \eqref{eq:num1}-\eqref{garg2011a} entails shifting the right boundary, $\amax$, of $\Upsilon_{\amin,\amax}^G$ rightward as the mesh size grows large to reduce the divergence rate of the collocated solutions from the exact solutions.

\begin{figure}[h]
\centering
\includegraphics[scale=0.5]{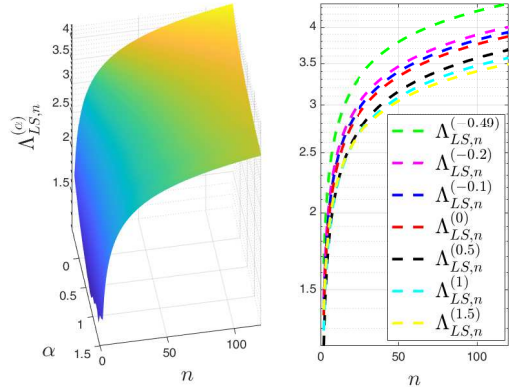}
\caption{The left plot shows the surface of Lebesgue constant for GGR-based SR-interpolation on the discrete rectangular domain $\{(n,\alpha): n = 2(1)120, \alpha = [-0.499,-0.49,-0.4(0.1)1.5]\}$. The surface is constructed through least-squares approximation using curves of the form $c_1 + c_2 \ln n$, for some real parameters $c_1$ and $c_2$ with logarithmic scale on the $z$ axis. The right plot shows the cross-sections of the surface with the vertical planes $\alpha = -0.49,-0.2,-0.1,0,0.5,1,1.5$.}
\label{NceLebesgueRat}
\end{figure}

From another perspective, it is interesting to mention that the Lebesgue constant is also a useful instrument in observing how the collocated solutions change as the input data are varied. By closely following the convention in \cite{elgindy2018high1}, suppose that $\tilde y_{i}, i = 0, \ldots ,n,$ and $\tilde y_{c,n}(\tau)$ are the perturbed solution values due to round-off or input data errors, and the perturbed collocated solution, respectively. Moreover, assume that $\tilde p_{n}y(\tau)$ is the Lagrange interpolating polynomial of degree at most $n$ that interpolate the data set $\left\{ {\left( {\tau_{i}, \tilde y_{i}} \right)} \right\}_{i = 0}^n$. Then we have
\begin{subequations}
\begin{gather}
{\left\| {y_{c,n} - \tilde y_{c,n}} \right\|_{[-1,1)} } = {\left\| {{p_{n}}y - \delta y_{c,n} - {{\tilde p}_{n}}y + \delta \tilde y_{c,n}} \right\|_{[-1,1)} } \le {\left\| {{p_{n}}y - {{\tilde p}_{n}}y} \right\|_{[-1,1)} } + {\left\| {\delta y_{c,n} - \delta \tilde y_{c,n}} \right\|_{[-1,1)} }\nonumber\\
= \mathop {{\text{max}}}\limits_{-1 \le \tau < 1} \left| {\sum\limits_{i = 0}^n {\left( {y_i - \tilde y_i} \right) \C{L}_{n,i}(\tau)} } \right| + {\left\| {\delta y_{c,n} - \delta \tilde y_{c,n}} \right\|_{[-1,1)} } \le \Lambda _n^{(\alpha )} \mathop {{\text{max}}}\limits_{0 \le i \le n} \left| {y_i - \tilde y_i} \right| + {\left\| {\delta y_{c,n} - \delta \tilde y_{c,n}} \right\|_{[-1,1)} },\label{eq:hi1}\\
\le \Lambda _n^{(\alpha )} {\left\| {y - \tilde y} \right\|_{[ - 1,1)}} + {\left\| {\delta y_{c,n} - \delta \tilde y_{c,n}} \right\|_{[-1,1)} },\label{eq:hi2}
\end{gather}
\end{subequations}
where $\delta \tilde y_{c,n}(\tau)$ is the difference between the perturbed Lagrange interpolating polynomial and the perturbed collocated solution. When ${\left\| {\delta y_{c,n} - \delta \tilde y_{c,n}} \right\|_{[-1,1)} }$ is relatively small, $\Lambda _n^{(\alpha )}$ nearly quantifies how much larger the perturbation error of the collocated solution, ${\left\| {y_{c,n} - \tilde y_{c,n}} \right\|_{[-1,1)} }$, is compared to the maximum possible perturbation error of the solution at the collocation points, $\mathop {{\text{max}}}\limits_{0 \le i \le n} \left| {y_i - \tilde y_i} \right|$, or to the maximum solution perturbation error, ${\left\| {y - \tilde y} \right\|_{[ - 1,1)}}$, in the worst case. 

\subsection{The Barycentric GRIM and Quadratures}
\label{sec:TGIM1}
Consider a real-valued function $f$ defined on the interval $[-1, 1]$ and its GGR-Based SR-interpolation given by Eqs. \eqref{eq:new3}, \eqref{eq:new1}, and the second switching formulas of the barycentric weights. Following the work of \citet{elgindy2017high1}, the formulas needed to construct the nonzero rows of the barycentric GRIM can be derived by integrating Eq. \eqref{eq:new3} on the successive intervals $\left[-1,\tau_{j}\right], j = 1, \ldots, n$, to obtain
\begin{equation}
\int_{-1}^{\tau_{j}}P_{n}f(\tau)\,d\tau = \sum_{i=0}^{n} f_{i}\int_{-1}^{\tau_{j}} \C{L}_{n,i}(\tau)\,d\tau,\quad  j=1,\ldots,n,
\label{n}
\end{equation}
where $f_i = f(\tau_i)\,\forall i$. With the change of variable
\begin{equation}
\tau = \frac{1}{2} \left[\left(\tau_{j}+1\right)t+\tau_{j}-1\right],
\label{eq:GIM2}
\end{equation}
we can rewrite Eq. \eqref{n} as 
\begin{equation}\label{eq:GIM3}
\int_{-1}^{\tau_{j}}P_{n}f(\tau)\,d\tau=\frac{\tau_{j}+1}{2}\sum_{i=0}^{n} f_{i}\int_{-1}^{1} \C{L}_{n,i}\left(t;-1,\tau_{j}\right) dt, \quad j=1,2,\ldots,n,
\end{equation}
where 
\[\C{L}_{n,i}\left(t;-1,\tau_{j}\right) = \C{L}_{n,i}\left(\frac{\left(\tau_{j}+1\right)t+\tau_{j}-1}{2}\right),\quad \forall i, j.\]
Since the polynomials $\C{L}_{n,i}\left(t;-1,\tau_{j}\right), i = 0, \ldots, n$, are of degree $n$, the integrals $\int_{-1}^{1} \C{L}_{n,i}\left(t;-1,\tau_{j}\right) dt$ can be computed exactly using an $N=\left\lceil(n+1)/2\right\rceil$-point LG quadrature, where $\left\lceil . \right\rceil$ denotes the ceiling function. Let $\lbrace \bar{\tau}_{k},\bar{\varpi}_{k}\rbrace_{k=0}^{N}$ be the set of LG quadrature nodes and weights, respectively, where
\begin{equation}
\bar{\varpi}_{k}=\frac{2}{\left(1-\bar{\tau}_{k}^2\right)\left(L'_{N+1}\left(\bar{\tau}_{k}\right)\right)^{2}}, \quad k=0,\ldots,N,
\label{eq:GIM4}
\end{equation}
and $L'_{N+1}$ denotes the derivative of the $(N+1)$st-degree Legendre polynomial $L_{N+1}$. Then
\begin{equation}
\int_{-1}^{1} \C{L}_{n,i}\left(t;-1,\tau_{j}\right)\,dt=\sum_{k=0}^{N} \bar{\varpi}_{k} \C{L}_{n,i}\left(\bar{\tau}_{k};-1,\tau_{j}\right).
\label{eq:GIM5}
\end{equation}
Hence, Eqs. \eqref{eq:GIM3} and \eqref{eq:GIM5} yield
\begin{equation}
\int_{ - 1}^{{\tau _j}} {f(\tau)\,d\tau} \approx \sum_{i=0}^{n} q_{j,i}  f_{i}=\F{Q} \bmf,\quad j = 0, \ldots, n,
\label{eq:GIM6}
\end{equation}
where $\bmf=\left[f_{0},f_{1},\ldots,f_{n}\right]^{t}$, and $q_{j,i},i,j=0,\ldots,n$, are the elements of the first-order barycentric GRIM $\F{Q}$ given by
\begin{equation}\label{eq:GIM7}
{q_{j,i}}=
\begin{cases}
0, \quad j = 0,\,i=0,\ldots,n,\\
\displaystyle{\frac{\tau_{j}+1}{2}\sum_{k=0}^{N} \bar{\varpi}_{k} \C{L}_{n,i}\left(\bar{\tau}_{k};-1,\tau_{j}\right)},\quad j=1,2,\ldots,n,\, i=0,\ldots,n. 
\end{cases}
\end{equation}
We denote the $j$th row of $\F{Q}$ by $\F{Q}_{j}\,\forall j$. The derivation of the formulas required to construct the GGR-based differentiation matrix (GRDM) in barycentric form is described in \ref{sec:TDIM}.

\subsection{IPS Rational Collocation of the FHOC at the GGR Points}
\label{subsec:PSRC2404}
Let $\bmtau_{n} = \left[\tau_{0}, \tau_{1}, \ldots, \tau_{n}\right]^t, \tbmx_i = \tbmx\left(\tau_{i}\right), \tbmu_i = \tbmu\left(\tau_{i}\right), \bmf_i = \bmf(\tbmx_i,\tbmu_i), f_{i,j} = f_{i}(\tbmx_j,\tbmu_j)\,\forall i,j$, and
\[
\bm{\C{J}}_{n} = \left[ \int_{-1}^{\tau_{0}}  T'(\tau) \bmf(\tbmx(\tau),\tbmu(\tau))\,d\tau,\ldots,\int_{-1}^{\tau_{n}} T'(\tau)  \bmf(\tbmx(\tau),\tbmu(\tau))\,d\tau\right].
\]
Then collocating Eq. \eqref{eq:8} at the GGR nodes yields   
\begin{equation}
\tbmx\left(\bmtau_{n}\right)= \text{vec}\left(\bm{\C{J}}_{n}\right) + \bmx_{0} \otimes \bmone_{n+1} \approx \hF{M} + \bmx_{0} \otimes \bmone_{n+1},
\label{eq:9mf}
\end{equation}
where 
\[\hF{M} = \text{vec}\left(\F{M}\right):\quad \F{M} = \F{Q} \left(\F{F}\circ \left[ T'\left(\bmtau_{n}\right) \otimes \bmone_{n_{x}}^{t}\right]\right),\]
$\F{F}=\left[\F{F}_{1,n},\ldots,\F{F}_{n_{x},n}\right], \F{F}_{i,n}=\left[f_{i,0},\ldots,f_{i,n}\right]^t\,\forall i, \bmone_{n}$ is the all ones column vector of size $n, [.,.], \text{``vec''}, \circ$, and $\otimes$ denote the horizontal matrix concatenation, the vectorization of a matrix, the Hadamard product, and the Kronecker product, respectively. Let $\tau_{n + 1} = 1$ and define ${{\F{Q}}_{n + 1}} = \left( {{q_{n+1,i}}} \right)_{0 \le i \le n}: {q_{n+1, i}} = \sum_{k=0}^{N} \bar{\varpi}_{k} \C{L}_{n,i}\left(\bar{\tau}_{k}\right)\,\forall i$, then, the discrete cost functional $J$ can be approximated numerically using the LG quadrature as follows:
\begin{equation}
J \approx J_n = \F{Q}_{n+1}\displaystyle{\left(T'\left(\bmtau_{n}\right) \circ \bm{g}\right)},
\label{eq:13}
\end{equation} 
where $\bm{g}=\left[g_{0},\ldots,g_{n}\right]^t: g_{i}=g\left(\tbmx_i,\tbmu_i\right)\,\forall i$. Hence, the FHOCI \eqref{eq:5}, \eqref{eq:7}, and \eqref{eq:8} is now converted into a nonlinear programming problem (NLP) in which the goal is to minimize the discrete cost functional \eqref{eq:13} subject to the nonlinear system of equations \eqref{eq:9mf}. If we define the image of the collocation points set $\MBS_n$ under the transformation $T$ by $\MBS_n^T = \{t_k: t_{k} = T\left(\tau_{k}\right),\; k = 0, \ldots, n\}$, and denote $\bmx(t_i)$ and $\bmu(t_i)$ by $\bmx_i$ and $\bmu_i\,\forall i$, respectively, then the NLP can be solved using well-developed optimization software for the unknowns $\tbmx_i = \bmx_i,\, i = 1, \ldots, n$, and $\tbmu_j = \bmu_j,\, j = 0,\ldots, n$. The approximate optimal state and control variables can then be calculated at any point $t \in [0, \infty)$ through the PS expansions
\begin{subequations}
\begin{align}
\bmx(t) &= \bmx\left(\bmt_n^t\right) \boldsymbol{\C{L}}_{n}(t),\quad \text{and}\label{eq:17}\\
\bmu(t) &= \bmu\left(\bmt_n^t\right) \boldsymbol{\C{L}}_{n}(t),\label{eq:18}
\end{align}
\end{subequations}
where $\bmt_{n} = \left[t_{0}, t_{1}, \ldots, t_{n}\right]^{t}$ and $\boldsymbol{\C{L}}_{n}(t) = \left[\C{L}_{n,0}(t),\C{L}_{n,1}(t),\ldots,\C{L}_{n,n}(t)\right]^{t}.$
In the special case, when $T = T_{1,L}^{(\alpha)}(\tau)$, one can easily show that the NLP can be written as follows:
\begin{subequations}
\begin{equation}
\min J_n = 2 L \F{Q}_{n+1}\displaystyle{\left[ \bm{g} \oslash \left(\bmone_{n+1}-\bmtau_{n}\right)_{2}   \right]}
\label{eq:T1}
\end{equation} 
subject to
\begin{equation}
\tbmx\left(\bmtau_{n}\right) \approx   2 L\,\hF{M}_{1} + \bmx_{0} \otimes \bmone_{n+1},
\label{eq:T1c}
\end{equation}
\end{subequations}
where $\oslash$ denotes the Hadamard division, $(\bmv)_{r}  = \underbrace {\bmv \circ \bmv \circ \ldots \circ \bmv}_{r\text{ times}}$, for any vector $\bmv$, and
\begin{equation}
\hF{M}_{1} = \text{vec}\left(\F{M}_{1}\right):\quad \F{M}_{1} = \F{Q}\left(\F{F}  \oslash \left[ \left(\bmone_{n+1}-\bmtau_{n}\right)_{2}  \otimes \bmone_{n_{x}}^{t}\right]\right).
\label{eq:NOTDIV1}
\end{equation}
Furthermore, when $T = T_{2,L}^{(\alpha)}(\tau)$, the NLP can be formulated as follows:
\begin{subequations}
\begin{equation}
\min J_n = L \F{Q}_{n+1}\displaystyle{\left[ \bm{g} \oslash \left(\bmone_{n+1}-\bmtau_{n}\right)   \right]}
\label{eq:T2}
\end{equation} 
subject to
\begin{equation}
\tbmx\left(\bmtau_{n}\right) \approx   L\,\hF{M}_{2} + \bmx_{0} \otimes \bmone_{n+1},
\label{eq:T2c}
\end{equation}
\end{subequations}
where 
\begin{equation}
\hF{M}_{2} = \text{vec}\left(\F{M}_{2}\right):\quad \F{M}_{2} = \F{Q}\left(\F{F} \oslash \left[ \left(\bmone_{n+1}-\bmtau_{n}\right) \otimes \bmone_{n_{x}}^{t}\right]\right).
\label{eq:NOTDIV2}
\end{equation}
We refer to the NLPs described by Eqs. \eqref{eq:T1}, \eqref{eq:T1c}, \eqref{eq:T2}, and \eqref{eq:T2c} by NLP1 and NLP2, respectively. We also refer to the present collocation method by the \textit{``GGR-IPS''} method; the acronyms \textit{``GGR-IPS1''} and \textit{``GGR-IPS2''} stand for the GGR-IPS method performed using the parametric maps $T_{1,L}^{(\alpha)}$ and $T_{2,L}^{(\alpha)}$, respectively, while \textit{``GGR-IPS12''} stands for the GGR-IPS method performed using either maps $T_{1,L}^{(\alpha)}$ and $T_{2,L}^{(\alpha)}$.

\section{Necessary Conditions of Optimality for the NLP}
\label{sec:NCOOFTNP}
Consider the NLP described by Eqs. \eqref{eq:9mf} and \eqref{eq:13}. The Lagrangian associated with the NLP is defined by
\begin{equation}
\MF{L} = \F{Q}_{n+1} \left(\bmT'_n \circ \bm{g}\right)+\bm{r}^{t}\left(\hF{M} + \bmx_{0} \otimes \bmone_{n+1} - \tbmx\left(\bmtau_{n}\right)\right),
\label{eq:NLPnec1}
\end{equation}
where $\bmT'_n = T'\left(\bmtau_{n}\right)$, and $\bm{r}=\left[r_{10},\ldots,r_{1n},\ldots,r_{n_{x}0},\ldots,r_{n_{x}n}\right]^{t}$ is the vector of Lagrange multipliers. Therefore, the KKT necessary conditions of optimality are given by
\begin{align}
\mathop {\nabla}\limits_{\tilde {\bmx}} \MF{L} &= \F{Q}_{n+1} \left[\left(\bmT'_n \otimes {\bmone}_{n_x n}^t\right) \circ \mathop {\nabla}\limits_{\tilde {\bmx}} \bm{g}\right] + \bm{r}^t \left[\left({\F{I}}_{n_x} \otimes \left[{\F{Q}} \circ \left({\bmT'_n}^t \otimes {\bmone}_{n+1}\right) \right]\right) \mathop {\nabla}\limits_{\tilde {\bmx}} \hF{F} - \F{I}_{n_x} \otimes \F{E}\right] = \bmzer,\label{eq:NLPnec2}\\
\mathop {\nabla}\limits_{\tilde {\bmu}} \MF{L} &= \F{Q}_{n+1}\left[\left( \bmT'_n \otimes \bmone_{n_{u}(n+1)}^{t} \right) \circ \mathop {\nabla}\limits_{\tilde {\bmu}} \bm{g}\right]+ \bm{r}^{t}  \left[\left({\F{I}}_{n_x} \otimes \left[{\F{Q}} \circ \left({\bmT'_n}^t \otimes {\bmone}_{n+1}\right) \right]\right) \mathop {\nabla}\limits_{\tilde {\bmu}} \hF{F}\right] = \bmzer,\label{eq:NLPnec3}
\end{align}
where the operators $\displaystyle{ \mathop {\nabla}\limits_{\tilde {\bmx}} =\left[ \frac{\partial }{\partial \tilde{x}_{11}}  \ldots \frac{\partial }{\partial \tilde{x}_{1n}} \ldots  \frac{\partial }{\partial \tilde{x}_{n_{x}1}} \ldots \frac{\partial }{\partial \tilde{x}_{n_{x}n}} \right], \mathop {\nabla}\limits_{\tilde {\bmu}} =\left[ \frac{\partial }{\partial \tilde{u}_{10}}  \ldots \frac{\partial }{\partial \tilde{u}_{1n}} \ldots  \frac{\partial }{\partial \tilde{u}_{n_{u}0}} \ldots \frac{\partial }{\partial \tilde{u}_{n_{u}n}} \right]}$, $\F{I}_{n}$ is the identity matrix of size $n, \hF{F} = \text{vec}\left(\F{F}\right), \F{E} = [\bm{\mathit{0}}_{n}^{t};\F{I}_{n}]$, and $[.;.]$ denotes the vertical matrix concatenation.

\section{Error and Convergence Analyses}
\label{sec:errb}
In this section we derive the truncation error bounds for Eqs. \eqref{eq:9mf} and \eqref{eq:13} and their convergence rates. 

\begin{thm} 
\label{therr0}
Let $f \in C^{n+1}[-1, 1)$ be approximated by a Gegenbauer interpolant $P_{n}f$ based upon the GGR points set $\MBS_n$ as defined by Eq. \eqref{eq:new3}. Then there exist some numbers $\xi_{i} \in (-1,1), i=0,\ldots,n,$ such that the truncation error of Approximation \eqref{eq:GIM6} is given by 
\begin{equation}
{}_fE_{n}\left(\tau_{i},\xi_{i}\right)=\frac{f^{(n+1)}\left(\xi_{i}\right)}{(n+1)!K_{n+1}^{(\alpha)}  } \int_{-1}^{\tau_{i}} \C{G}_{n+1}^{(\alpha)}(\tau)\, d\tau \quad \forall i,
\label{eqerrnew}
\end{equation}
where $K_{n}^{(\alpha)} = 2^{n-1} \displaystyle{\frac{\Gamma(n+\alpha) \Gamma(2 \alpha+1)}{\Gamma(n+2 \alpha)\Gamma(\alpha+1)}},\, n = 0, 1, \ldots$. 
\end{thm}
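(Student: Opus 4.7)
The plan is to reduce the quadrature error to the integral of the pointwise interpolation error, then apply the classical Cauchy remainder for polynomial interpolation, then factor the node polynomial through $\C{G}_{n+1}^{(\alpha)}$, and finally invoke a mean-value argument to isolate $f^{(n+1)}$ at a single point $\xi_i$.

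First, I would show that $\F{Q}_i\bmf = \int_{-1}^{\tau_i} P_n f(\tau)\,d\tau$ exactly. In the construction of the entries $q_{j,i}$, the $(N+1)$-point LG quadrature with $N = \lceil (n+1)/2 \rceil$ is exact for polynomials of degree at most $2N+1 \geq n+2$, so it integrates the degree-$n$ polynomial $\C{L}_{n,i}(t;-1,\tau_j)$ exactly. Consequently,
\begin{equation*}
{}_f E_n(\tau_i,\xi_i) = \int_{-1}^{\tau_i} f(\tau)\,d\tau - \sum_{k=0}^n q_{i,k}\,f_k = \int_{-1}^{\tau_i}\bigl[f(\tau) - P_n f(\tau)\bigr]\,d\tau,
\end{equation*}
so the quadrature error coincides with the integral of the interpolation error.

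Next, I would invoke the classical Cauchy remainder formula: since $f \in C^{n+1}[-1,1)$, for each $\tau$ in the interval there exists $\eta(\tau) \in (-1,1)$ with
\begin{equation*}
f(\tau) - P_n f(\tau) = \frac{f^{(n+1)}\bigl(\eta(\tau)\bigr)}{(n+1)!}\prod_{k=0}^n (\tau - \tau_k).
\end{equation*}
Since the GGR nodes $\tau_0,\ldots,\tau_n$ are precisely the simple roots of $\C{G}_{n+1}^{(\alpha)}$, and since $G_n^{(\alpha)}$ has degree strictly less than $n+1$, the leading coefficient of $\C{G}_{n+1}^{(\alpha)} = G_n^{(\alpha)} + G_{n+1}^{(\alpha)}$ equals that of $G_{n+1}^{(\alpha)}$, namely $K_{n+1}^{(\alpha)}$. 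Hence $\prod_{k=0}^n(\tau-\tau_k) = \C{G}_{n+1}^{(\alpha)}(\tau)/K_{n+1}^{(\alpha)}$, and integrating from $-1$ to $\tau_i$ produces
\begin{equation*}
\int_{-1}^{\tau_i}\bigl[f(\tau) - P_n f(\tau)\bigr]\,d\tau = \frac{1}{(n+1)!\,K_{n+1}^{(\alpha)}}\int_{-1}^{\tau_i} f^{(n+1)}\bigl(\eta(\tau)\bigr)\,\C{G}_{n+1}^{(\alpha)}(\tau)\,d\tau.
\end{equation*}

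Finally, a mean-value argument applied to the continuous function $f^{(n+1)}\circ \eta$ would pull $f^{(n+1)}$ out at a single point $\xi_i \in (-1,1)$, yielding Formula \eqref{eqerrnew}. The hard part will be precisely this last step: $\C{G}_{n+1}^{(\alpha)}$ generally changes sign on $[-1,\tau_i]$ because the interior GGR nodes $\tau_1,\ldots,\tau_{i-1}$ lie inside that interval, so the standard first mean value theorem for integrals does not apply verbatim. A cleaner route is to recast the Cauchy remainder through divided differences, $f(\tau)-P_n f(\tau) = f[\tau_0,\ldots,\tau_n,\tau]\prod_k(\tau-\tau_k)$, exploit the continuity in $\tau$ of this divided difference together with its representation $f[\tau_0,\ldots,\tau_n,\tau] = f^{(n+1)}\bigl(\xi(\tau)\bigr)/(n+1)!$, and then produce $\xi_i$ by an intermediate-value argument applied to $f^{(n+1)}$ on the range of $\xi(\tau)$. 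This is legitimate provided the ratio of the two integrals lies in that range, a condition that can be underwritten by a Peano-kernel analysis of the linear functional $L(f) := \int_{-1}^{\tau_i}[f-P_n f]\,d\tau$, which vanishes on $\Pi_n$ by construction and for which $K_{n+1}^{(\alpha)}\,L(x \mapsto x^{n+1})/(n+1)! = \int_{-1}^{\tau_i} \C{G}_{n+1}^{(\alpha)}(\tau)\,d\tau$; this gives the constant on the right-hand side of \eqref{eqerrnew} and ties the whole argument together.
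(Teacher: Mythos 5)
Your proposal follows essentially the same route as the paper's own proof: the paper likewise writes $f = P_nf + {}_fE_n$, inserts the Cauchy remainder ${}_fE_n(\tau,\xi)=\frac{f^{(n+1)}(\xi)}{(n+1)!}\prod_{k=0}^n(\tau-\tau_k)$, uses $\C{G}_{n+1}^{(\alpha)}(\tau) = K_{n+1}^{(\alpha)}\prod_{k=0}^n(\tau-\tau_k)$, and integrates over $[-1,\tau_i]$. One point you add explicitly, and usefully, is that $\sum_k q_{i,k}f_k = \int_{-1}^{\tau_i}P_nf(\tau)\,d\tau$ holds exactly because the $(N+1)$-point LG rule with $N=\lceil(n+1)/2\rceil$ has exactness degree $2N+1\ge n+2$; the paper leaves this implicit, so the "quadrature error equals integrated interpolation error" identification is a genuine (correct) clarification rather than a deviation.

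The substantive difference is the final mean-value extraction. The paper integrates the remainder with the mean-value point $\xi$ held fixed, silently ignoring its dependence on $\tau$; you correctly observe that the first mean value theorem for integrals is not available verbatim because $\C{G}_{n+1}^{(\alpha)}$ changes sign on $[-1,\tau_i]$ whenever interior nodes lie to the left of $\tau_i$. However, your proposed repair does not actually close this gap: the divided-difference/intermediate-value route delivers a single $\xi_i$ only if the ratio $L(f)\big/\bigl[L(x\mapsto x^{n+1})/(n+1)!\bigr]$ lies in the range of $f^{(n+1)}$, and the assertion that this "can be underwritten by a Peano-kernel analysis" is precisely the missing ingredient — what is needed is one-signedness of the Peano kernel of $L(f)=\int_{-1}^{\tau_i}[f-P_nf]\,d\tau$, which is not automatic for an intermediate upper limit $\tau_i$ and is verified neither by you nor by the paper. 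So your write-up is, if anything, more candid than the published two-line proof, but the last step remains flagged rather than proved. A small normalization slip: since $x^{n+1}-P_n(x^{n+1})=\prod_{k=0}^n(x-\tau_k)$ exactly, one has $K_{n+1}^{(\alpha)}\,L(x\mapsto x^{n+1}) = \int_{-1}^{\tau_i}\C{G}_{n+1}^{(\alpha)}(\tau)\,d\tau$ with no factor $(n+1)!$; that factor belongs only to the mean-value identity $L(f)=f^{(n+1)}(\xi_i)\,L(x\mapsto x^{n+1})/(n+1)!$, which is what reproduces Formula \eqref{eqerrnew}.
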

\begin{proof}
By definition, we can write
\begin{equation}\label{eqerr:2}
f(\tau) = \sum_{k=0}^{n} f_{k}\, \C{L}_{n,k}(\tau) + {}_fE_{n}\left(\tau,\xi \right),\quad \forall \tau \in [-1,1),
\end{equation}
for some $\xi \in (-1, 1)$, where ${}_fE_n$ is the interpolation truncation error at the GGR points given by
\begin{equation}\label{eq:29}
{}_fE_{n}\left(\tau,\xi \right)=\frac{f^{(n+1)}\left(\xi\right)}{(n+1)!}\prod_{k=0}^{n}\left(\tau-\tau_{k}\right).
\end{equation}
The proof is established by realizing that $\C{G}_{n+1}^{(\alpha)}(\tau) = K_{n+1}^{(\alpha)}\prod_{k=0}^{n}\left(\tau-\tau_{k}\right)$, and integrating Eq. \eqref{eqerr:2} on $\left[-1,\tau_{i} \right)\,\forall i$. 
\end{proof}
The following result is a direct corollary of Theorem \ref{therr0} by letting $\eta: [-1, 1) \to \MBR$ and $\psi_{k}: [-1, 1) \to \MBR$ such that $\eta(\tau)=T'(\tau) g(\tbmx(\tau),\tbmu(\tau))$ and $\psi_{k}(\tau)=T'(\tau) f_{k}\left( \tbmx(\tau),\tbmu(\tau)\right)$, for each $k = 1,\ldots, n_x$.

\begin{cor} 
\label{therr00}
The truncation errors of Approximation \eqref{eq:13} and each approximation of System \eqref{eq:9mf} for each equation 
\begin{equation}
{{\tilde x}_k}({\tau _j}) = \int_{ - 1}^{{\tau _j}} {{\psi _k}(\tau )\,d\tau }  + {x_{k,0}},
\label{eq:cor}
\end{equation}
of the Integral Constraints System \eqref{eq:8} at each point $\tau_j \in \MBS_n$ are given by
\begin{equation}
{}_\eta {E_n}\left(\zeta \right)=\frac{\eta^{(n+1)}(\zeta)}{(n+1)!K_{n+1}} \int_{-1}^{1}  \C{G}_{n+1}^{(\alpha)}(\tau) d\tau,   
\label{eq:errbounds4}
\end{equation} 
and
\begin{equation}
{}_{\psi_k}E_{n}\left(\tau_{j},\xi_{j}\right)=\frac{\psi^{(n+1)}_{k}\left(\xi_{j}\right)}{(n+1)!K_{n+1}}  \int_{-1}^{\tau_{j}} \C{G}_{n+1}^{(\alpha)}(\tau)\,d\tau,\quad k=1,\ldots,n_{x},\quad j=0,\ldots,n,   
\label{eq:errbounds1}
\end{equation} 
respectively, where $\zeta, \xi_{j} \in(-1,1)\,\forall j$.
\end{cor}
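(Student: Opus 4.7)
The plan is to reduce both estimates to direct applications of Theorem~\ref{therr0} by identifying each approximation with the GRIM-based integration operator of \eqref{eq:GIM6} applied to $\eta$ or $\psi_{k}$.

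For the constraint equation \eqref{eq:cor}, the GGR-IPS approximation to $\int_{-1}^{\tau_j}\psi_k(\tau)\,d\tau$ at any node $\tau_j\in\MBS_n$ is, by the construction of the barycentric GRIM in \eqref{eq:GIM7}, exactly the $j$th row of $\F{Q}$ applied to the nodal-value vector $[\psi_k(\tau_0),\ldots,\psi_k(\tau_n)]^t$. This is precisely the approximation whose truncation error is given by \eqref{eqerrnew}. Substituting $f\leftarrow\psi_k$ and $\tau_i\leftarrow\tau_j$ in \eqref{eqerrnew} therefore yields \eqref{eq:errbounds1} immediately for every $k=1,\ldots,n_x$ and every $j=0,\ldots,n$.

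For the cost-functional approximation \eqref{eq:13}, one extra reduction step is needed. First I would show that $J_n$ is nothing but the exact integral of the Gegenbauer interpolant $P_n\eta$ over $[-1,1]$: expanding $\F{Q}_{n+1}$ via \eqref{eq:GIM7} with $\tau_{n+1}=1$ gives
\begin{equation*}
J_n \;=\; \sum_{i=0}^{n}\eta(\tau_i)\sum_{k=0}^{N}\bar{\varpi}_k\,\C{L}_{n,i}(\bar{\tau}_k) \;=\; \sum_{k=0}^{N}\bar{\varpi}_k\,(P_n\eta)(\bar{\tau}_k),
\end{equation*}
and since the $(N+1)$-node LG quadrature has degree of exactness $2N+1\ge n+2$ for $N=\lceil (n+1)/2\rceil$, it integrates the degree-$n$ polynomial $P_n\eta$ exactly, so $J_n=\int_{-1}^{1}P_n\eta(\tau)\,d\tau$. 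Hence $J-J_n=\int_{-1}^{1}[\eta(\tau)-P_n\eta(\tau)]\,d\tau$, which is just the quantity treated in Theorem~\ref{therr0} with the upper limit of integration taken to be $\tau_{n+1}=1$. Substituting the pointwise interpolation error \eqref{eq:29} and using the identity $\C{G}_{n+1}^{(\alpha)}(\tau)=K_{n+1}^{(\alpha)}\prod_{k=0}^{n}(\tau-\tau_k)$, a mean-value extraction of $\eta^{(n+1)}$ from the resulting integral produces the single point $\zeta\in(-1,1)$ and the formula \eqref{eq:errbounds4}.

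The main subtlety lies in the mean-value extraction for the cost functional, since $\C{G}_{n+1}^{(\alpha)}$ generally changes sign on $[-1,1]$ and therefore the classical first mean value theorem for integrals does not apply verbatim. I would handle this using exactly the same device invoked in the proof of Theorem~\ref{therr0}, whose integration range $[-1,\tau_i]$ already contains interior zeros of $\C{G}_{n+1}^{(\alpha)}$ for most $\tau_i\in\MBS_n$ and therefore suffers the same sign-change phenomenon; once that extraction is accepted there, it applies verbatim here with the upper limit set to $1$, completing both parts of the statement.
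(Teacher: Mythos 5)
Your proposal is correct and follows essentially the same route as the paper, which simply presents the corollary as a direct consequence of Theorem~\ref{therr0} under the substitutions $f\leftarrow\psi_k$ (upper limit $\tau_j$) and $f\leftarrow\eta$ (upper limit $1$). The extra details you supply --- that $\F{Q}_{n+1}$ integrates the degree-$n$ interpolant $P_n\eta$ exactly by the LG quadrature's degree of exactness, and the caveat about extracting $\eta^{(n+1)}(\zeta)$ when $\C{G}_{n+1}^{(\alpha)}$ changes sign --- merely make explicit steps the paper leaves implicit (the latter being the same device already used, without further comment, in the paper's proof of Theorem~\ref{therr0}).
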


The following upper bounds on the truncation errors of Approximations \eqref{eq:9mf} and \eqref{eq:13} can be deduced from \citet[Theorem 5.1]{elgindy2018high1}.
\begin{thm}
\label{therr1}
Let $\psi_{k} \in C^{n+1}[-1,1)$ and $\left\| \psi_{k}^{(n + 1)} \right\|_{[-1,1)} = A_{\psi_k, n} \in \MB{R}^{+}\,\forall k$, for some constant $A_{\psi_k, n}$ dependent on $n$ and $k$. Then there exist some positive constants ${B_{1}^{(\alpha )}}$ and ${C_{1}^{(\alpha )}}$ dependent on $\alpha$ and independent of n such that the truncation errors of System \eqref{eq:9mf} at each point $\tau_j \in \MBS_n$ are bounded by the following inequalities:
\begin{equation}\label{eq:errbounds2}
\left|{}_{\psi_k}E_{n}\left(\tau_{j},\xi_{j}\right)\right| \leq \frac{A_{\psi_k, n} \Gamma{(n+2\alpha+1)} \Gamma{(\alpha+1)} \left(\tau_{j}+1\right) }{2^{n} (n+1)! \Gamma{(n+\alpha+1)} \Gamma{(2 \alpha+1)}}{\left\| \C{G}_{n+1}^{(\alpha)} \right\|_{[-1,1)}},\quad k = 1, \ldots, n_x,\quad j = 0, \ldots, n,
\end{equation}
where $\xi_j \in (-1,1)\,\forall j$, and
\begin{equation}\label{eq:errbounds3}
{\left\| \C{G}_{n+1}^{(\alpha)} \right\|_{[-1,1)}}=
\begin{cases}
2, \quad n \geq 0 ,\;\alpha \geq 0, \\
\displaystyle{\frac{\Gamma{\left(\alpha+\frac{1}{2}\right)} \Gamma{\left(\frac{n+1}{2}\right)}}{\sqrt{\pi} \Gamma{\left(\alpha+\frac{n+1}{2}\right)}}\left(1+\sqrt{\frac{n+1}{2\alpha+n+1}}\right)},\quad \frac{n}{2} \in \MB{Z}_0^ +  \wedge \frac{-1}{2}<\alpha < 0,\\
\displaystyle{\frac{{\Gamma \left( {\alpha  + \frac{1}{2}} \right)\left( {\sqrt {n\left( {2\alpha  + n} \right)}  + n} \right)\Gamma \left( {\frac{n}{2}} \right)}}{{2 \sqrt \pi \, \Gamma \left( {\frac{n}{2} + \alpha  + 1} \right)}}}
,\quad \frac{{n + 1}}{2} \in {\MB{Z}^ + } \wedge \frac{-1}{2}<\alpha < 0.
\end{cases}
\end{equation}
Moreover, 
\begin{equation}\label{ineq:asymineqalphanonneg1}
\left| {_{\psi_k} {E_n}\left( {{\tau _j},{\xi _j}} \right)} \right| \le B_1^{(\alpha )}{\left( {\frac{e}{2}} \right)^n}\frac{{1 + {\tau _j}}}{{{n^{n + \frac{3}{2} - \alpha }}}},\quad \forall \alpha  \ge 0,
\end{equation}
and
\begin{equation}\label{ineq:asymineqalphaneg1}
\left| {}_{\psi_k}E_{n}\left(\tau_{j},\xi_{j}\right) \right|\mathop  < \limits_ \sim  {C_{1}^{(\alpha )}}{\left( {\frac{e}{2}} \right)^n}\frac{{1 + \tau_{j}}}{{{n^{n + \frac{3}{2}}}}},\quad \forall - 1/2 < \alpha  < 0, 
\end{equation}
as $n \rightarrow \infty$, where $\mathop  < \limits_ \sim$ means \textit{``less than or asymptotically equal to.''} 
\end{thm}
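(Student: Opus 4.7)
My plan is to start from the exact truncation error representation furnished by Corollary~\ref{therr00}, Eq.~\eqref{eq:errbounds1}, and convert it into the three claimed estimates by bounding each ingredient separately. Taking absolute values, the integral is controlled by
\[\left|\int_{-1}^{\tau_j}\C{G}_{n+1}^{(\alpha)}(\tau)\,d\tau\right| \le (1+\tau_j)\,\left\|\C{G}_{n+1}^{(\alpha)}\right\|_{[-1,1)},\]
and the derivative factor by the hypothesis $\|\psi_k^{(n+1)}\|_{[-1,1)} = A_{\psi_k,n}$. Substituting the explicit form $K_{n+1}^{(\alpha)} = 2^{n}\,\Gamma(n+1+\alpha)\,\Gamma(2\alpha+1)/\left[\Gamma(n+1+2\alpha)\,\Gamma(\alpha+1)\right]$ from Theorem~\ref{therr0} and rearranging the resulting gamma ratios yields Ineq.~\eqref{eq:errbounds2} at once.

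The next step is to establish the closed-form expression \eqref{eq:errbounds3} for $\left\|\C{G}_{n+1}^{(\alpha)}\right\|_{[-1,1)}$. For $\alpha\ge 0$, the normalized Gegenbauer polynomials obey the classical monotonicity bound $|G_m^{(\alpha)}(\tau)|\le G_m^{(\alpha)}(1)=1$ on $[-1,1]$, so $\|\C{G}_{n+1}^{(\alpha)}\|_{[-1,1)}\le 2$ with the value $2$ attained as $\tau\to 1$. For $-1/2<\alpha<0$ the Szeg\H{o} extremal analysis locates the supremum of $|G_m^{(\alpha)}|$ on $[-1,1]$ at the interior point $\tau=0$ rather than at the endpoints, with $G_m^{(\alpha)}(0)$ vanishing for odd $m$ and admitting a standard gamma-function closed form otherwise. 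Splitting on the parity of $n$, so that exactly one of $G_n^{(\alpha)}(0)$ and $G_{n+1}^{(\alpha)}(0)$ is nonzero, and bounding the remaining polynomial on $[-1,1]$ by its own extremum delivers the two parity-dependent branches of \eqref{eq:errbounds3}; the enhancement factors $1+\sqrt{(n+1)/(2\alpha+n+1)}$ and $\sqrt{n(2\alpha+n)}+n$ emerge after simplification via standard Gegenbauer identities. This computation is essentially the content of \cite[Theorem~5.1]{elgindy2018high1}, which I would invoke rather than repeat.

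For the asymptotic estimates \eqref{ineq:asymineqalphanonneg1} and \eqref{ineq:asymineqalphaneg1}, the approach is Stirling's formula applied to the factorial and gamma ratios in the prefactor of Ineq.~\eqref{eq:errbounds2}. Using $\Gamma(n+1+\alpha)/\Gamma(n+1+2\alpha)\sim(n+1)^{-\alpha}$ and $(n+1)!\sim\sqrt{2\pi(n+1)}\,((n+1)/e)^{n+1}$ one obtains
\[\frac{1}{(n+1)!\,K_{n+1}^{(\alpha)}}\sim c(\alpha)\,\left(\frac{e}{2}\right)^{n}\,n^{-(n+3/2-\alpha)},\]
for a positive $\alpha$-dependent constant $c(\alpha)$. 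Multiplying by $\|\C{G}_{n+1}^{(\alpha)}\|_{[-1,1)}=2$ when $\alpha\ge 0$, and absorbing $A_{\psi_k,n}$ together with the remaining $n$-independent quantities into $B_1^{(\alpha)}$, yields \eqref{ineq:asymineqalphanonneg1}. When $-1/2<\alpha<0$, Stirling applied to the parity formulas of \eqref{eq:errbounds3} gives $\|\C{G}_{n+1}^{(\alpha)}\|_{[-1,1)}\sim\tilde c(\alpha)\,n^{-\alpha}$, and the extra $n^{-\alpha}$ cancels the $n^{\alpha}$ in the previous display, shifting the exponent from $n+3/2-\alpha$ to $n+3/2$ and producing \eqref{ineq:asymineqalphaneg1}. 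The principal obstacle throughout is the second step: rigorously verifying that the interior extremum dominates for $-1/2<\alpha<0$ and carrying out the gamma-function bookkeeping to land on the specific parity-split forms of \eqref{eq:errbounds3}; the other two steps are routine once that extremal identity is in hand.
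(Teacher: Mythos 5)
Your argument is correct and follows essentially the route the paper intends: the paper offers no proof beyond deducing the bounds from \citet[Theorem 5.1]{elgindy2018high1}, and your reconstruction---taking the exact error representation of Corollary \ref{therr00}, bounding $\bigl|\int_{-1}^{\tau_j}\C{G}_{n+1}^{(\alpha)}\bigr|$ by $(1+\tau_j)\|\C{G}_{n+1}^{(\alpha)}\|_{[-1,1)}$, inserting the explicit $K_{n+1}^{(\alpha)}$, citing the same reference for the sup-norm formula \eqref{eq:errbounds3}, and finishing with Stirling---is precisely that deduction spelled out. The only caveat, inherited from the theorem statement itself rather than introduced by you, is that ``absorbing'' $A_{\psi_k,n}$ into the nominally $n$-independent constants $B_1^{(\alpha)},C_1^{(\alpha)}$ is only consistent with how the paper later uses these asymptotics (Corollary \ref{div} carries the $A_{\psi_k,n}$ factor explicitly), so it is cleaner to keep $A_{\psi_k,n}$ as an explicit multiplicative factor.
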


\begin{thm}
\label{therr2}
Let $\eta \in C^{n+1}[-1,1)$ and $\left\| \eta^{(n + 1)} \right\|_{[-1,1)} = A_{\eta, n} \in \MB{R}^{+}$, for some constant $A_{\eta, n}$ dependent on $n$. Then there exist some positive constants ${B_{2}^{(\alpha )}}$ and ${C_{2}^{(\alpha )}}$ dependent on $\alpha$ and independent of n such that the truncation error of Eq. \eqref{eq:13} is bounded by the following inequality:
\begin{equation}\label{eq:errbounds2_1}
\left|{}_\eta {E_n}\left(\zeta\right)\right| \leq \frac{A_{\eta, n} \Gamma{(n+2\alpha+1)} \Gamma{(\alpha+1)} }{2^{n-1} (n+1)! \Gamma{(n+\alpha+1)} \Gamma{(2 \alpha+1)}}{\left\| \C{G}_{n+1}^{(\alpha)} \right\|_{[ - 1,1)}},
\end{equation}
where $\zeta \in (-1,1)$. Moreover, 
\begin{equation}
\left| {_\eta {E_n}\left( \zeta  \right)} \right| \le B_2^{(\alpha )}{\left( {\frac{e}{2}} \right)^n}\frac{1}{{{n^{n + \frac{3}{2} - \alpha }}}},\quad \forall \alpha \ge 0,
\end{equation}
and
\begin{equation}
\left| {}_\eta {E_n}\left(\zeta\right) \right|\mathop  < \limits_ \sim  C_{2}^{(\alpha )} {\left( {\frac{e}{2}} \right)^n}\frac{1}{{{n^{n + \frac{3}{2}}}}},\quad \forall - 1/2 < \alpha  < 0,
\end{equation}
as $n \rightarrow \infty$. 
\end{thm}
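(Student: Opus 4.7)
The plan is to follow essentially the same scheme that yields Theorem \ref{therr1}, with two minor modifications: (i) the relevant integral in Corollary \ref{therr00} is taken over the full interval $[-1,1]$ instead of $[-1,\tau_j]$, so the boundary factor $1+\tau_j$ is replaced by $2$; and (ii) there is no $k$-index to track, since Eq.~\eqref{eq:13} represents a scalar cost functional approximation rather than a vector of state equations.

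First, I would start from the exact error expression \eqref{eq:errbounds4} of Corollary \ref{therr00}, namely
\[
{}_\eta E_{n}(\zeta) = \frac{\eta^{(n+1)}(\zeta)}{(n+1)! K_{n+1}^{(\alpha)}} \int_{-1}^{1} \C{G}_{n+1}^{(\alpha)}(\tau)\,d\tau,
\]
take absolute values, and bound $|\eta^{(n+1)}(\zeta)| \le A_{\eta,n}$ by hypothesis, together with the trivial estimate $\left|\int_{-1}^{1} \C{G}_{n+1}^{(\alpha)}(\tau)\,d\tau\right| \le 2\,\|\C{G}_{n+1}^{(\alpha)}\|_{[-1,1)}$. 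Substituting the closed form of $K_{n+1}^{(\alpha)}$ given in Theorem \ref{therr0}, namely
\[
K_{n+1}^{(\alpha)} = 2^{n}\,\frac{\Gamma(n+\alpha+1)\Gamma(2\alpha+1)}{\Gamma(n+2\alpha+1)\Gamma(\alpha+1)},
\]
and collecting the constants (the factor $2$ from the integration bound combines with $2^n$ in the denominator to produce $2^{n-1}$) yields Ineq. \eqref{eq:errbounds2_1} immediately. The norm $\|\C{G}_{n+1}^{(\alpha)}\|_{[-1,1)}$ itself is the piecewise formula \eqref{eq:errbounds3}, which is already available from Theorem \ref{therr1}; so no new combinatorics are required at this stage.

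For the asymptotic estimates, I would reuse verbatim the argument behind Ineqs. \eqref{ineq:asymineqalphanonneg1} and \eqref{ineq:asymineqalphaneg1} from Theorem \ref{therr1}, simply dropping the $(1+\tau_j)$ factor (now absorbed into the constant together with the $2$). The dominant asymptotic terms come from applying Stirling's formula to $\Gamma(n+2\alpha+1)/\Gamma(n+\alpha+1)$, which behaves like $n^{\alpha}$, and to $(n+1)!$, which produces the $(e/2)^{n}/n^{n+3/2}$ rate after combining with $2^{n-1}$. This gives the two declared bounds with constants $B_2^{(\alpha)}$ and $C_2^{(\alpha)}$ that depend only on $\alpha$; their precise values may be read off from the corresponding constants of Theorem \ref{therr1} by absorbing the factor $2$ that previously bounded $1+\tau_j$.

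There is no serious obstacle here: the theorem is the scalar/functional-cost counterpart of Theorem \ref{therr1}, and every analytical ingredient (the explicit error formula from Corollary \ref{therr00}, the norm estimate \eqref{eq:errbounds3}, and the Stirling-based asymptotics) has already been assembled. The only point that requires any care is bookkeeping of the constants so that the final inequalities match exactly the stated forms; in particular, one must check that using the crude bound $\left|\int_{-1}^{1}\C{G}_{n+1}^{(\alpha)}\,d\tau\right|\le 2\|\C{G}_{n+1}^{(\alpha)}\|_{[-1,1)}$ (as opposed to a sharper quadrature-free estimate) is indeed consistent with the exponent $n+3/2-\alpha$ appearing in the denominator for $\alpha\ge 0$ and $n+3/2$ for $-1/2<\alpha<0$, which it is, since the weight $\|\C{G}_{n+1}^{(\alpha)}\|_{[-1,1)}$ contributes the $\alpha$-shift only in the negative-$\alpha$ regime through its gamma-function prefactors, exactly as in the proof of Theorem \ref{therr1}.
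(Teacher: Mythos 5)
Your proposal is correct and takes essentially the same route the paper intends: the paper merely states that these bounds "can be deduced from" \citet[Theorem 5.1]{elgindy2018high1}, and your derivation---the error representation \eqref{eq:errbounds4} of Corollary \ref{therr00}, the closed form of $K_{n+1}^{(\alpha)}$, the estimate $\left|\int_{-1}^{1}\C{G}_{n+1}^{(\alpha)}(\tau)\,d\tau\right|\le 2\,\| \C{G}_{n+1}^{(\alpha)}\|_{[-1,1)}$ (which turns $2^{n}$ into $2^{n-1}$), the piecewise norm formula \eqref{eq:errbounds3}, and Stirling asymptotics giving $\Gamma(n+2\alpha+1)/\Gamma(n+\alpha+1)\sim n^{\alpha}$---is exactly the scalar analogue of Theorem \ref{therr1} with the factor $1+\tau_j$ replaced by $2$. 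The only remark worth making is that, just as in the paper's own statement of Theorems \ref{therr1} and \ref{therr2}, the asymptotic bounds suppress the $n$-dependence of $A_{\eta,n}$, so your write-up introduces no gap beyond what the paper itself assumes.
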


\subsection{Divergence of Typical IPS Collocation Schemes of the FHOCI at Any Large Mesh Grid of Gauss Type When $T = T_{1,L}^{(\alpha)}$ or $T = T_{2,L}^{(\alpha)}$}
\label{subsec:DOTPCS}
In this section we derive some striking results regarding the convergence of typical collocation schemes of the FHOCI described by Eqs. \eqref{eq:5}, \eqref{eq:7}, and \eqref{eq:8} when $T \in \{T_{1,L}^{(\alpha)}, T_{2,L}^{(\alpha)}\}$ and the mesh grid is large and of Gauss-type. While the proof is pertained to the FHOCI and it employs the GGR points as the collocation points, it can be generalized to the usual form of the FHOC described by Eqs. \eqref{eq:5}-\eqref{eq:7} and any large collocation points of Gauss type, whence it becomes of considerably greater interest. We derive these interesting divergence results in the following two corollaries.

\begin{cor}
\label{div}
Let $T \in \lbrace T_{1,L}^{(\alpha)}, T_{2,L}^{(\alpha)} \rbrace$, and suppose that $\exists\, \hk \in \{1, \ldots, n_x\}: \psi_{{\hk}} \in C^{n}[-1,1)$, $0 < \displaystyle{\left\| f_{{\hk}} \right\|_{[-1,1)}} < \infty$, and  $0 \le \left\| \displaystyle{\frac{d^{j}}{d \tau^{j}}}f_{{\hk}} \right\|_{[-1,1)} <\infty \quad \forall j=1,\ldots,n+1$, then the upper truncation error bounds of the Approximations System \eqref{eq:9mf} diverge at each collocation point as $n \rightarrow \infty$, for any map scaling parameter value $L$.
\end{cor}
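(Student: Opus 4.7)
The plan is to show that the boundedness hypothesis $A_{\psi_{\hk},n} \in \MBR^+$ underlying Theorem \ref{therr1} fails outright for both maps, making Ineq. \eqref{eq:errbounds2} vacuously infinite at every collocation point and for every $n$, $L$, and $\alpha$. The divergence is therefore inherited directly from a boundary singularity of $T'$ at $\tau = 1$ that is transmitted through Leibniz's rule to every high-order derivative of $\psi_{\hk}$.

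First, I would apply Leibniz's rule to $\psi_{\hk}(\tau) = T'(\tau)\, f_{\hk}(\tbmx(\tau),\tbmu(\tau))$ and record the closed forms obtained by induction,
\[
(T_1')^{(j)}(\tau) = \frac{2L(j+1)!}{(1-\tau)^{j+2}}, \qquad (T_2')^{(j)}(\tau) = \frac{Lj!}{(1-\tau)^{j+1}}, \quad j \ge 0,
\]
where $T_i \equiv T_{i,L}^{(\alpha)}$. Next, for $T=T_2$, I would collect the Leibniz expansion over the common denominator $(1-\tau)^{n+2}$, obtaining
\[
\psi_{\hk}^{(n+1)}(\tau) = \frac{L}{(1-\tau)^{n+2}}\left[(n+1)!\, f_{\hk}(\tau) + \sum_{j=0}^{n}\binom{n+1}{j}\, j!\,(1-\tau)^{n+1-j}\, f_{\hk}^{(n+1-j)}(\tau)\right],
\]
where $f_{\hk}^{(m)}$ abbreviates $d^m f_{\hk}(\tbmx(\tau),\tbmu(\tau))/d\tau^m$. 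The bounded-derivative hypotheses on $f_{\hk}$ force the bracket to converge to $(n+1)!\, f_{\hk}(1^-)$ as $\tau \to 1^-$, while the prefactor blows up. Under the generic assumption that $f_{\hk}$ does not vanish identically on a left-neighborhood of $\tau = 1$, one concludes $|\psi_{\hk}^{(n+1)}(\tau)| \to +\infty$; the case $T = T_1$ is identical up to a pole of order $n+3$. Thus $A_{\psi_{\hk},n} = \|\psi_{\hk}^{(n+1)}\|_{[-1,1)} = +\infty$ for every $n$, and substituting this into Ineq. \eqref{eq:errbounds2} yields the advertised divergence at each $\tau_j$.

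The main delicate point will be handling the non-generic scenario where $f_{\hk}$ vanishes identically on some left-neighborhood of $\tau = 1$, which is not explicitly excluded by the hypothesis $0 < \|f_{\hk}\|_{[-1,1)}$. Because the Leibniz summands in the display above carry poles of strictly distinct orders at $\tau = 1$, cancellation of the leading $(1-\tau)^{-(n+2)}$ singularity would force $f_{\hk}$ together with all its derivatives up to order $n+1$ to vanish at $\tau = 1$ simultaneously, which is incompatible with the analytic non-degeneracy tacitly assumed throughout the paper, so this corner case does not obstruct the proof.
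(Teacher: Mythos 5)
There is a genuine gap, and it sits at the center of your argument: the claim that $A_{\psi_{\hk},n}=\|\psi_{\hk}^{(n+1)}\|_{[-1,1)}=+\infty$ for every $n$ does not follow from the hypotheses of the corollary. The assumptions only give $0<\|f_{\hk}\|_{[-1,1)}<\infty$ and finiteness of $\|d^{j}f_{\hk}/d\tau^{j}\|_{[-1,1)}$; they do not prevent $f_{\hk}(\tbmx(\tau),\tbmu(\tau))$ and its derivatives from tending to zero as $\tau\to 1^-$ fast enough (e.g.\ like $\exp(-c/(1-\tau))$, which is exactly what happens when the original dynamics decay exponentially in $t$ --- the typical IHOC scenario, and the very situation the paper itself points to when it notes that the ill-conditioning near $\tau=1$ ``relaxes or disappears'' if $\lim_{\tau\to1}T'\bmf=\bmzer$). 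In that case the bracket in your display tends to $0$, the limit of $\psi_{\hk}^{(n+1)}$ is indeterminate, and $A_{\psi_{\hk},n}$ can perfectly well be finite. Your dismissal of the ``corner case'' is also not sound: a $C^{\infty}$ (non-analytic) composition can vanish at $\tau=1$ together with all its derivatives without vanishing identically on a left-neighborhood, so the ``distinct pole orders force identical vanishing'' argument and the appeal to tacit analytic non-degeneracy do not close the hole. Finally, even if $A_{\psi_{\hk},n}$ were infinite, you could not substitute it into Ineq.\ \eqref{eq:errbounds2}, since Theorem \ref{therr1} is stated under $A_{\psi_k,n}\in\MB{R}^{+}$; and the resulting reading (``the bound is $+\infty$ for every fixed $n$'') trivializes the corollary rather than proving divergence \emph{as} $n\to\infty$.

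The paper's proof keeps the estimate finite in spirit and quantifies its growth: after the same Leibniz expansion, it bounds $A_{\psi_{\hk},n}=O\bigl(\|T^{(n+2)}\|_{[-1,1)}\bigr)$ using the closed forms $(T_{1,L}^{(\alpha)})^{(m)}=2Lm!/(1-\tau)^{m+1}$ and $(T_{2,L}^{(\alpha)})^{(m)}=L(m-1)!/(1-\tau)^{m}$, substitutes this into the asymptotic bounds of Theorem \ref{therr1}, and obtains the explicit upper bounds \eqref{ineq:asymineq1}--\eqref{ineq:asymineq4}, whose factors $(n+2)!\,(e/2)^{n}n^{-(n+3/2-\alpha)}$ together with the $n$-dependent powers of $1/(1-\tau)$ (which soar as the mesh points crowd toward $\tau=1$) grow without bound in $n$ for any $L$. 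That is the sense in which ``the upper truncation error bounds diverge,'' and it is a statement about the bounds, not about $\psi_{\hk}^{(n+1)}$ itself being unbounded. If you want to salvage your route, you would have to either add an explicit non-degeneracy hypothesis on the behavior of $f_{\hk}$ as $\tau\to1^-$ (which changes the statement being proved), or revert to estimating $A_{\psi_{\hk},n}$ from above and tracking its $n$-dependence as the paper does.
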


\begin{proof}
By the General Leibniz Rule, the $(n+1)$st-derivative of $\psi_{{\hk}}$ is given by
\begin{equation}
\psi_{{\hk}}^{(n+1)}(\tau)=\sum_{j=0}^{n+1} \binom{n+1}{j} T^{(n+2-j)}(\tau) \frac{d^{j}}{d \tau^{j}} f_{{\hk}}\left(\tbmx(\tau),\tbmu(\tau)\right),
\label{eq:div1}
\end{equation}
whence, 
\begin{equation}
\left\| \psi_{{\hk}}^{(n+1)}\right\|_{[-1,1)}=\sum_{j=0}^{n+1} \binom{n+1}{j} \left\|T^{(n+2-j)}\right\|_{[-1,1)} \left\|\frac{d^{j}}{d \tau^{j}} f_{{\hk}}\right\|_{[-1,1)}.
\label{eq:div2}
\end{equation}
Let $T=T_{1,L}^{(\alpha)}$, and notice that $\left(T_{1,L}^{(\alpha)}\right)^{(m)}(\tau)=\displaystyle{\frac{2L(m)!}{(1-\tau)^{m+1}}}\, \forall m \in \MB{Z}^{+}$, which is a monotonically increasing function for increasing values of $\tau$ as clearly seen from Figure \ref{fig1div}. Therefore, $A_{\psi_{\hk}, n} = O\left( \left\|\left(T_{1,L}^{(\alpha)}\right)^{(n+2)}\right\|_{[-1,1)} \right)$. From Theorem \ref{therr1}, there exist some positive constants $\hat B_{1}^{(\alpha)}$ and $\hat C_{1}^{(\alpha)}$ dependent on $\alpha$ and independent on $n$ such that
\begin{subequations}
\begin{equation}\label{ineq:asymineq1}
\left| {{}_{\psi_{\hk}} {E_n}\left(\tau_{j},\xi_{j}\right)} \right| \le L {\hat B_{1}^{(\alpha)}} (n+2)!\,\left(\frac{e}{2}\right)^n \frac{1+\tau_{j}}{{{n^{n + \frac{3}{2} - \alpha }}}}\left\| (1-\tau)^{-n-3}\right\|_{[-1,1)},\quad \forall \alpha \ge 0,
\end{equation}
and
\begin{equation}\label{ineq:asymineq2}
\left| {{}_{\psi_{\hk}} {E_n}\left(\tau_{j},\xi_{j}\right)} \right| \mathop  < \limits_ \sim L {\hat C_{1}^{(\alpha)}} (n+2)!\,\left(\frac{e}{2}\right)^n \frac{1+\tau_{j}}{{n^{n + \frac{3}{2}}}}\left\| (1-\tau)^{-n-3}\right\|_{[-1,1)},\quad \forall -1/2 < \alpha  < 0,
\end{equation}
\end{subequations}
whence we realize that the upper bound of $\left|{}_{\psi_{\hk}} {E_n}\right|$ at each collocation point $\tau_{j}$ diverges as $n\rightarrow \infty$. Consider now the case when $T = T_{2,L}^{(\alpha)}$. By a similar argument, notice first that $\left(T_{2,L}^{(\alpha)}\right)^{(m)}(\tau) = \displaystyle{\frac{L(m-1)!}{(1-\tau)^{m}}}\; \forall m \in \MB{Z}^{+}$ is also a monotonically increasing function for increasing values of $\tau$ as shown by Figure \ref{fig1div}. Therefore, $A_{\psi_{\hk}, n} = O\left( \left\|\left(T_{2,L}^{(\alpha)}\right)^{(n+2)}\right\|_{[-1,1)} \right)$. From Theorem \ref{therr1}, there exist some positive constant $\hat B_{2}^{(\alpha)}$ and $\hat C_{2}^{(\alpha)}$ dependent on $\alpha$ and independent on $n$ such that  
\begin{subequations}
\begin{equation}\label{ineq:asymineq3}
\left| {{}_{\psi_{\hk}} {E_n}\left(\tau_{j},\xi_{j}\right)} \right| \le L \hat B_2^{(\alpha )} (n + 1)!\,{\left( {\frac{e}{2}} \right)^n}\frac{{1 + {\tau _j}}}{{{n^{n + \frac{3}{2} - \alpha }}}}{\left\| {{{(1 - \tau )}^{ - n - 2}}} \right\|_{[ - 1,1)}},\quad \forall \alpha  \ge 0,
\end{equation}
and
\begin{equation}\label{ineq:asymineq4}
\left| {{}_{\psi_{\hk}} {E_n}\left(\tau_{j},\xi_{j}\right)} \right| \mathop  < \limits_ \sim L \hat C_2^{(\alpha )}(n + 1)!\,{\left( {\frac{e}{2}} \right)^n} \frac{{1 + {\tau _j}}}{{{n^{n + \frac{3}{2}}}}}{\left\| {{{(1 - \tau )}^{ - n - 2}}} \right\|_{[ - 1,1)}},\quad \forall -1/2 < \alpha  < 0,
\end{equation}
\end{subequations}
from which we observe that the upper bound of $\left|{}_{\psi_{\hk}} {E_n}\right|$ at each collocation point $\tau_j$ diverges as $n \rightarrow \infty$.
\end{proof}

\begin{figure}[h]
\centering
\includegraphics[scale=0.5]{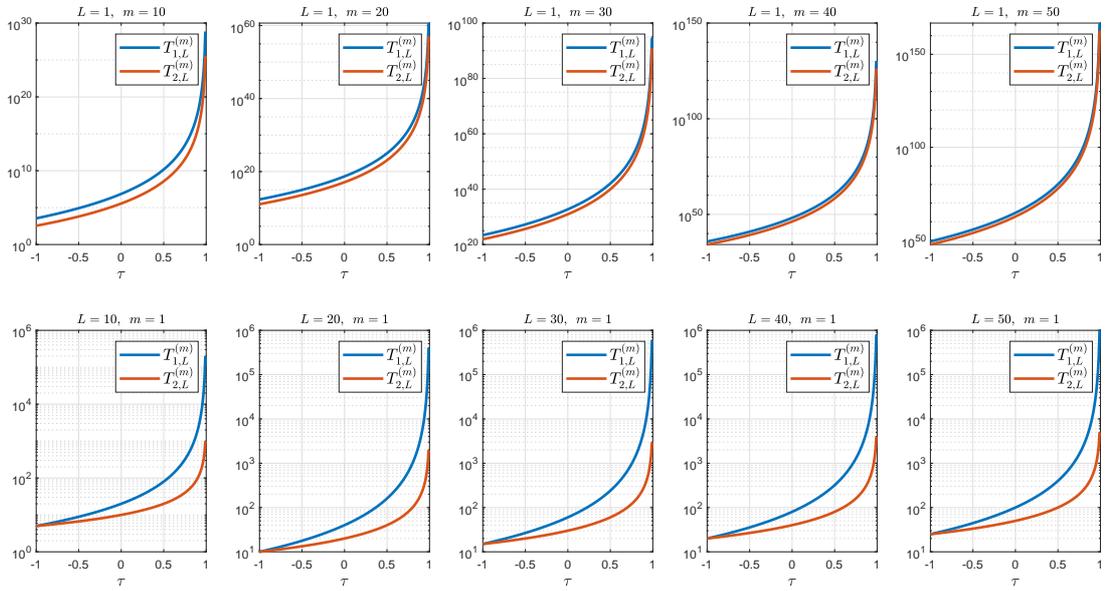}
\caption{The $m$th-order derivatives of $T_{1,L}^{(\alpha)}$ and $T_{2,L}^{(\alpha)}$ versus $\tau$ in log-lin scale for several values of $L$ and $m$. The superscript of $T_{i,L}^{(\alpha)}, i=1,2$ has been omitted in the plots.}
\label{fig1div}
\end{figure}

Theorem \ref{therr1} and Corollary \ref{div} reveal an interesting fact: Under their assumptions, the proposed method is expected to converge with an exponential rate to near-optimal solutions for increasing $n$-values within a relatively small $n$-values range as indicated by Inequality \eqref{eq:errbounds2}, but as $n$ grows large, the constant $A_{\psi_{\hk}, n}$ grows exponentially fast and ultimately dominates the error bounds when $n \to \infty$, as implied by Inequalities (\ref{ineq:asymineq1})-(\ref{ineq:asymineq4}), regardless of how well we choose the map scaling parameter value $L$. In fact, the asymptotic results of Corollary \ref{div} manifest that for increasing large $n$-values, reducing the $L$-value abates the divergence of the approximations at the outset, but as $n$ grows larger, this approach fails to cope with the soaring values of $n$ powers of the factors $1/(1-\tau_j)$ at the mesh points $\tau_j$, for sufficiently close mesh points values to $\tau = 1$; thus, divergence is inevitable! 

While the above forward error analysis may be too pessimistic and may reject solutions that are sufficiently accurate, another concern arise when we analyze the sensitivity of NLP1 and NLP2 associated with the maps $T_{1,L}^{(\alpha)}$ and $T_{2,L}^{(\alpha)}$ to input data errors. Observe that both problems require the computations of the maps $T'_{1,L}$ and $T'_{2,L}$ which are ill-conditioned for arguments near $1$. In particular, suppose that $\tau \approx 1$ with a small perturbation $h$ to $\tau$. Then the absolute errors in computing $T'_{1,L}(\tau)$ and $T'_{2,L}(\tau)$ are given by 
\[\left|T'_{1,L}(\tau+h) - T'_{1,L}(\tau)\right| \approx \frac{4 L\,|h|}{(1-\tau)^3}\quad \text{and}\quad \left|T'_{2,L}(\tau+h) - T'_{2,L}(\tau)\right| \approx \frac{L\,|h|}{(1-\tau)^2},\]
and hence the relative errors are $\displaystyle{\frac{2 |h|}{1-\tau}}$ and $\displaystyle{\frac{|h|}{1-\tau}}$, respectively, which blow up as $\tau \to 1$. Recall that GGR points cluster near $\pm 1$ as $n \to \infty$, so the sensitivity of the problem of calculating the maps derivative functions $T'_{1,L}$ and $T'_{2,L}$ at arguments near $1$ increases for increasing values of $n$. For example, let $\tau = 0.9999999999999$ be an exact argument value and consider its approximation $\hat \tau = 0.9999999999998$ with a small perturbation of about $9.99 \times 10^{-14}$ to $\tau$. Then the relative error in the input value is about $10^{-11} \%$. However, the relative errors in computing $T'_{1,L}(\tau)$ and $T'_{2,L}(\tau)$ are nearly $75 \%$ and $50 \%$. Hence, the relative changes in evaluating $T'_{1,L}(\tau)$ and $T'_{2,L}(\tau)$ are about $7.5$ and $5$ trillion times larger than the relative change in the input value in respective order! This example shows that increasing the mesh size shifts the positive collocation points closer and closer towards $\tau = 1$ and wild ill-conditioning ultimately rears its ugly head, as the sensitivity of NLP2 progressively stiffens for arguments near $1$. Therefore, one should keep in mind that reducing $L$ may still improve the approximations for a certain range of $n$-values, nonetheless this strategy is not susceptible to produce accurate approximations for relatively large values of $n$, in general, since both NLP1 and NLP2 are ill-conditioned near $\tau = 1$. It is noteworthy to mention here that this sensitivity of NLP1 and NLP2 near $\tau = 1$ is foreseen to relax or disappear if $g$ and $f_k\,\forall k$ decay exponentially fast such that $\mathop {\lim }\nolimits_{\tau  \to 1} {T'(\tau) g\left(\tbmx(\tau),\tbmu(\tau)\right) } = 0$ and $\mathop {\lim }\nolimits_{\tau  \to 1} {T'(\tau) \bmf(\tbmx(\tau),\tbmu(\tau))} = \bmzer$. 
Under a similar proof to that of Corollary \ref{div}, one can derive the following second divergence result.

\begin{cor}
\label{div2}
Let $T \in \lbrace T_{1,L}^{(\alpha)}, T_{2,L}^{(\alpha)} \rbrace$, $\eta \in C^{n}[-1,1)$, $0 < \displaystyle{\left\| g \right\|_{[-1,1)}} < \infty$, and  $0 \le \left\| \displaystyle{\frac{d^{j}}{d \tau^{j}}}g \right\|_{[-1,1)} <\infty\,\forall j = 1,\ldots,n+1$, then the upper truncation error bound of Approximation \eqref{eq:13} diverges as $n \rightarrow \infty$, for any map scaling parameter value $L$.
\end{cor}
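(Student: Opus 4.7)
My strategy is to mirror the argument of Corollary \ref{div} almost verbatim, but with the integrand $\eta(\tau) = T'(\tau) g(\tbmx(\tau), \tbmu(\tau))$ of the cost functional playing the role of $\psi_{\hk}$, and with Theorem \ref{therr2} invoked in place of Theorem \ref{therr1}. The central mechanism that drives the divergence is the same: successive derivatives of either parametric map $T_{1,L}^{(\alpha)}$ or $T_{2,L}^{(\alpha)}$ grow factorially in $n$ and carry arbitrarily large negative powers of $(1-\tau)$ near $\tau = 1$, and this growth dominates the Gegenbauer error factor $(e/2)^n / n^{n+\frac{3}{2}-\alpha}$ of Theorem \ref{therr2} regardless of the value of $L > 0$.

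The first step will be to expand $\eta^{(n+1)}$ by the General Leibniz Rule,
\[
\eta^{(n+1)}(\tau) = \sum_{j=0}^{n+1} \binom{n+1}{j} T^{(n+2-j)}(\tau) \frac{d^{j}}{d\tau^{j}} g(\tbmx(\tau), \tbmu(\tau)),
\]
in complete analogy with Eq. \eqref{eq:div1}. Passing to uniform norms on $[-1,1)$ and using the hypothesis $0 \le \|d^j g / d\tau^j\|_{[-1,1)} < \infty$ for $j = 0, \ldots, n+1$, one can factor out the bounded $g$-part of each summand. Because $\|g\|_{[-1,1)} > 0$ is bounded away from zero, the $j=0$ term survives and dominates asymptotically, yielding $A_{\eta,n} = O\!\bigl( \|T^{(n+2)}\|_{[-1,1)} \bigr)$.

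Next, I would evaluate $\|T^{(n+2)}\|_{[-1,1)}$ using the explicit closed forms already derived in the proof of Corollary \ref{div}. For $T = T_{1,L}^{(\alpha)}$, the identity $(T_{1,L}^{(\alpha)})^{(m)}(\tau) = 2L\,m!/(1-\tau)^{m+1}$ gives $A_{\eta,n} = O\!\bigl( L (n+2)!\,\|(1-\tau)^{-n-3}\|_{[-1,1)} \bigr)$; while for $T = T_{2,L}^{(\alpha)}$, the identity $(T_{2,L}^{(\alpha)})^{(m)}(\tau) = L\,(m-1)!/(1-\tau)^m$ gives $A_{\eta,n} = O\!\bigl( L (n+1)!\,\|(1-\tau)^{-n-2}\|_{[-1,1)} \bigr)$. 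Substituting into Theorem \ref{therr2} produces, for some positive constants $\hat B^{(\alpha)}, \hat C^{(\alpha)}$ independent of $n$, asymptotic estimates on $|{}_\eta E_n(\zeta)|$ of exactly the shape of Ineqs. \eqref{ineq:asymineq1}--\eqref{ineq:asymineq4}, but without the node-dependent prefactor $(1+\tau_j)$.

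To conclude, Stirling's approximation shows that $(n+k)!\,(e/2)^n / n^{n+\frac{3}{2}-\alpha} \to \infty$ super-exponentially for $k \in \{1,2\}$ and any $\alpha > -1/2$, while the additional factor $\|(1-\tau)^{-n-r}\|_{[-1,1)}$, $r \in \{2,3\}$, is unbounded because its supremum is attained as $\tau \to 1^-$. Hence the upper truncation error bound on $|{}_\eta E_n(\zeta)|$ diverges as $n \to \infty$ for every choice of $L > 0$, establishing the corollary. The only real bookkeeping obstacle, exactly as in Corollary \ref{div}, is to justify rigorously that the $j=0$ term of the Leibniz sum dominates the remaining $n+1$ summands asymptotically; once this is secured, the rest is a direct comparison with Theorem \ref{therr2}.
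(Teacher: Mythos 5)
Your proposal is correct and follows essentially the same route as the paper: the paper itself gives no separate argument for Corollary \ref{div2}, stating only that it follows ``under a similar proof to that of Corollary \ref{div},'' and your plan is precisely that adaptation --- the General Leibniz Rule applied to $\eta = T'\,g$, the estimate $A_{\eta,n} = O\bigl(\|T^{(n+2)}\|_{[-1,1)}\bigr)$ from the closed forms of the map derivatives, and substitution into Theorem \ref{therr2} in place of Theorem \ref{therr1}.
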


The present analysis begs another interesting question: Which map should we use if we desire to implement the proposed method? For small/medium range of $n$-values, the answer is a bit elusive; however, for large values, it seems we have a crystal clear answer as shown by the following corollary.
\begin{cor}\label{cor:GDivnew1}
A Gegenbauer-Gauss collocation of the FHOCI using the map $T_{1,L}^{(\alpha)}$ generally diverges faster than applying the method lumped with the map $T_{2,L}^{(\alpha)}$ when $n  \to \infty$.
\end{cor}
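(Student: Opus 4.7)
The plan is to compare, term by term, the asymptotic upper bounds on the truncation errors established in the proofs of Corollaries \ref{div} and \ref{div2} for the two maps, and then show that the ratio of the $T_{1,L}^{(\alpha)}$-bound to the $T_{2,L}^{(\alpha)}$-bound grows without bound as $n\to\infty$. Because both bounds diverge, ``faster divergence'' is most naturally interpreted as an unbounded multiplicative gap between them, and that is what I would aim to quantify.

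First, I would revisit the General Leibniz expansion \eqref{eq:div1} applied separately to $T=T_{1,L}^{(\alpha)}$ and $T=T_{2,L}^{(\alpha)}$. The dominant term as $n\to\infty$ is the $j=0$ term, carrying the factor $T^{(n+2)}(\tau)\,f_{\hk}(\tbmx(\tau),\tbmu(\tau))$, because the explicit formulas
\[
\bigl(T_{1,L}^{(\alpha)}\bigr)^{(m)}(\tau)=\frac{2L\,m!}{(1-\tau)^{m+1}},\qquad \bigl(T_{2,L}^{(\alpha)}\bigr)^{(m)}(\tau)=\frac{L\,(m-1)!}{(1-\tau)^{m}}
\]
exhibit factorial growth in $m$, whereas the other factors $\|(d^{j}/d\tau^{j})f_{\hk}\|_{[-1,1)}$ and $\|(d^{j}/d\tau^{j})g\|_{[-1,1)}$ are finite by the hypotheses of Corollaries \ref{div} and \ref{div2}. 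A routine bound of the remaining Leibniz terms against the $j=0$ term confirms this asymptotic dominance and justifies the estimate $A_{\psi_{\hk},n}=O(\|T^{(n+2)}\|_{[-1,1)})$ used earlier in the paper.

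Second, I would form the pointwise ratio of the dominant derivative terms,
\[
\frac{\bigl(T_{1,L}^{(\alpha)}\bigr)^{(n+2)}(\tau)}{\bigl(T_{2,L}^{(\alpha)}\bigr)^{(n+2)}(\tau)}=\frac{2(n+2)}{1-\tau},
\]
and propagate it through Inequalities \eqref{ineq:asymineq1}--\eqref{ineq:asymineq4}. This yields an extra multiplicative factor of order $(n+2)/(1-\tau_{j})$ in the $T_{1,L}^{(\alpha)}$-bound compared to the $T_{2,L}^{(\alpha)}$-bound at every collocation point $\tau_{j}$, irrespective of the sign of $\alpha$. Since this factor grows without bound as $n\to\infty$ for any $\tau_{j}$ bounded away from $1$, and since the GGR points cluster near $1$ so that $1-\tau_{j}=O(1/n^{2})$ for the rightmost nodes, the ratio in fact diverges at a polynomial rate in $n$. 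An analogous comparison, using Corollary \ref{div2} in place of Corollary \ref{div} (with $g$ replacing $f_{\hk}$), shows the same extra factor in the cost-functional truncation error.

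The main obstacle is making the informal statement ``the $j=0$ term dominates the Leibniz sum'' rigorous: one must verify that the combined factorial and $(1-\tau)^{-(n+3)}$ (resp.\ $(1-\tau)^{-(n+2)}$) growth of $\|T^{(n+2)}\|$ outstrips all cross terms involving lower derivatives of $T$ paired with derivatives of $f_{\hk}$, uniformly in $\tau\in[-1,1)$, under the finiteness hypotheses on the derivatives of $f_{\hk}$ and $g$. I would handle this by a straightforward binomial-coefficient estimate, bounding each $\|T^{(n+2-j)}\|\,\|d^{j}f_{\hk}/d\tau^{j}\|$ by a constant times $\|T^{(n+2)}\|/((n+2)_{j})$, whereby the tail of the Leibniz sum is absorbed into a single constant multiple of the $j=0$ term. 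Combining these ingredients delivers the claimed faster divergence of the $T_{1,L}^{(\alpha)}$ scheme relative to the $T_{2,L}^{(\alpha)}$ scheme.
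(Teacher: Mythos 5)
Your proposal is correct and follows essentially the same route as the paper: the paper's proof likewise justifies the claim by comparing Inequalities \eqref{ineq:asymineq1}--\eqref{ineq:asymineq4}, where the $T_{1,L}^{(\alpha)}$ bounds carry the extra factor of order $2(n+2)/(1-\tau)$ relative to the $T_{2,L}^{(\alpha)}$ bounds, which is exactly the ratio you compute and propagate (your added discussion of the $j=0$ Leibniz-term dominance is just a more detailed justification of the estimate $A_{\psi_{\hk},n}=O(\|T^{(n+2)}\|_{[-1,1)})$ already used in Corollary \ref{div}). The only element of the paper's proof you omit is its brief supplementary remark that $T'_{1,L}$ is more severely ill-conditioned than $T'_{2,L}$ near $\tau=1$ by the same factor $2/(1-\tau)$, which reinforces but is not essential to the divergence-rate comparison.
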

\begin{proof}
The faster divergence exhibited using the map $T_{1,L}^{(\alpha)}$ compared with $T_{2,L}^{(\alpha)}$ as $n  \to \infty$ can be easily justified through Inequalities \eqref{ineq:asymineq1}-\eqref{ineq:asymineq4}. Moreover, since $T'_{1,L}(\tau)$ grows faster than $T'_{2,L}(\tau)$ by a factor of $2/(1-\tau)$, which blows up as $\tau \to 1$, the ill-conditioning of $T'_{1,L}$ is clearly more severe than that of $T'_{2,L}$ for values of $\tau \approx 1$. 
\end{proof}
Corollary \ref{cor:GDivnew1} manifests that the map $T_{2,L}^{(\alpha)}$ is more likely a better choice than $T_{1,L}^{(\alpha)}$ for large $n$ values. We end this section by drawing the attention of the reader to the fact that integral reformulations of various mathematical models have received considerable attention in the literature because they often produce well-conditioned linear systems. While numerical quadratures and integration matrices are generally more stable than numerical differentiation operators and matrices, there is no strong reason to expect that standard PS collocations of the FHOC in its strong differential-form using a single mesh grid of Gauss-type and maps like $T_{1,L}^{(\alpha)}$ and $T_{2,L}^{(\alpha)}$ would exhibit any merits over the current method, and they would ultimately diverge for a large mesh grid size. These considerations lead naturally to the following interesting conjecture.
\begin{conj}
Classical Jacobi polynomial collocations of the FHOC in differential/integral-form obtained through maps like $T_{1,L}^{(\alpha)}$ and $T_{2,L}^{(\alpha)}$ will likely diverge as the mesh size grows large, if the computations are carried out using floating-point arithmetic and the discretizations use a single mesh grid whether they are of Gauss/Gauss-Radau (GR) type or equally-spaced. The former divergence case is a direct result of the present divergence analysis, while the latter case is due to Runge's phenomenon and the ill-conditioning of polynomial interpolation at equally-spaced nodes as the degree of the polynomial grows.
\end{conj}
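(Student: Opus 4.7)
The plan is to break the conjecture into its two stated causes and treat each separately, reusing the machinery of Sections \ref{sec:errb} and \ref{subsec:DOTPCS} for the Gauss/Gauss-Radau part and invoking classical polynomial-interpolation pathologies for the equally-spaced part.

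For the Gauss or Gauss-Radau case with any Jacobi family (not only Gegenbauer), the key observation is that the classical Jacobi-interpolation error formula retains the same factored shape $f^{(n+1)}(\xi)/(n+1)!\cdot\prod_{k=0}^n (\tau-\tau_k)$ used in Theorem \ref{therr0}, with the monic node polynomial bounded uniformly on $[-1,1)$, up to algebraic factors, by a constant multiple of $\|P_{n+1}^{(\alpha,\beta)}\|_{[-1,1)}$. I would then replicate the General Leibniz expansion of Corollary \ref{div}, noting that the $m$th derivatives of $T_{1,L}^{(\alpha)}$ and $T_{2,L}^{(\alpha)}$ carry the factors $m!/(1-\tau)^{m+1}$ and $(m-1)!/(1-\tau)^m$ respectively, so an $(n+2)!$ or $(n+1)!$ factor appears in the numerator of the error bound at the rightmost collocation point. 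This factorial growth, multiplied by $\|(1-\tau)^{-n-3}\|_{[-1,1)}$ or $\|(1-\tau)^{-n-2}\|_{[-1,1)}$, overwhelms the $1/(n+1)!$ cancellation and the algebraic decay of $\|P_{n+1}^{(\alpha,\beta)}\|_{[-1,1)}$, forcing divergence as $n \to \infty$ for every fixed $L > 0$. The same reasoning extends to the strong differential-form collocation, since collocating the transformed dynamics in \eqref{eq:6} via a Jacobi differentiation matrix still pulls the same ill-conditioned map derivatives into the residual equations.

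For the equally-spaced case, I would instead rely on Runge's phenomenon: the Lebesgue constant $\Lambda_n^{\text{eq}}$ for equispaced polynomial interpolation on $[-1,1]$ obeys the classical asymptotic $\Lambda_n^{\text{eq}} \sim 2^{n+1}/(e\,n\ln n)$ as $n \to \infty$. Inserting this into the perturbation bound \eqref{eq:hi2} in place of $\Lambda_n^{(\alpha)}$, the floating-point roundoff in the input data, bounded below by the unit roundoff of double-precision arithmetic, is amplified by an exponentially growing factor, so even an analytic transformed integrand produces an unbounded computed collocation error. Combining this with the factorial-type growth of $\psi_k^{(n+1)}$ from the Leibniz expansion used in the Gauss case shows that no choice of $L$ can stabilize the scheme on a single equispaced mesh.

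The principal obstacle, and the reason this result is stated as a conjecture rather than a theorem, is the imprecise phrase \emph{``will likely diverge.''} To promote the argument to a proof one must (i) refine the claim to an explicit statement, e.g.\ the computed forward error at the rightmost collocation point diverges as $n \to \infty$ under a realistic floating-point model, and (ii) exclude the favorable subclass of integrands in which $g$ and $\bmf$ decay fast enough that $\lim_{\tau \to 1^-}T'(\tau)g$ and $\lim_{\tau \to 1^-}T'(\tau)\bmf$ vanish, since the boundary singularities of $T'_{1,L}$ and $T'_{2,L}$ are then killed and the divergence argument fails, as already remarked after Corollary \ref{div2}. A clean formulation would require that at least one component of $\bmf$ does not decay fast enough to annihilate the $(1-\tau)^{-m}$ factors; under that nondegeneracy hypothesis, the factorial blow-up of the error bound becomes unavoidable and the conjecture upgrades to a genuine theorem.
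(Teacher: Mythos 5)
Your proposal is essentially the paper's own reasoning: the statement is left as a conjecture, and the only justification the paper offers is exactly your two-pronged argument --- the bound-divergence mechanism of Corollaries \ref{div} and \ref{div2} (Leibniz expansion plus the factorial/singular growth of the map derivatives) extended to Gauss/GR grids and to the differential form, and Runge's phenomenon with the exponentially growing equispaced Lebesgue constant for uniform grids, together with the caveat (noted after Corollary \ref{div2}) that exponentially decaying integrands escape the argument. You also correctly identify why this stays a conjecture rather than a theorem --- the analysis only shows divergence of upper error bounds and worst-case perturbation amplification, not of the computed error itself --- which matches the paper's own admission that its forward error analysis ``may be too pessimistic.''
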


\section{Numerical Experiments}
\label{sec:NE1}
This section presents the results of some numerical experiments on two test examples which demonstrate the accuracy and efficiency of the proposed GGR-IPS12 methods for small/medium range mesh grid sizes, and verifies the inevitable divergence as the mesh size grows large. All numerical experiments were carried out using MATLAB R2022a software installed on a personal laptop equipped with a 2.9 GHz AMD Ryzen 7 4800H CPU and 16 GB memory running on a 64-bit Windows 11 operating system. The NLPs obtained through the GGR-IPS12 methods were solved using either (i) MATLAB fmincon solver with interior-point algorithm (fmincon-int) and sqp algorithm (fmincon-sqp), or (ii) the augmented Lagrange multiplier method \cite{hestenes1969multiplier,powell1969method} integrated with a modified BFGS method and a Chebyshev PS line search method (MBFGS-CPSLSM) \cite{elgindy2018optimization}; henceforth, referred to by the \textit{``EALMM.''} It should be clearly understood by the reader when we coin the name of the current collocation method with any NLP solver that we are implementing them both to solve the FHOCI. For example, the acronym GGR-IPS12-EALMM stands for the GGR-IPS12 methods combined with the EALMM. In all numerical tests, the exact optimal state and control variables were calculated using MATLAB with $30$ digits of precision maintained in internal computations. The fmincon solver was carried out using the stopping criteria TolFun $=$ TolX $= 10^{-12}, 10^{-15}$ for Examples 1 and 2, respectively; similarly, the augmented Lagrange multiplier method was terminated when the lower bound on the change in the augmented Lagrangian function value during a step does not exceed $10^{-12}$. All experiments were conducted using the parameters values $L \in \{0.25(0.25)10\}$ and $\alpha \in \{-0.4(0.1)2\}$. Most of the numerical simulations were performed using the two sets of initial guesses $\F{\Omega}_1 = \{(\tbmx_{0}, \tbmu_{0}): \tbmx_{0}= \bmone_{n_x}, \tbmu_{0} = \bmone_{n_u}\}$ and $\F{\Omega}_2 = \{(\tbmx_{0}, \tbmu_{0}): \tbmx_{0}= 0.5 \bmone_{n_x}, \tbmu_{0} = 0.5 \bmone_{n_u}\}$; henceforth, $\F{\Omega}_1 \cup \F{\Omega}_2$ is denoted by $\F{\Omega}$. Furthermore, by $AE_{J}$ and $MAE_{x,u}$, we mean the absolute error in the objective function value and the maximum absolute error of the state and control variables in respective order.\\

\noindent \textbf{Example 1.} Consider the IHOC \eqref{eq:2}–\eqref{eq:1} with $g\left(\bmx(t),\bmu(t)\right)=\left(\ln^{2} x(t) + u^{2}(t)\right)/2, f\left(\bmx(t),\bmu(t)\right)=x(t) \ln{x(t)}+x(t) u(t)$, and $\bmx_{0}=2$. The exact state and control variables are
\begin{subequations}
\begin{align}
x^{*}(t) &= \exp\left({y^{*}(t)}\right),\label{eq:exact_state}\\
u^{*}(t) &= -\left(1+\sqrt{2}\right) y^{*}(t),\label{eq:exact_control}
\end{align}
 where
\begin{equation}
 y^{*}(t) = \left(\ln{2}\right) \exp\left({-\sqrt{2} t}\right),
\label{eq:where}
\end{equation}
\end{subequations}
cf. \cite{garg2011pseudospectral}. The exact cost function ${J^*} = {(\ln 2)^2}{\mkern 1mu} \left( {\sqrt 2  + 1} \right)/2 \approx 0.5799580911421756$ rounded to $16$ significant digits. Through the change of variables 
\begin{equation}\label{eq:clever1}
z(t) = \ln{x(t)}, 
\end{equation}
the IHOC \eqref{eq:2}–\eqref{eq:1} can be rewritten in an equivalent linear-quadratic optimal control problem in an infinite horizon with $g\left(\bmz(t),\bmu(t)\right) = \left(z^2(t) + u^{2}(t)\right)/2, f\left(\bmz(t),\bmu(t)\right)=z(t) +u(t)$, and $\bmz_{0} = \ln 2$. We refer to the former and latter forms of the IHOC by Forms A and B, respectively. Form A of the example was previously solved by \citet{garg2011pseudospectral} using LG- and LGR-PS methods and the three maps \eqref{eq:fahroo2008p} and \eqref{garg2011a}; the obtained NLPs were solved using SNOPT \cite{doiS1052623499350013,gill2005snopt}. Table \ref{table6} shows a comparison between the LGR- and LG-PS methods and the GGR-IPS12-EALMM using the same initial guesses $\tx(\tau) = 2$ and $\tu(\tau) = \tau\,\forall \tau \in [-1, 1]$. Notice how the GGR-IPS2-EALMM generally enjoy superior stability properties and achieve higher-order approximations in this example for $n = 5(5)30$ compared with the other approaches, except for the LG-PS method, where they both achieve the same order of accuracy at $n = 30$. It is interesting here to recognize how the GGR-IPS2-EALMM defeats the LG-PS method for $n = 5(5)25$, although the latter employs a Gauss quadrature that is more accurate than the GR quadrature used by the former. One may connect the success of the GGR-IPS2-EALMM here to many reasons, namely (i) the clever change of variables \eqref{eq:clever1} that converts the NLP into a linear-quadratic optimal control problem which can be collocated more accurately, (ii) the integral form of the system dynamics allows for gaining more digits of accuracy via numerical quadratures which are well-known for their numerical stability, (iii) the highly-accurate built-in Algorithm \ref{algorithm:2} to the current methods which applies the latest technology of SR-interpolation, (iv) the parametric logarithmic map $T_{2,L}^{(\alpha)}$ that is favored over $T_{1,L}^{(\alpha)}$ for its slower growth and less sensitivity near $\tau = 1$, and (v) the map scaling parameter $L$ which permits for faster convergence rates when \textit{``optimally''} chosen. On the other hand, we observe that the errors of GGR-IPS12-EALMM generally decline gradually as the mesh grid size initially grow up to a certain limit, yet they bounce back beyond that limit as the mesh grid size continues to grow large in agreement with the theoretical results of Section \ref{sec:errb}. It is interesting to see similar phenomena with the control error profiles in \cite{garg2011pseudospectral} in the sense that (i) the control error plot of the LG-PS method does not appear as a (near) straight line in the shown log-scaled chart but rather a convex-shaped curve, as it curves outward; cf. \citet[Fig. 1(b)]{garg2011pseudospectral}, and (ii) the control error plot of the LGR-PS method suddenly increases at $n = 30$ much earlier before reaching the round-off plateau; cf. \citet[Fig. 2(b)]{garg2011pseudospectral}. Another interesting remark lies in the smallest errors of the current methods; they were all recorded at/near $\alpha = 0.5$ with $\alpha \in \Upsilon_{0.5,0.6}^G$, while several optimal values of $L$ were detected. Figures \ref{fig1} and \ref{fig2} show the plots of the exact state and control variables in addition to their collocated solutions and absolute errors obtained by the GGR-IPS12 methods integrated with three NLP solvers using the same initial guesses set and several values of $n, L$, and $\alpha$. We can observe from the shown graphical data that the GGR-IPS2-EALMM generally achieves better accuracy and stability properties compared with the other methods. 

\begin{table}[h]
\centering
\caption{The uncertainty intervals of the smallest $MAE_{x,u}$ obtained by the LGR- and LG-PS methods of \citet{garg2011pseudospectral} at the collocation points and the corresponding smallest $MAE_{x,u}$ obtained by the GGR-IPS12-EALMM using the same initial guesses $\tx(\tau) = 2$ and $\tu(\tau) = \tau\,\forall \tau \in [-1, 1]$. All approximations were rounded to $5$ significant digits.} 
\begin{tabular}{*{5}{c}}
\toprule
& LGR-PS \cite{garg2011pseudospectral} & LG-PS \cite{garg2011pseudospectral} & GGR-IPS1-EALMM  & GGR-IPS2-EALMM \\
& \multicolumn{2}{c}{Form A} & \multicolumn{2}{c}{Form B} \\
$n$ & \multicolumn{2}{c}{$MAE_{x,u}$ uncertainty interval} & $MAE_{x,u}/\alpha/L$ & $MAE_{x,u}/\alpha/L$\\
\midrule
5   &   (1e-03, 1e-02) & (1e-04, 1e-03) & 5.3830e-03/0.6/2.25 & 4.2453e-05/0.5/3.5 \\
10  &  (1e-06, 1e-05) & (1e-06, 1e-05) & 7.0615e-05/0.5/5.75 & 1.8735e-09/0.5/4.25 \\
15  &  (1e-07, 1e-06) & (1e-07, 1e-06) & 6.0288e-07/0.5/9.25 & 1.9736e-09/0.5/5 \\
20  &  (1e-08, 1e-07) & (1e-08, 1e-07) & 1.3181e-08/0.5/8.5 & 2.0583e-09/0.5/2.5 \\
25  &  (1e-08, 1e-07) & (1e-08, 1e-07) &  2.4368e-08/0.5/2.25 & 1.6175e-09/0.5/3 \\  
30  &  (1e-08, 1e-07) & (1e-09, 1e-08) & 2.7958e-08/0.5/2.75 & 3.6927e-09/0.5/2 \\  
\bottomrule
\end{tabular}
\label{table6}   
\end{table}

\begin{figure}[ht]
\centering
\includegraphics[scale=0.35]{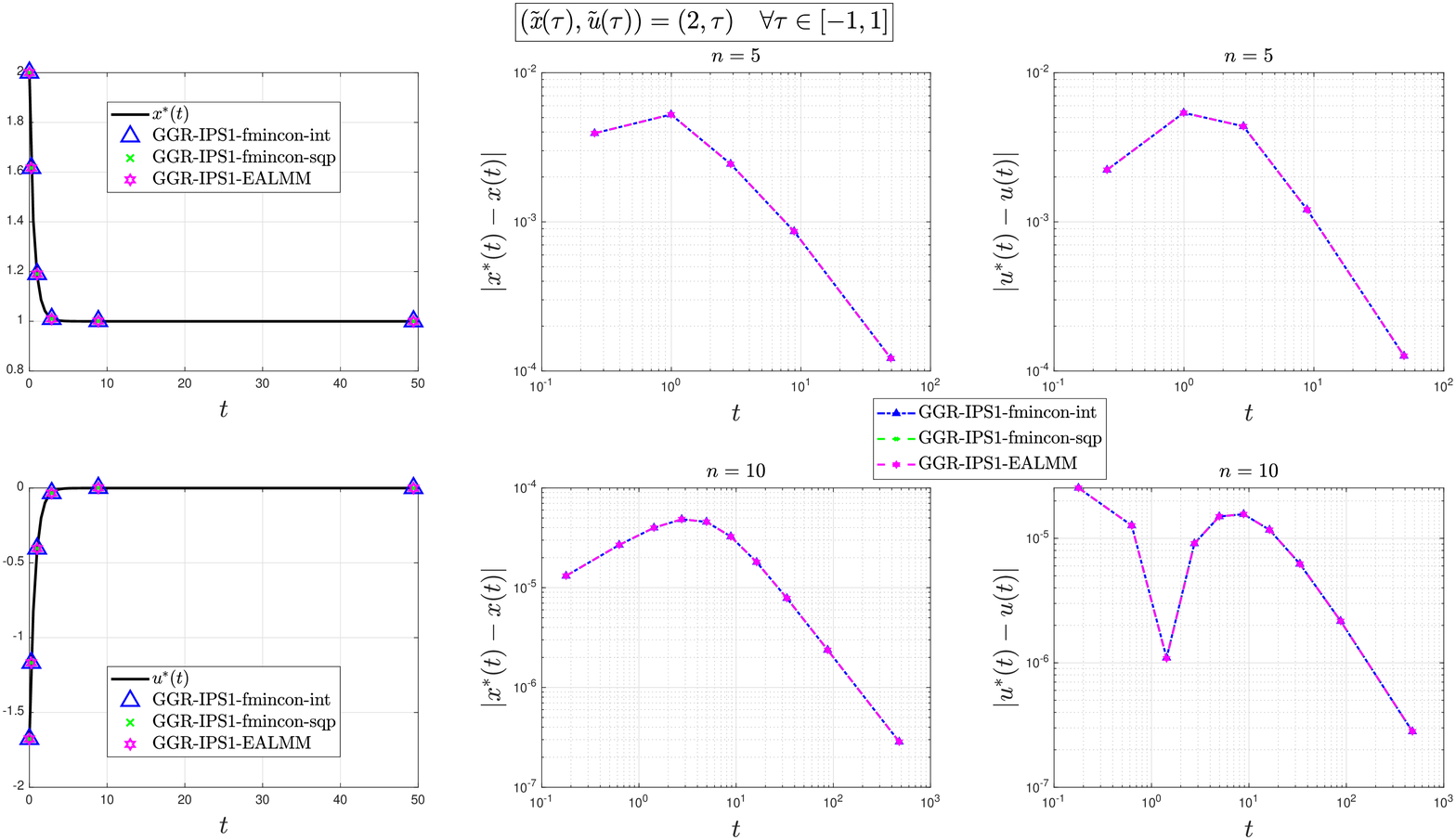}
\caption{The first column shows the plots of the exact optimal state and control variables of Example $1$ and the collocated solutions obtained by GGR-IPS1 method integrated with three distinct NLP solvers at the collocation points using $n = 5$ and the same initial guesses $\tx(\tau) = 2$ and $\tu(\tau) = \tau\,\forall \tau \in [-1, 1]$. The exact optimal state and control plots were generated using $101$ linearly spaced nodes from $0$ to $50$. The middle and last columns show the corresponding plots of the absolute errors of the state and control variables in log-log scale using $n = 5, 10$ and $(L,\alpha)$ ordered pairs as shown in Table \ref{table6}.}
\label{fig1}
\end{figure}

\begin{figure}[ht]
\centering
\includegraphics[scale=0.35]{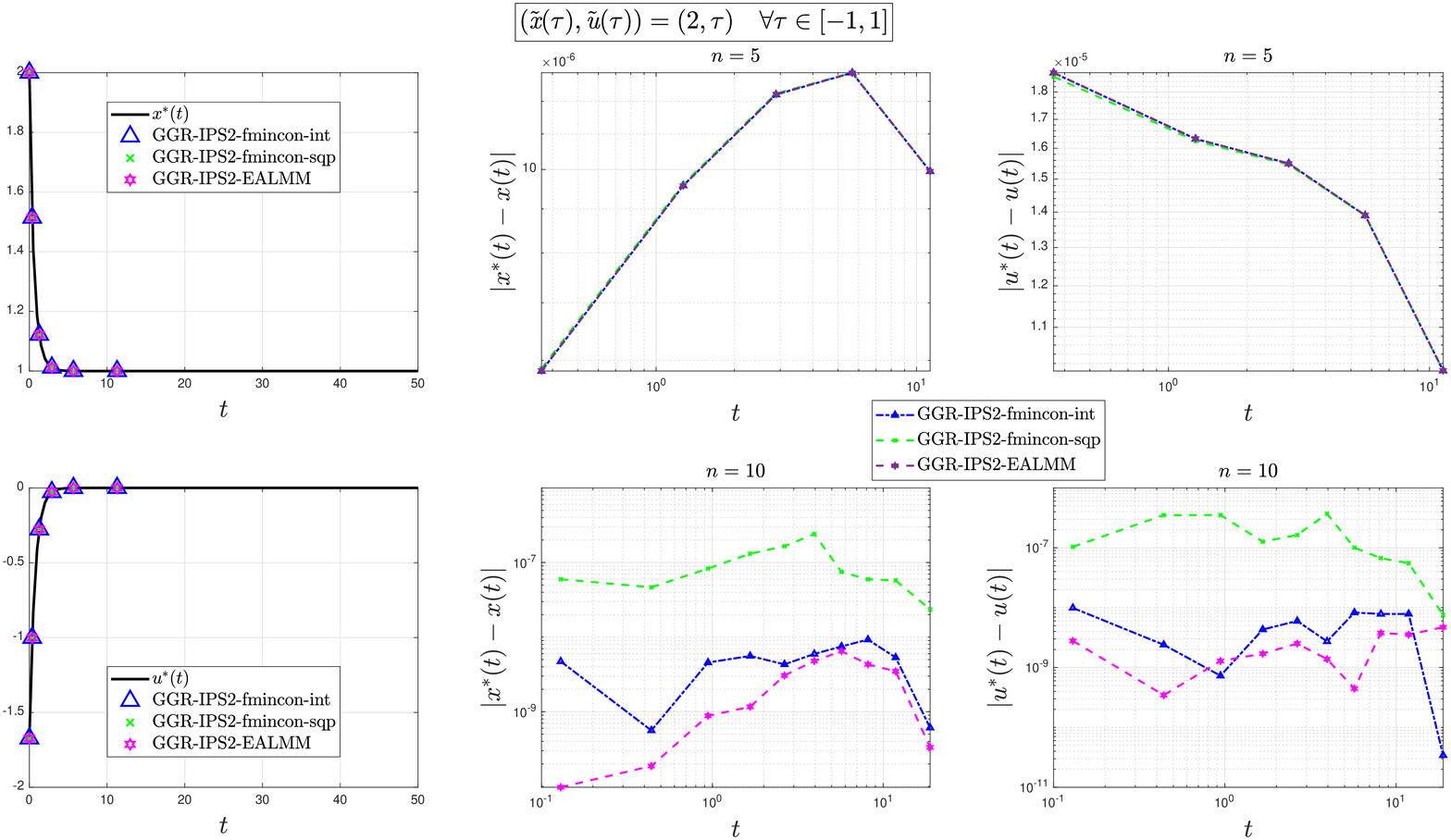}
\caption{The first column shows the plots of the exact optimal state and control variables of Example $1$ and the collocated solutions obtained by GGR-IPS2 method integrated with three distinct NLP solvers at the collocation points using $n = 5$ and the same initial guesses $\tx(\tau) = 2$ and $\tu(\tau) = \tau\,\forall \tau \in [-1, 1]$. The exact optimal state and control plots were generated using $101$ linearly spaced nodes from $0$ to $50$. The middle and last columns show the corresponding plots of the absolute errors of the state and control variables in log-log scale using $n = 5, 10$ and $(L,\alpha)$ ordered pairs as shown in Table \ref{table6}.}
\label{fig2}
\end{figure}

Figures \ref{fig3} and \ref{fig4} show the $MAE_{x,u}$ of the GGR-IPS1-EALMM using $n=10(10)50, \alpha=-0.4(0.1)2, L=1(1)4$, and $(\tbmx_{0}, \tbmu_{0}) \in \F{\Omega}$. It is interesting to observe here by visual inspection that, when holding $L$ fixed, the global minima of the error mesh surface plots occur near $\alpha = 0.5$ and the mesh surfaces rise up gradually as we move away, except when $\alpha \in \{-0.4,-0.3\}$, where sharp peaks may emerge suddenly for growing values of $n$. This suggests that Legendre polynomials seem an optimal choice among Gegenbauer basis polynomials when holding $L$ fixed, while Gegenbauer polynomials associated with $\alpha$-values near $-0.5$ may cause numerical instability as $n$ grows large. However, a different story emerges when the GGR-IPS2-EALMM is performed instead as can be seen from Figures \ref{fig3trans2} and \ref{fig4trans2}. Notice now that the errors \textit{``look''} monotonically decreasing for decreasing values of $\alpha$ when holding $L$ fixed at $1$ and $2$, and the error surface shoots up as $n$ and $\alpha$ increases. Therefore, Gegenbauer polynomials with some negative $\alpha$-values \textit{``seem''} optimal for relatively small values of $L$. Notice also that the sudden peaks observed before with the GGR-IPS1-EALMM in Figures \ref{fig3} and \ref{fig4} for $\alpha \in \{-0.4, -0.3\}$ and large $n$ values disappear. A further array of error mesh surface plots of the GGR-IPS1-EALMM are shown in Figures \ref{fig5} and \ref{fig5new} for $n=10(10)50, \alpha=-0.2,0,0.5,1, L=0.5(0.5)6$, and $(\tbmx_{0}, \tbmu_{0}) \in \F{\Omega}$. While holding $\alpha$ fixed, there seems no general rule of thumb can be laid down from the shown data. On the other hand, Figures \ref{fig5trans2} and \ref{fig5newtrans2} show the corresponding plots associated with the GGR-IPS2-EALMM, where the errors are very similar and can be clearly seen to surge as $L \to 0$, especially when $\alpha = 1$, but remain relatively small for $L  = 2(0.5)6$.

\begin{figure}[h]
\centering
\includegraphics[scale=0.37]{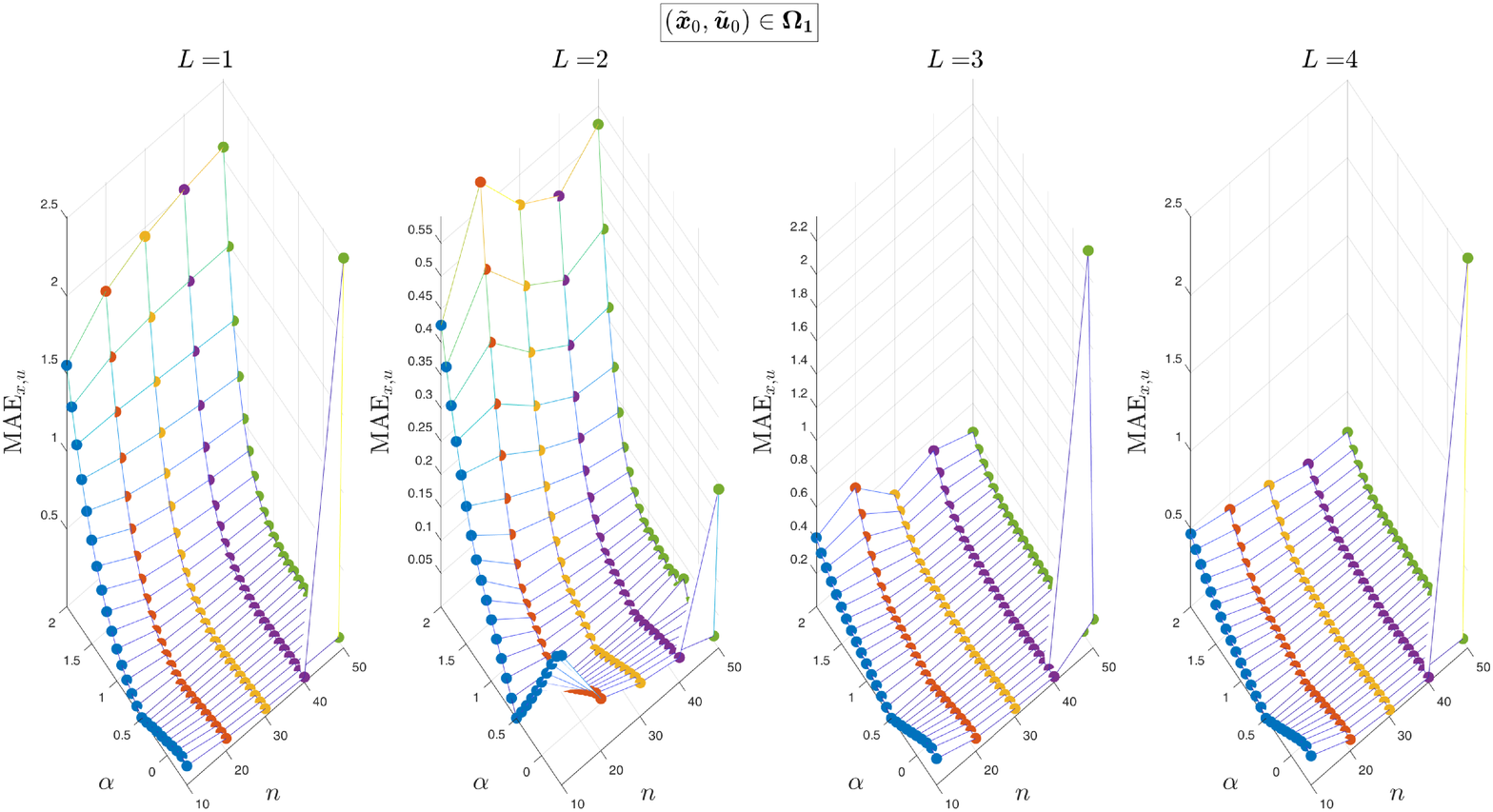}
\caption{The $MAE_{x,u}$ of the GGR-IPS1-EALMM at $101$ linearly spaced nodes between $0$ and $10$ using $n=10(10)50, \alpha=-0.4(0.1)2, L=1(1)4$, and $(\tbmx_{0}, \tbmu_{0}) \in \F{\Omega}_1$.} 
\label{fig3}
\end{figure}

\begin{figure}[h]
\centering
\includegraphics[scale=0.37]{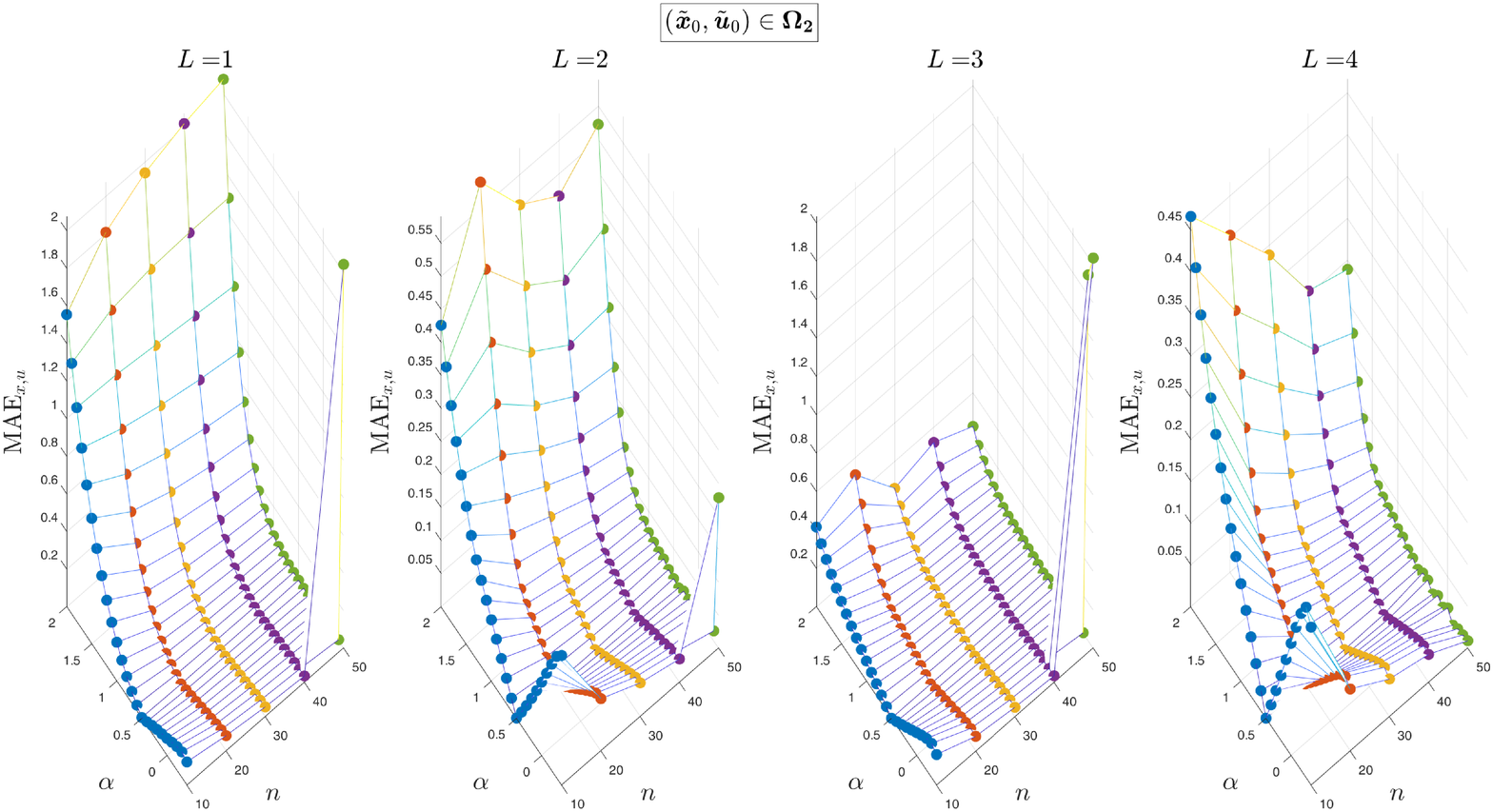}
\caption{The $MAE_{x,u}$ of the GGR-IPS1-EALMM at $101$ linearly spaced nodes between $0$ and $10$ using $n=10(10)50, \alpha=-0.4(0.1)2, L=1(1)4$, and $(\tbmx_{0}, \tbmu_{0}) \in \F{\Omega}_2$.} 
\label{fig4}
\end{figure}

\begin{figure}[h]
\centering
\includegraphics[scale=0.37]{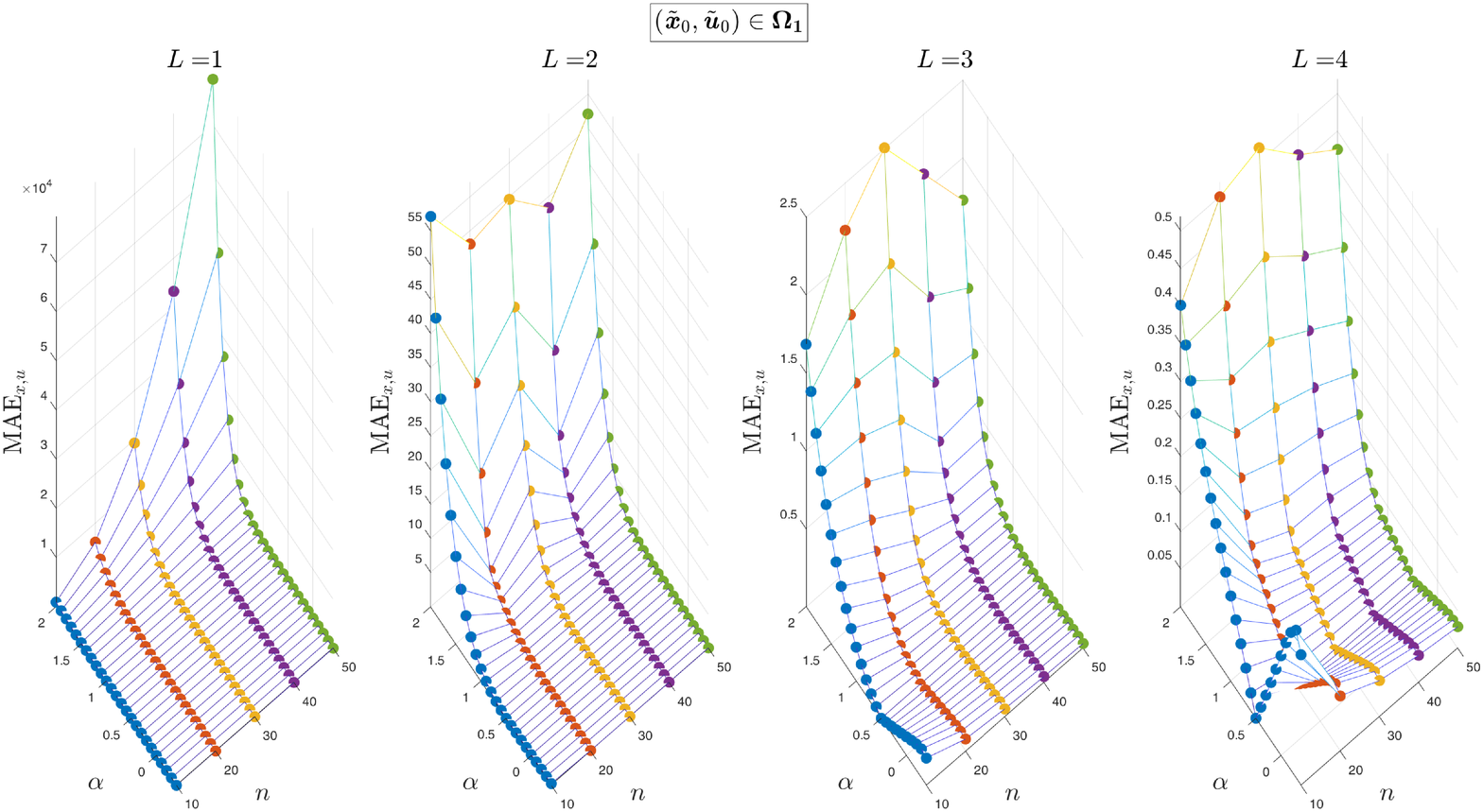}
\caption{The $MAE_{x,u}$ of the GGR-IPS2-EALMM at $101$ linearly spaced nodes between $0$ and $10$ using $n=10(10)50, \alpha=-0.4(0.1)2, L=1(1)4$, and $(\tbmx_{0}, \tbmu_{0}) \in \F{\Omega}_1$.} 
\label{fig3trans2}
\end{figure}

\begin{figure}[h]
\centering
\includegraphics[scale=0.37]{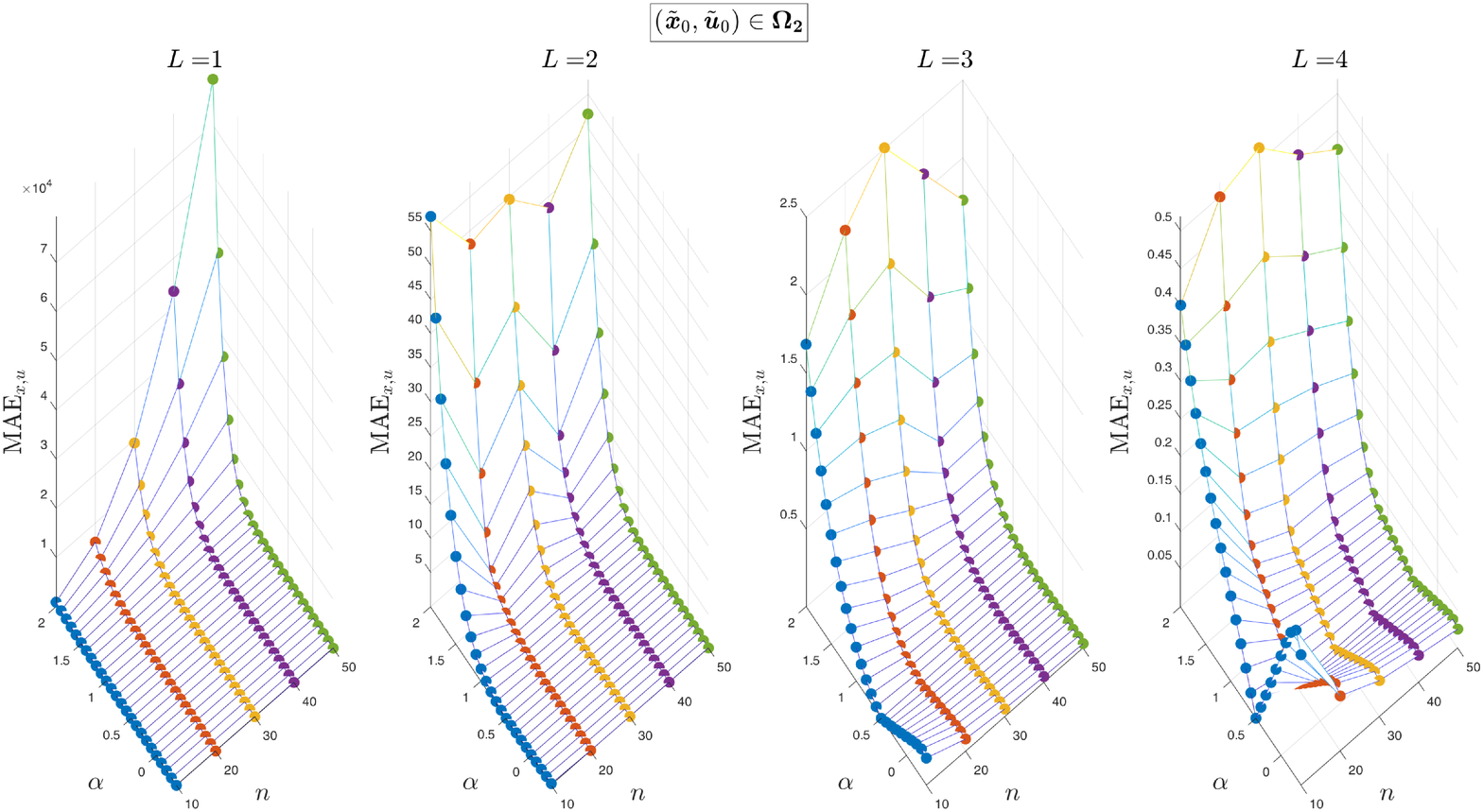}
\caption{The $MAE_{x,u}$ of the GGR-IPS2-EALMM at $101$ linearly spaced nodes between $0$ and $10$ using $n=10(10)50, \alpha=-0.4(0.1)2, L=1(1)4$, and $(\tbmx_{0}, \tbmu_{0}) \in \F{\Omega}_2$.} 
\label{fig4trans2}
\end{figure}

\begin{figure}[h]
\centering
\includegraphics[scale=0.37]{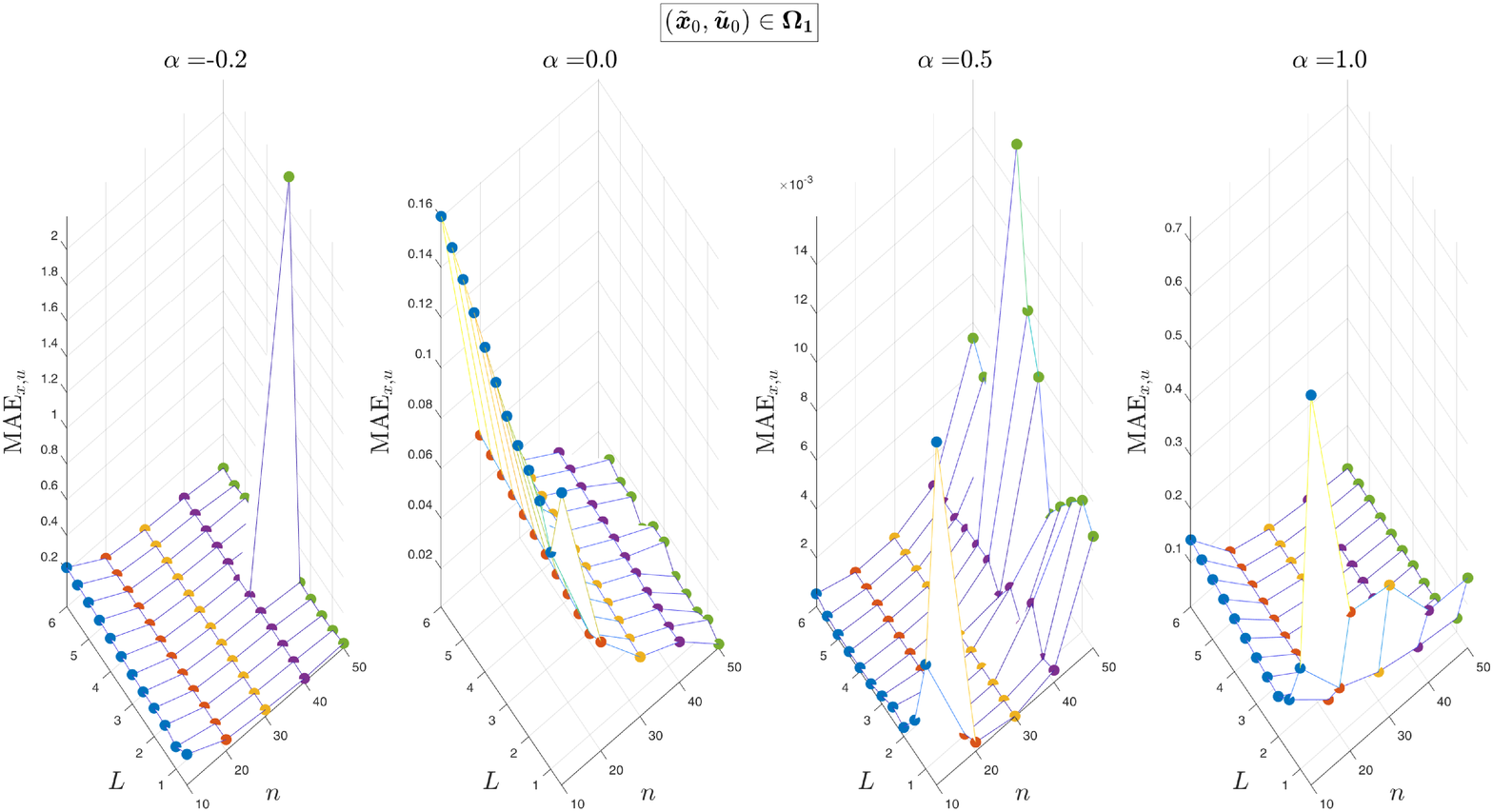}
\caption{The $MAE_{x,u}$ of the GGR-IPS1-EALMM at $101$ linearly spaced nodes between $0$ and $10$ using $n=10(10)50, \alpha=-0.2,0,0.5,1, L=0.5(0.5)6$, and $(\tbmx_{0}, \tbmu_{0}) \in \F{\Omega}_1$.} 
\label{fig5}
\end{figure}

\begin{figure}[h]
\centering
\includegraphics[scale=0.37]{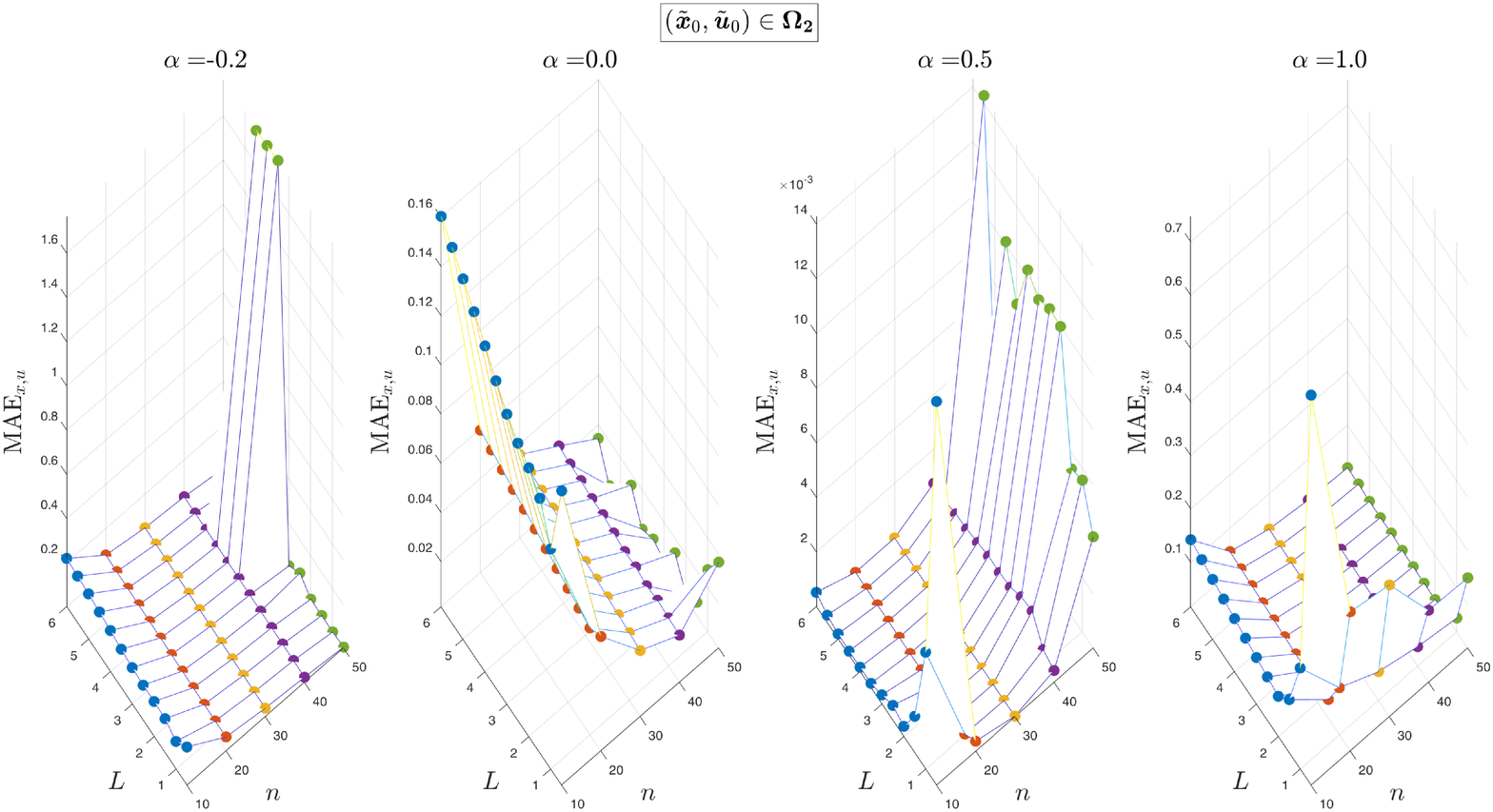}
\caption{The $MAE_{x,u}$ of the GGR-IPS1-EALMM at $101$ linearly spaced nodes between $0$ and $10$ using $n=10(10)50, \alpha=-0.2,0,0.5,1, L=0.5(0.5)6$, and $(\tbmx_{0}, \tbmu_{0}) \in \F{\Omega}_2$.} 
\label{fig5new}
\end{figure}

\begin{figure}[h]
\centering
\includegraphics[scale=0.37]{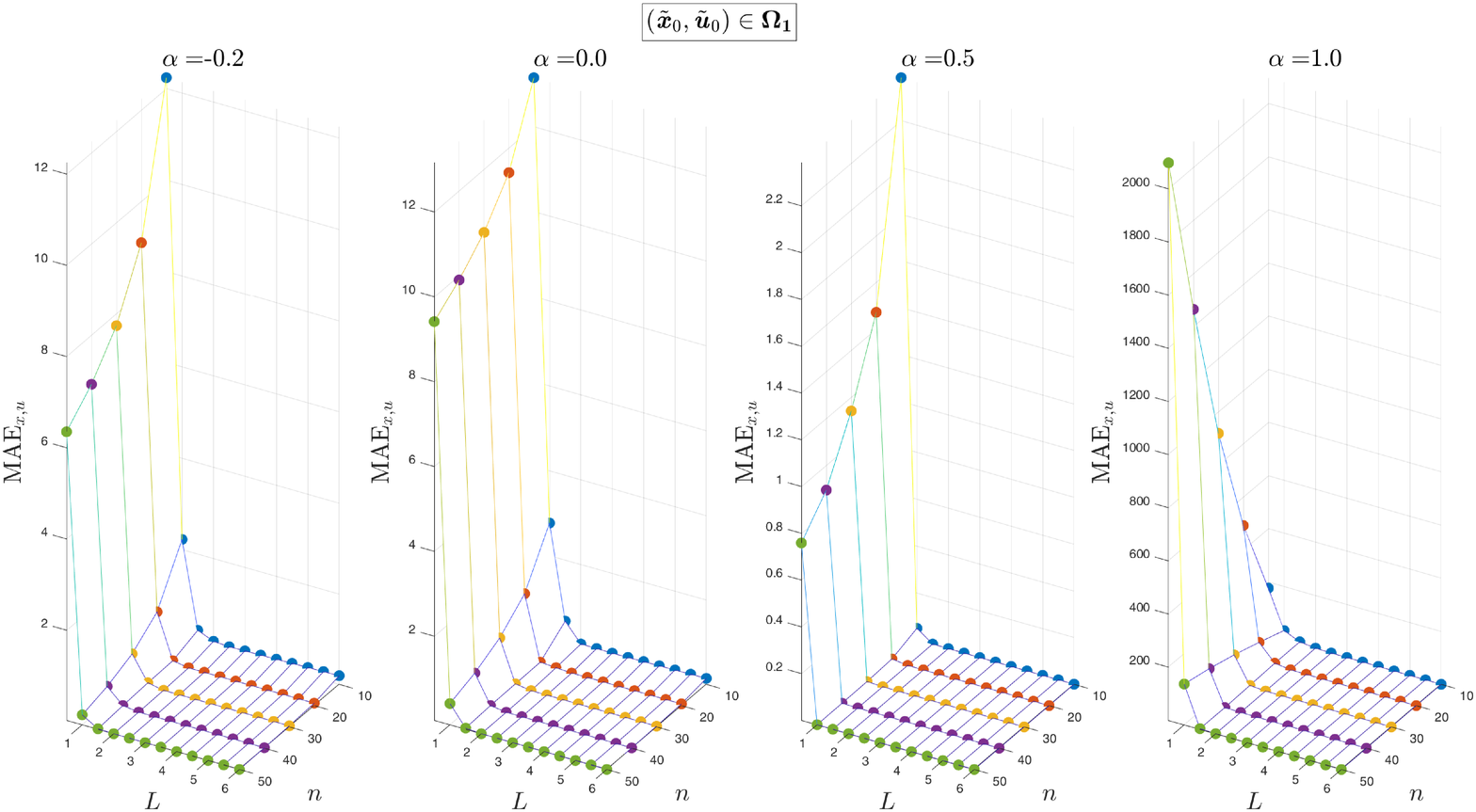}
\caption{The $MAE_{x,u}$ of the GGR-IPS2-EALMM at $101$ linearly spaced nodes between $0$ and $10$ using $n=10(10)50, \alpha=-0.2,0,0.5,1, L=0.5(0.5)6.5$, and $(\tbmx_{0}, \tbmu_{0}) \in \F{\Omega}_1$.} 
\label{fig5trans2}
\end{figure}

\begin{figure}[h]
\centering
\includegraphics[scale=0.37]{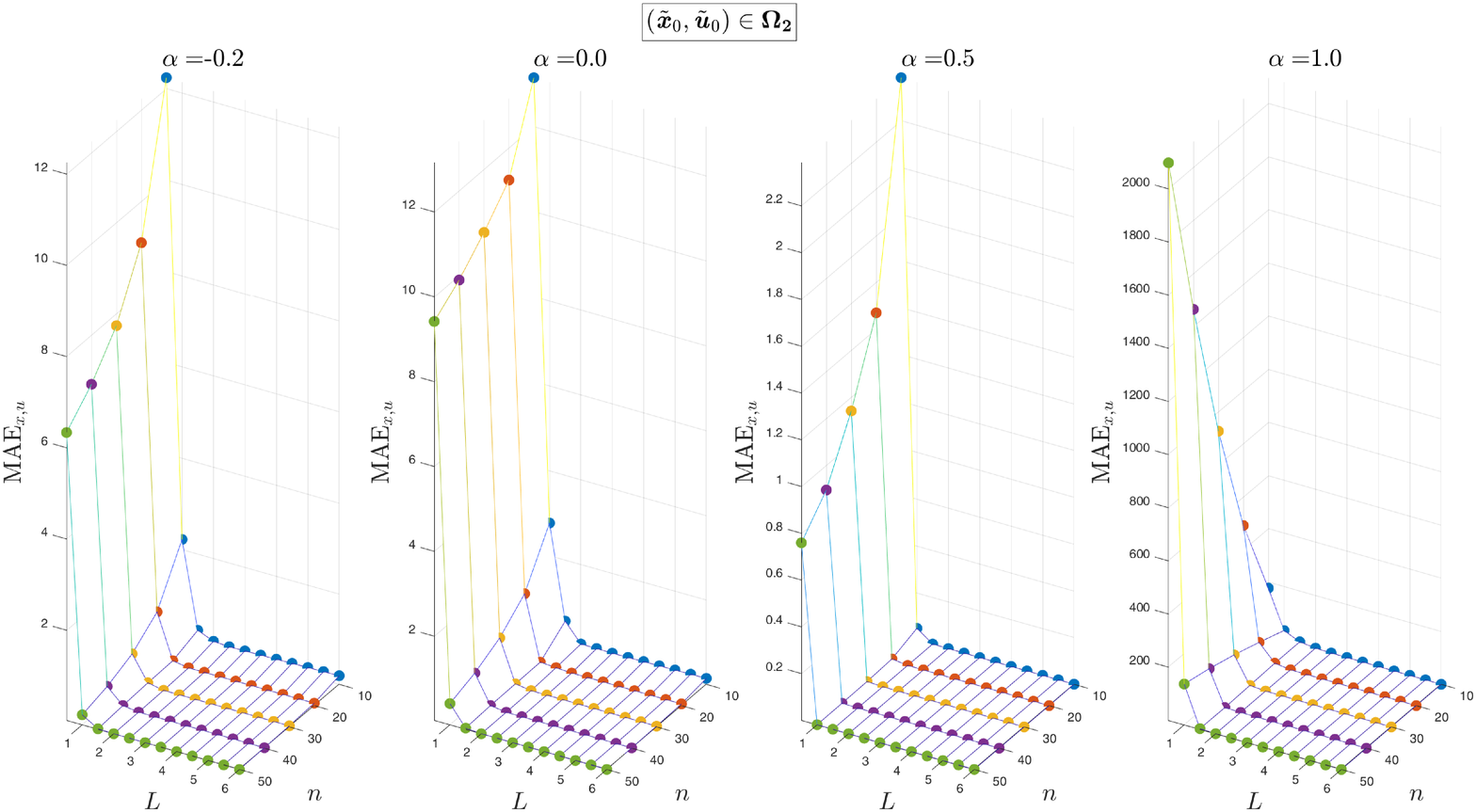}
\caption{The $MAE_{x,u}$ of the GGR-IPS2-EALMM at $101$ linearly spaced nodes between $0$ and $10$ using $n=10(10)50, \alpha=-0.2,0,0.5,1, L=0.5(0.5)6.5$, and $(\tbmx_{0}, \tbmu_{0}) \in \F{\Omega}_2$.} 
\label{fig5newtrans2}
\end{figure}

Figures \ref{fig6} and \ref{fig7} show comparisons of the number of iterations required by the GGR-IPS12 methods when combined with three distinct NLP solvers using $(\tbmx_{0}, \tbmu_{0}) \in \F{\Omega}$ and several $L$- and $\alpha$-values. Clearly, the integration of the GGR-IPS12 methods with the EALMM leads to a drastic reduction in the number of iterations in all cases. In fact, while the GGR-IPS12-EALMM often converged in only four/five iterations, other methods usually require many more iterations to converge; for example, the GGR-IPS12 methods combined with fmincon-int and fmincon-sqp took more than $200$ iterations to converge to the solutions of the problem when $(n, L, \alpha) = (48, 1, -0.2)$ starting with any initial guess $(\tbmx_{0}, \tbmu_{0}) \in \F{\Omega}$.

\begin{figure}[h]
\centering
\includegraphics[scale=0.37]{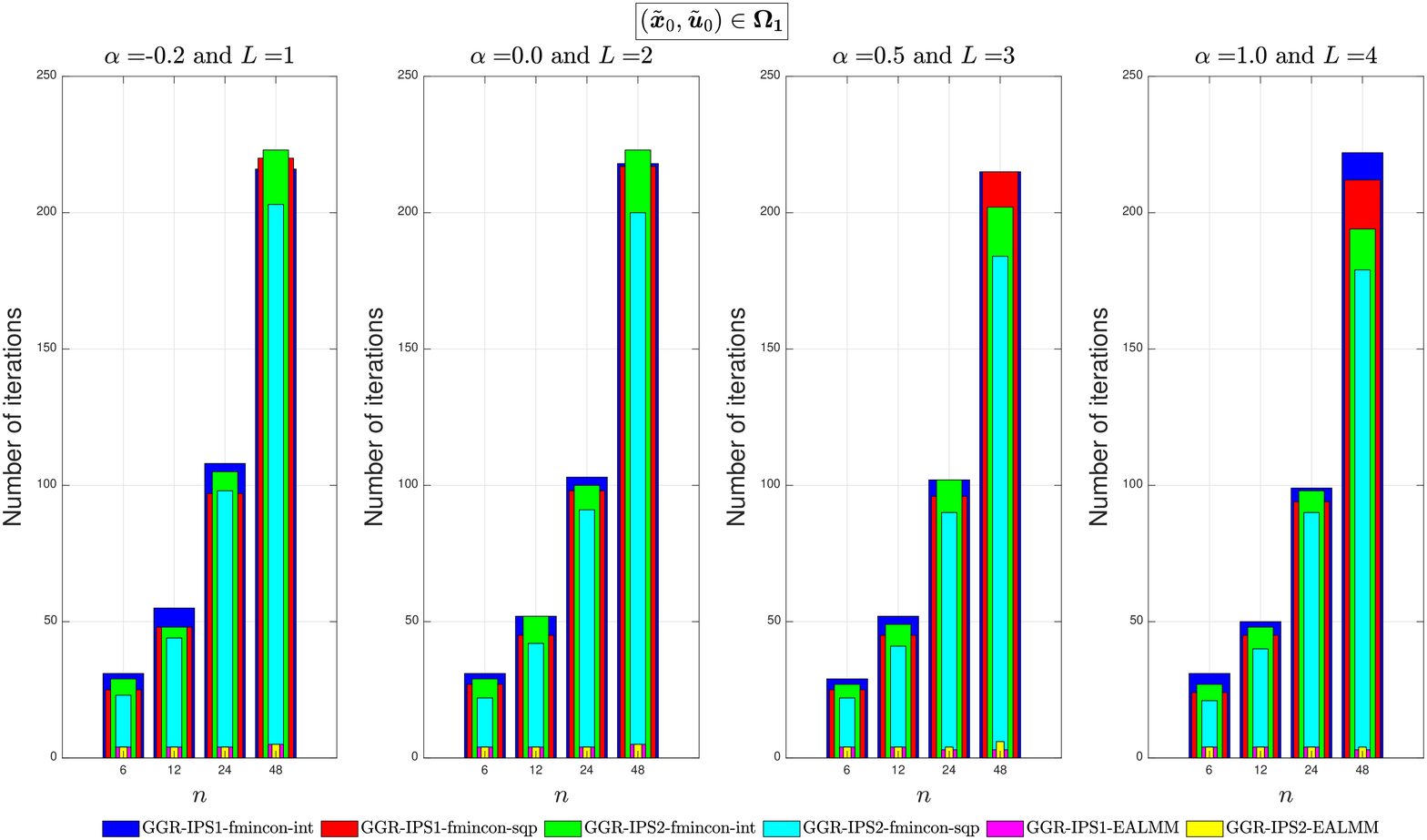}
\caption{The number of iterations required by the GGR-IPS12 methods performed with three distinct NLP solvers versus $n$, for $(\alpha,L) = (-0.2,1), (0,2), (0.5,3), (1,4)$, and $(\tbmx_{0}, \tbmu_{0}) \in \F{\Omega}_1$.} 
\label{fig6}
\end{figure}

\begin{figure}[h]
\centering
\includegraphics[scale=0.37]{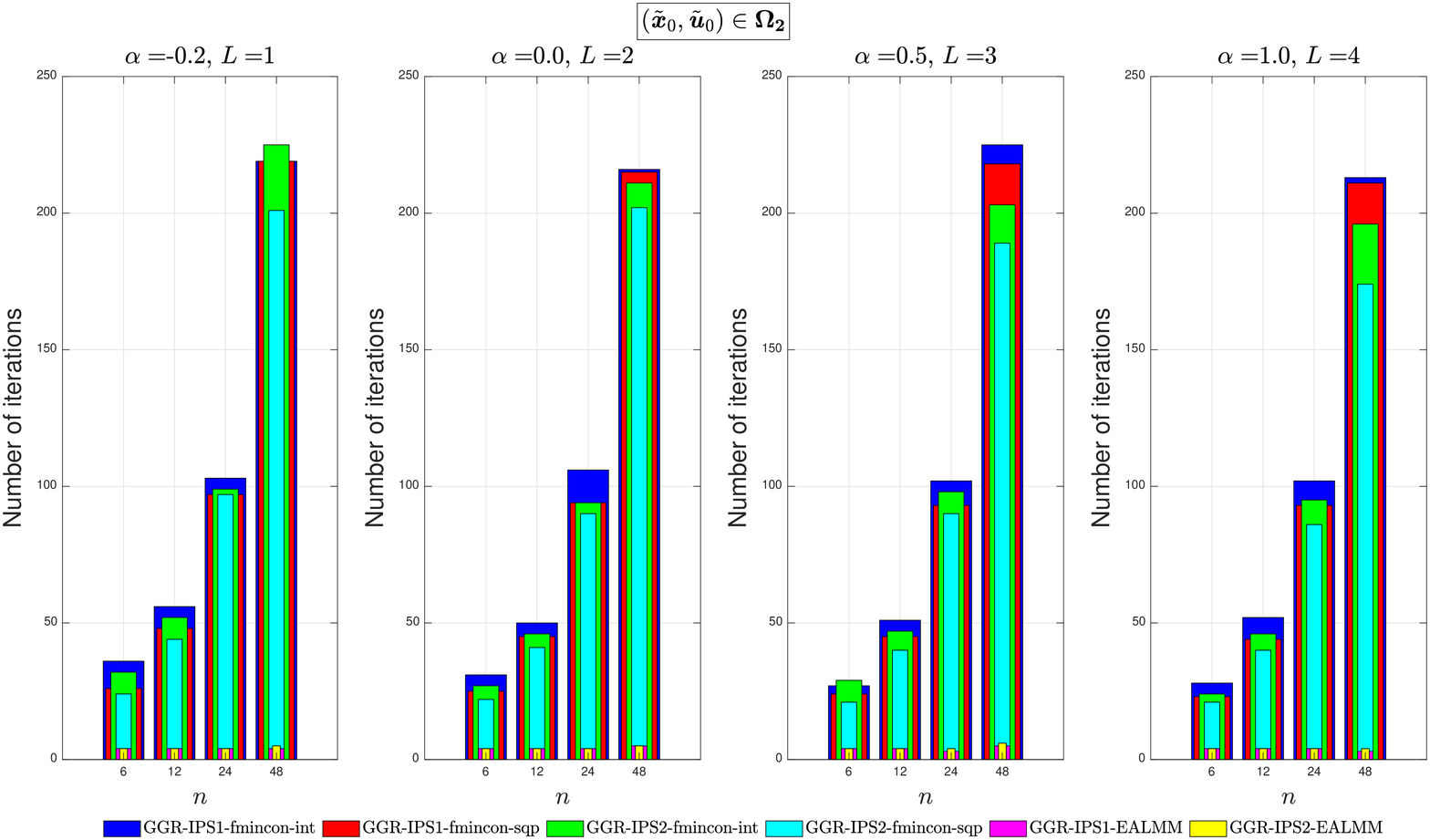}
\caption{The number of iterations required by the GGR-IPS12 methods performed with three distinct NLP solvers versus $n$, for $(\alpha,L) = (-0.2,1), (0,2), (0.5,3), (1,4)$, and $(\tbmx_{0}, \tbmu_{0}) \in \F{\Omega}_2$.}
\label{fig7}
\end{figure}

Table \ref{table1} shows the approximate cost function values obtained by the GGR-IPS2-EALMM for several parameter values. The fastest convergence was recorded at $\alpha = 0.5$ in all cases with $J \approx J_{16} = 0.579958091127$ in agreement with $J$ to $10$ significant digits. It is interesting to see through the tabulated data how Gegenbauer polynomials with $\alpha \in \{-0.4, 0.25\}$ exhibit faster convergence rates than Chebyshev polynomials (when $\alpha = 0$) capturing $4$ correct significant digits as early as $n = 12$, whereas Chebyshev polynomials are still lagging behind by one digit even when $n$ increases by $2$ units. Gegenbauer polynomials with $\alpha = -0.2$ also performed better than Chebyshev polynomials for $n \in \{14, 16\}$, while the poorest stability was that of Gegenbauer polynomials with $\alpha = 2$ scoring only one correct significant digit in all cases! Table \ref{table2} shows the smallest $MAE_{x,u}$ obtained by the GGR-IPS12-EALMM among the recorded errors for the parameter values $n = 10(10)80, \alpha = -0.4(0.1)2, L = 0.25(0.25)10$, and $(\tbmx_{0}, \tbmu_{0}) \in \F{\Omega}$. The table shows the capacity of the GGR-IPS12-EALMM to achieve improved near-optimal solutions for increasing values of $n$ within a small/medium range of mesh grid size; however, the accuracy deteriorates beyond a certain limit as the mesh grid size grows larger in agreement with the theoretical results proven in Section \ref{sec:errb}. The GGR-IPS2-EALMM is clearly superior to the GGR-IPS1-EALMM in terms of accuracy in all cases, and it is interesting here to see how the GGR-IPS1-EALMM diverges faster than the GGR-IPS2-EALMM for growing mesh sizes as prophesied earlier by the divergence analysis presented in Section \ref{subsec:DOTPCS}. The best approximations obtained experimentally by the GGR-IPS2-EALMM were recorded at/near $\alpha = 0.5$ with $\alpha \in \Upsilon_{0.4,0.6}^G$. The smallest errors of the GGR-IPS1-EALMM were also recorded at/near $\alpha = 0.5$ for $n = 5(5)55$ with $\alpha \in \Upsilon_{0.4,0.6}^G$; however, the algorithm tends to favor larger positive $\alpha$-values beyond $n = 55$, where we noticed the travel of the right boundary, $\amax$, of $\Upsilon_{0.4,\amax}^G$ rightward from $\amax = 0.6$ into $\amax = 1.8$ as $n$ reaches $80$. This is no surprise! In fact, recall that $T_{1,L}^{(\alpha)}$ increases monotonically for decreasing values of $\alpha$ while holding $n$ and $L$ fixed, and we can observe from Figure \ref{fig:IHtp1} that the mapping escalates wildly as we continue decreasing the $\alpha$-values. The byproduct of this behavior is that increasing the $\alpha$-values moves the collocation points associated with large values of $t$ leftward and relocate them closer to regions where the solution changes rapidly. This graphical interpretation consents with the fact that the interior GGR points move monotonically toward the center of the interval $(-1, 1)$ as the parameter $\alpha$ increases; cf. \cite{szeg1939orthogonal,elgindy2018optimal}. From another perspective, the leftward movement of the collocation points near the right boundary $\tau = 1$ mitigates the effect of the ill-conditioning of $T'_{1,L}$ for arguments near $1$, as $T'_{1,L}$ is evaluated at mesh points that are gradually departing the vicinity of $\tau = 1$. This argument adds more tenability for using Gegenbauer polynomials as a basis polynomials for numerical collocations of FHOCIs obtained from IHOCs via $T_{1,L}^{(\alpha)}$ or $T_{2,L}^{(\alpha)}$ in the sense that, while Chebyshev and Legendre polynomials cease to downgrade the errors as the mesh grid size grows large, Gegenbauer polynomials has the additional advantage to alleviate the growth rates of both $T_{1,L}^{(\alpha)}$ and $T_{2,L}^{(\alpha)}$ by increasing the $\alpha$-value whenever we wish while sustaining the luxury to apply either Chebyshev or Legendre polynomials with a push of a button: simply set $\alpha = 0$ or $0.5$ in the solver code! If we now turn our attention to the recorded $L$-values in the table, we can quickly spot that the smallest computed errors initially spans a wide range of numerically optimal $L$-values; however, the solvers ultimately have a bias towards smaller values of $L$ in attempt to damp the error in agreement with the forward error analysis presented in Section \ref{sec:errb}. Notice that the numerically optimal $L$-value for the GGR-IPS2-EALMM stays at $0.75$ for $n = 55(5)80$, while the corresponding values for the GGR-IPS1-EALMM occur at the smallest feasible $L$-value among the input range of experimental data. One may pin this peculiar behavior of the solvers to the fact that the logarithmic map $T_{2,L}^{(\alpha)}$ increases at a much slower rate than that of the algebraic map $T_{1,L}^{(\alpha)}$; cf. Figures \ref{fig:IHtp1} and \ref{fig:IHtp2}.
 
\begin{table}[ht]
\centering
\caption{The approximate cost function values obtained by the GGR-IPS2-EALMM for $(n,L) = (6,1), (8,2), (10,3), (12,4), (14,5), (16,6)$, and $\alpha=-0.4,-0.2,0,0.25,0.5,1,2$. All approximations were rounded to $12$ significant digits.} 
\resizebox{\columnwidth}{!}{%
\begin{tabular}{|c|c|*{7}{c}|}
\hline
 $n$ & $L$ & $\alpha=-0.4$ & $\alpha=-0.2$& $ \alpha=0$& $ \alpha=0.25$& $ \alpha=0.5$& $\alpha=1$ & $\alpha=2$\\\hline
6 & 1 & 0.579809073360 & 0.579627701619 & 0.579622685738 & 0.579789248084 & 0.579949642114 & 0.577727846201 & 0.503481602677\\ 
8 & 2 & 0.579848669619 & 0.579713782304 & 0.579730070685 & 0.579859689930 & 0.579958090977 & 0.578484510558 & 0.522640769897\\ 
10 & 3 & 0.579893894832 & 0.579797498143 & 0.579802788432 & 0.579889201985 & 0.579958091142 & 0.578845602933 & 0.530047022566\\
12 & 4 & 0.579918900010 & 0.579850348844 & 0.579850444796 & 0.579908959905 & 0.579958091143 & 0.579089227180 & 0.534636030121\\ 
14 & 5 & 0.579933051361 & 0.579883424648 & 0.579881314693 & 0.579922126600 & 0.579958091151 & 0.579263254798 & 0.538003914873\\ 
16 & 6 & 0.579941397724 & 0.579904638260 & 0.579901719586 & 0.579931066922 & 0.579958091127 & 0.579391311257 & 0.540689754740\\\hline
\end{tabular}
}
\label{table1}   
\end{table}

\begin{table}[ht]
\centering
\caption{The $MAE_{x,u}$ of the GGR-IPS12-EALMM obtained using $n=5(5)80$ and $(\tbmx_{0}, \tbmu_{0}) \in \F{\Omega}$. All approximations were rounded to $5$ significant digits.} 
\resizebox{\columnwidth}{!}{%
\begin{tabular}{*{13}{c}}
\toprule
& \multicolumn{6}{c}{GGR-IPS1-EALMM} & \multicolumn{6}{c}{GGR-IPS2-EALMM}\\
\cmidrule(lr){2-7} \cmidrule(lr){8-13}
		&          &   $(\tbmx_{0}, \tbmu_{0}) \in \F{\Omega}_1$ &       &      &  $(\tbmx_{0}, \tbmu_{0}) \in \F{\Omega}_2$    &       &   &  $(\tbmx_{0}, \tbmu_{0}) \in \F{\Omega}_1$ &  & & $(\tbmx_{0}, \tbmu_{0}) \in \F{\Omega}_2$   &\\    
\cmidrule(lr){2-4} \cmidrule(lr){5-7} \cmidrule(lr){8-10} \cmidrule(lr){11-13}				
$n$ & $\alpha$ & $L$&     $MAE_{x,u}$  &  $\alpha$  & $L$    & $MAE_{x,u}$ & $\alpha$   &     $L$ & $MAE_{x,u}$ & $\alpha$ & $L$ & $MAE_{x,u}$ \\\hline
5 & 0.6 & 2.25 & 5.3830e-03 & 0.6 & 2.25 & 5.3830e-03 & 0.5 & 3.5 & 4.2456e-05 & 0.5 & 3.5 & 4.2458e-05 \\
10 & 0.5 & 5.75 & 6.9439e-05& 0.5 & 5.75 & 7.0620e-05 & 0.5 & 5.25 & 8.6420e-09& 0.5 & 4.75 & 9.8263e-09 \\ 
15 & 0.5 & 7 & 6.3879e-07 & 0.5 & 7 & 6.7410e-07 & 0.5 & 4 & 6.1199e-09& 0.5 & 3.25 & 5.7262e-09 \\  
20 & 0.5 & 10 & 2.3194e-07 & 0.5 & 9 & 1.4376e-07 & 0.5 & 2.25 & 6.7398e-09 & 0.5 & 2.5 & 8.6293e-09\\ 
25 & 0.5 & 9 & 1.7060e-07 & 0.5 & 5.75 & 1.5636e-07 & 0.5 & 2.25 & 4.4629e-08 & 0.5 & 5.75 & 3.7128e-08\\ 
30 & 0.5 & 2 & 4.9161e-07 & 0.5 & 3.25 & 2.9685e-07 & 0.5 & 1.75 & 6.4234e-08 & 0.5 & 1.75 & 9.7104e-08\\ 
35 & 0.5 & 0.75 & 2.6309e-06 & 0.5 & 0.5 & 5.3234e-06 & 0.5 & 1.75 & 1.0386e-07 & 0.5 & 1.75 & 9.4232e-08\\ 
40 & 0.5 & 0.25 & 6.7082e-05 & 0.5 & 0.25 & 1.8574e-05 & 0.5 & 1.5 & 4.7475e-07 & 0.5 & 2 & 2.6221e-07\\
45 & 0.5 & 1 & 7.2641e-05 & 0.5 & 4.25 & 1.3092e-04 & 0.5 & 1.25 & 5.1101e-07 & 0.5 & 1.75 & 2.4520e-07\\
50 & 0.4 & 1.5 & 9.7024e-04 & 0.4 & 3.25 & 1.0915e-03 & 0.5 & 1 & 8.6070e-06 & 0.5 & 1.25 & 6.6680e-06\\
55 & 0.6 & 4.75 & 1.1300e-03 & 0.6 & 7.25 & 1.1785e-03 & 0.5 & 0.75 & 1.8330e-05 & 0.5 & 0.75 & 1.8329e-05\\
60 & 1 & 0.5 & 1.9681e-02 & 0.7 & 0.25 & 1.6967e-02 & 0.5 & 0.75 & 7.0032e-05 & 0.5 & 0.75 & 1.4334e-04 \\
65 & 1.2  & 0.25 & 2.2391e-02 & 1.2 & 0.25 & 2.2419e-02 & 0.5 & 0.75 & 3.1392e-04 & 0.5 & 0.75 & 2.1468e-04 \\ 
70 & 1.4  & 0.25 & 5.8462e-02 & 1.3 & 0.25 & 7.7659e-02 & 0.5 & 0.75 & 1.0067e-03 & 0.4 & 0.75 & 8.0880e-04 \\
75 & 1.4 & 0.25 & 1.5099e-01 & 1.4 & 0.25 & 1.5681e-01 & 0.6 & 0.75 & 8.6516e-04 & 0.5 & 0.75 & 8.9362e-04 \\
80 & 1.8 & 0.25 & 3.2058e-01 & 1.8 & 0.25 & 2.7602e-01 & 0.4 & 0.5 & 8.3497e-04 & 0.4 & 0.75 & 6.7363e-04\\
\bottomrule
\end{tabular}
}
\label{table2}  
\end{table}

\noindent \textbf{Example 2.} Consider the IHOC \eqref{eq:2}–\eqref{eq:1} with $g\left(\bmx(t),\bmu(t)\right)=x_{1}^{2}(t)+x_{2}^{2}(t)/2+u^{2}(t)/4, \bmf\left(\bmx(t),\bmu(t)\right)=\left[x_{2}(t), 2x_{1}(t)\right.\\\left. -x_{2}(t)+u(t)\right]^{t}$, and $\bmx_{0}=[-4,4]^{t}$. The exact state and control variables are
\begin{subequations}
\begin{align}
\bmx^{*}(t) &= \exp\left(\C{M}t\right)\bmx(0),\label{eq:exact_state_ex2}\\
u^{*}(t) &= -\bm{K} \bmx^{*}(t),\label{eq:exact_control_ex2}
\end{align}
where
\begin{align}
{\C{M}} &= \left[ {\begin{array}{*{20}{c}}
0&1\\
-2.82842712474619&-3.557647291327851
\end{array}} \right],\quad \text{and}\label{eq:C1}\\
{\bm{K}} &= \left[4.828427124746193;2.557647291327851\right],\label{eq:g1}
\end{align}
\end{subequations}
cf. \cite{kirk1970prentice,fahroo2008pseudospectral,garg2011direct}. This example is a linear quadratic regulator problem with an optimal cost functional value $J^{*}= 19.85335656362790$, rounded to $16$ significant digits, as obtained by MATLAB using the Symbolic Math Toolbox. Figure \ref{fig1ex2} shows the plots of the exact optimal state and control variables and their approximations obtained through GGR-IPS2-EALMM using some parameter values. Table \ref{table1_ex2} shows the $MAE_{x,u}$ of the LGR-PS method of \citet{garg2011direct} and the smallest corresponding $MAE_{x,u}$ and $AE_{J}$ pairs of the GGR-IPS2-EALMM at the collocation points using $(\tbmx_{0}, \tbmu_{0}) \in \F{\Omega}$ and several values of $n$. The GGR-IPS2-EALMM proves again to be superior in terms of accuracy for a small range of mesh size, as it converges rapidly to near optimal solutions at a much higher-rate than that of \citet{garg2011direct}. However, the superb accuracy of the method starts to decline when $n$ grows larger as anticipated earlier. Notice again here that the best accuracy in all cases was recorded at $\alpha = 0.5$ with SRCIC $\Upsilon_{0.5,0.5}^G = \{0.5\}$. 

\begin{table}[ht]
\centering
\caption{The uncertainty intervals of the smallest $MAE_{x,u}$ obtained by the method of \citet{garg2011direct} at the collocation points and the corresponding smallest $MAE_{x,u}$ and $AE_{J}$ pairs obtained by the GGR-IPS2-EALMM using $n = 4(5)34$ and $(\tbmx_{0}, \tbmu_{0}) \in \F{\Omega}$. All approximations were rounded to $5$ significant digits.} 
\begin{tabular}{c c c c}
\toprule
 & Method of \citet{garg2011direct}  & \multicolumn{2}{c}{GGR-IPS2-EALMM} \\
    &                                  & $(\tbmx_{0}, \tbmu_{0}) \in \F{\Omega}_1$  & $(\tbmx_{0}, \tbmu_{0}) \in \F{\Omega}_2$ \\  
$n+1$ &   $MAE_{x,u}$ uncertainty interval &  $MAE_{x,u}/AE_{J}/\alpha/L$                              &  $MAE_{x,u}/AE_{J}/\alpha/L$ \\
\midrule
5 &  (1e-01,1)  & 1.6959e-03/1.7032e-05/0.5/1.75 & 1.6959e-03/1.7032e-05/0.5/1.75 \\           
10 &  (1e-02,1e-01)  & 9.4155e-08/1.7870e-12/0.5/2.5 & 8.5595e-07/9.6705e-12/0.5/3.25 \\           
15  &  (1e-04, 1e-02)  &  2.0165e-08/4.9489e-12/0.5/5.5 & 1.2501e-08/2.1316e-13/0.5/2.5 \\ 
20  &  (1e-04, 1e-03)  &  8.2183e-09/1.6485e-12/0.5/3.5 & 6.2243e-09/1.0040e-11/0.5/2.5 \\
25  &  (1e-05, 1e-04)  &  1.7564e-07/3.6451e-12/0.5/3.5 & 8.4676e-08/1.7483e-11/0.5/3\\
30  &  (1e-06, 1e-05)  &  1.1714e-06/3.1175e-11/0.5/1.75 & 2.4535e-06/4.9347e-12/0.5/1.75\\
35  &  (1e-06, 1e-05)  &  6.9522e-06/9.9437e-11/0.5/1.5 & 4.5463e-06/1.0522e-10/0.5/1.5\\
\bottomrule
\end{tabular}
\label{table1_ex2} 
 \end{table} 
 
\begin{figure}[h]
\centering
\includegraphics[scale=0.7]{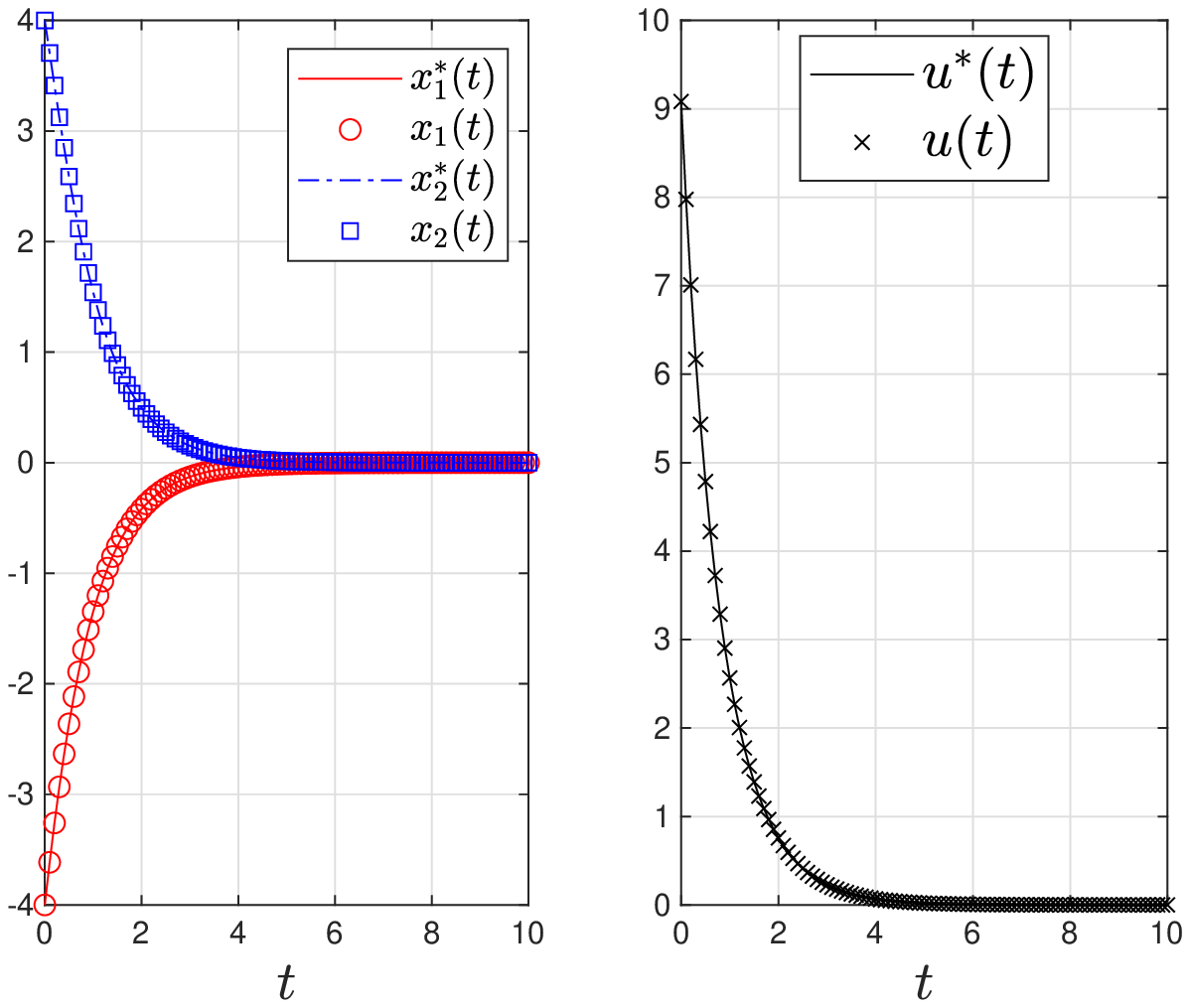}
\caption{The plots of the exact optimal states and control of Example $2$ and their collocated approximations obtained through GGR-IPS2-EALMM on the interval $[0,10]$ using $n=9, \alpha=0.5, L=2.5$, and $(\tbmx_{0}, \tbmu_{0}) \in \F{\Omega}_1$. All figures were generated using $101$ linearly spaced nodes from $0$ to $10$.}
\label{fig1ex2}
\end{figure}
 
Another comparison between the GGR-IPS2-fmincon-int, GGR-IPS2-fmincon-sqp, and the transformed LGR method of \citet{shahini2018transformed} is shown in Table \ref{table2_ex2}. We can clearly see that the former two methods generally yield smaller $AE_{J}$ values. The rise and fall of accuracy as the mesh size grows is again peculiar in the observed approximations in agreement with the presented divergence analysis in Section \ref{subsec:DOTPCS}. Remarkably, a match with the exact $J^*$ to full machine precision was recorded as early as $n = 20$ indicating an exceedingly accurate numerical scheme with exponential convergence for coarse meshes. All smallest errors reported by the current methods occurred at $\alpha = 0.5$, except for $n \in \{90, 100\}$, where collocations at $\alpha = 0$ and $0.8$ furnished higher accuracy. On the other hand, the rounded errors in \cite{shahini2018transformed} decay to $1.35 \times 10^{-09}$ as soon as $n = 30$ using the algebraic map, but surprisingly cease to vary any further for $n = 40(10)100$(!)

\begin{table}[ht]
\centering
\caption{The $AE_{J}$ of the method of \citet{shahini2018transformed} with the algebraic map $T_{1,L}^{(\alpha)}$ and the smallest $AE_{J}$ obtained by GGR-IPS2-fmincon-int and GGR-IPS2-fmincon-sqp using $(\tbmx_{0}, \tbmu_{0}) \in \F{\Omega}$. All approximations were rounded to $5$ significant digits.} 
\resizebox{\columnwidth}{!}{%
\begin{tabular}{*{7}{c}}
\toprule
 & \multicolumn{2}{c}{Method of \citet{shahini2018transformed}} & \multicolumn{4}{c}{GGR-IPS2}\\
 & Algebraic map $T_{1,L}^{(\alpha)}$ & Logarithmic map $T_{2,L}^{(\alpha)}$ & \multicolumn{2}{c}{fmincon-int}& \multicolumn{2}{c}{fmincon-sqp}\\ 
 & & & $(\tbmx_{0}, \tbmu_{0}) \in \F{\Omega}_1$ & $(\tbmx_{0}, \tbmu_{0}) \in \F{\Omega}_2$ & $(\tbmx_{0}, \tbmu_{0}) \in \F{\Omega}_1$ & $(\tbmx_{0}, \tbmu_{0}) \in \F{\Omega}_2$\\
$n$ & \multicolumn{2}{c}{$AE_{J}$} & $AE_{J}/\alpha/L$&  $AE_{J}/\alpha/L$ & $AE_{J}/\alpha/L$&  $AE_{J}/\alpha/L$\\
\midrule
10  &  4.82e-05 & 3.92e-05&5.3291e-14/0.5/2.5 &5.3291e-14/0.5/2.5 & 1.0658e-14/0.5/4.25 & 7.1054e-15/0.5/2.5  \\                               
20  &  4.88e-09 & 1.76e-06&0/0.5/6            &         0/0.5/5.25&               0/0.5/5.75 &      0/0.5/5.75\\ 
30  &  1.35e-09 & 2.73e-07&7.1054e-15/0.5/3   &1.4211e-14/0.5/5.25& 1.0658e-14/0.5/2.5  &1.7764e-14/0.5/5.5 \\
40  &  1.35e-09 & 7.24e-08&1.0658e-14/0.5/5.75&3.1974e-14/0.5/5.25& 1.0303e-13/0.5/6    &1.2079e-13/0.5/5   \\
50  &  1.35e-09 & 2.63e-08&8.8818e-14/0.5/10  &1.1013e-13/0.5/2.5 & 5.1514e-13/0.5/5    &5.9686e-13/0.5/5   \\
60  &  1.35e-09 & 1.19e-08&1.7053e-13/0.5/2.5 &4.0146e-13/0.5/10  & 8.3844e-13/0.5/5.5  &1.0409e-12/0.5/5.75\\
70  &  1.35e-09 & 6.45e-09&2.7001e-13/0.5/5   &4.0501e-13/0.5/10  & 1.2967e-12/0.5/10   &1.7337e-12/0.5/5   \\
80  &  1.35e-09 & 4.06e-09&8.2423e-13/0.5/5.5 &8.3134e-13/0.5/5.5 & 1.7977e-12/0.5/10   &2.7676e-12/0.5/2.5 \\
90  &  1.35e-09 & 2.90e-09&1.1072e-10/0/5.25  &2.1283e-09/0/2.5   & 1.4021e-09/0/3.5    & 1.4747e-10/0/2.5   \\   
100  & 1.35e-09 & 2.29e-09&9.5391e-08/0.8/4   &1.9779e-07/0.8/3.5 & 5.3783e-08/0.8/4    & 1.6219e-07/0.8/3.75\\
\bottomrule
\end{tabular}
}
\label{table2_ex2}   
 \end{table}

\section{Conclusion and Future Work}
\label{sec:CAFW1}
Direct IPS methods for solving IHOCS using the logarithmic mapping $T_{2,L}^{(\alpha)}$ and the developed SR-interpolation and barycentric quadrature formulas can produce excellent approximations to the optimal state and control variables for relatively small/medium mesh grids. However, this class of methods often suffer from numerical instability for fine meshes when endowed with any of the parametric maps $T_{i,L}^{(\alpha)}, i = 1, 2$; therefore, as the mesh size grows, they are not as useful as one might hope for computing the optimal state and control trajectories to within high precision. In fact, it has been shown in the current paper that two sources of difficulty arise in handling the horizon in IHOCs by a domain transformation that maps the infinite horizon to the finite horizon $[-1, 1)$ through the algebraic and logarithmic maps $T_{i,L}^{(\alpha)}, i = 1, 2$, namely (i) the exponential growth of the mappings surface slopes near the right boundary $\tau = 1$, which increase the truncation errors produced in the FHOCI discretization without bounds as $\tau \to 1$, and (ii) despite the fact that both mappings $T_{i,L}^{(\alpha)}, i = 1, 2$ have a singularity at $\tau = 1$, and we actually never evaluate them at the singularity, since the GGR collocation points are strictly less than 1, their derivatives are sensitive to input data errors for arguments near $\tau = 1$; thus, both NLP1 and NLP2 are ill-conditioned for $\tau \approx 1$. These theoretical facts as well as the observed empirical data are considerable reasons to say that typical direct spectral/PS- and IPS-methods based on classical Jacobi polynomials and the parametric maps $T_{i,L}^{(\alpha)}, i = 1, 2$ are foreseen to diverge as the mesh size grows large, if the computations are carried out using floating-point arithmetic and the discretizations use a single mesh grid whether they are of Gauss type or equally-spaced. 

While Gegenbauer polynomials associated with certain nonpositive $\alpha$-values are well suited for FGGR-based polynomial interpolations in Lagrange-basis form over fine meshes as shown by \citet{elgindy2018high1}, this paper asserts that Gegenbauer polynomials associated with certain nonnegative $\alpha$-values are more apt for GGR-based SR-interpolations over fine meshes. Moreover, for coarse mesh grids, Legendre polynomials are particularly (near) optimal basis polynomials for GGR-based SR-collocations of FHOCIs, as argued in Section \ref{subsubsec:TLcfggr1} and sustained through numerical simulations. On the other hand, Gegenbauer polynomials associated with certain positive values of $\alpha \in (1/2, 2]$ are optimal for IHOCIs collocations over fine mesh grids, as they can largely slow down the exponential growth of both parametric maps  $T_{i,L}^{(\alpha)}, i = 1, 2$, and their associated GGR collocation points are less dense near $\tau = 1$; thus, the sensitivity of computing $T'_{i,L}, i = 1, 2$ at arguments near $\tau = 1$ is significantly attenuated. The paper also shows that the parametric map $T_{1,L}^{(\alpha)}$ is more severely sensitive for $\tau \approx 1$ than $T_{2,L}^{(\alpha)}$ and the family $\left\{\left(T_{1,L}^{(\alpha)}\right)^m\right\}_{m=0}^\infty$ grows faster than $\left\{\left(T_{2,L}^{(\alpha)}\right)^m\right\}_{m=0}^\infty$ as $\tau \to 1$; therefore, $T_{2,L}^{(\alpha)}$ is more apt for the domain transformation of IHOCs than $T_{1,L}^{(\alpha)}$ for\\[0.5em] collocation points of Gauss/GR type. 

It is worthy to mention that direct IPS methods based on the proposed Gegenbauer SR-collocation can exhibit faster convergence rates for coarse meshes by regulating the map scaling parameter $L$ and the Gegenbauer parameter $\alpha$. In light of the stability analysis conducted in Section \ref{subsubsec:TLcfggr1}, GGR-based SR-collocations of well conditioned problems are generally endorsed for $\alpha$-values within/near the SRCIC $\Upsilon_{1/2,1}^G$; the current study also supports this rule of thumb for IHOCs when converted into FHOCIs through the parametric maps $T_{i,L}^{(\alpha)}, i = 1, 2$ and then collocated at relatively coarse mesh grids. However, the question of how can we find the optimal map scaling parameter $L^*$ for IHOCs remains open. An interesting direction for future works may involve a study of new mappings with smaller growth rates and derivatives of less sensitivity to input data errors. 

\appendix

\section{The Barycentric GRDM}
\label{sec:TDIM}
To construct the barycentric GRDM, we follow the derivation presented in \cite{elgindy2018high} and multiply both sides of Eq. \eqref{eq:new1} by $x-\tau_{j}\,\forall j$ to render them differentiable at $x=\tau_{j}$ such that
\begin{equation}
\C{L}_{n,i}(x) \sum_{k=0}^{n}\frac{\xi_{k}\left(x-\tau_{j}\right)}{x-\tau_{k}}=\frac{\xi_{i}\left(x-\tau_{j}\right)}{x-\tau_{i}},\quad i=0,\ldots,n.
\label{eq:newD1}
\end{equation}
Letting $\displaystyle{S(x)= \sum_{k=0}^{n}\frac{\xi_{k}\left(x-\tau_{j}\right)}{x-\tau_{k}}}$ and differentiating Eq. \eqref{eq:newD1} with respect to $x$ yields
\begin{equation}
S(x)\C{L}'_{n,i}(x)+\C{L}_{n,i}(x) S'(x)= \xi_{i} \left(\frac{x-\tau_{j}}{x-\tau_{i}}\right)',\quad i = 0, \ldots, n.
\label{eq:newD2}
\end{equation}
Since $\displaystyle{S(\tau_{j})=\xi_{j}, S'(\tau_{j})=\sum_{j \neq k}\frac{\xi_{k}}{
\tau_{j}-\tau_{k}}}$, and $\C{L}_{n,i}(\tau_{j})=0\, \forall i \neq j$, the off-diagonal elements of the differentiation matrix $\F{D} = (d_{j,i})_{0 \le j,i \le n}$ can be calculated by the following formula:
\begin{equation}
d_{j,i}= \C{L}'_{n,i}(\tau_{j})= \frac{\xi_{i}/\xi_{j}}{\tau_{j}-\tau_{i}},\quad \forall i \neq j.
\label{eq:newD3}
\end{equation}
For $i=j$, we have $\displaystyle{\sum_{i=0}^{n} \C{L}_{n,i}\left(x\right)=1}$, so $\displaystyle{\sum_{i=0}^{n} \C{L}'_{n,i}\left(x\right)=0}$, and 
\begin{equation}
d_{i,i}= \C{L}'_{n,i}\left(\tau_{i}\right) = -\sum_{j \neq i} \C{L}'_{n,j}\left(\tau_{i}\right)= - \sum\limits_{i \ne j} {{d_{i,j}}},\quad i = 0, \ldots, n. 
\label{eq:newD4}
\end{equation}
Hence, the derivative of a real-valued function $f \in C^{1}[-1,1]$ can be approximated at the GGR points by the following formula:
\begin{equation}
f'(\tau_{j}) \approx \sum_{i=0}^{n} d_{j,i} f_{i},\quad j = 0, \ldots, n.
\label{eq:newD5}
\end{equation}

\section{Computational algorithms}
\label{app:Alg1}
\begin{algorithm}[H]
\renewcommand{\thealgorithm}{B.1}
\caption{The First Switching Formula of the Barycentric Weights for the GGR Points}
\label{algorithm:1}
\begin{algorithmic}[1]
\Statex \textbf{Input}: Positive integer $n$; a real number $\alpha > -1/2$; the set of GGR points and quadrature weights $\lbrace\tau_{i},\varpi_{i}\rbrace_{i=0}^{n}$; a relatively small positive real number $\varepsilon$.
\Statex \textbf{Output}: Barycentric weights $\xi_{i},i=0,\ldots,n$.
\State $\xi_{0} \gets -\sqrt{(2\alpha+1)\varpi _{0}}$.
 \For{$i=1$ to $n$}
 \If{$\left|1-\tau_{i}\right| > \varepsilon$}
   \State  {$\xi_{i} \gets (-1)^{i-1}\sqrt{\left(1-\tau_{i}\right)\varpi _{i}}$}.
	\Else
	 \State {$\xi_{i} \gets (-1)^{i-1} \sin{\left(\frac{1}{2}\cos^{-1}{\tau_{i}}\right)} \sqrt{2 \varpi _{i}}$}.
	  \EndIf
	\EndFor
\State Stop.
\end{algorithmic}
\end{algorithm}

\begin{algorithm}[H]
\renewcommand{\thealgorithm}{B.2}
\caption{The Second Switching Formula of the Barycentric Weights for the GGR Points}
\label{algorithm:2}
\begin{algorithmic}[1]
\Statex \textbf{Input}: Positive integer $n$; a real number $\alpha > -1/2$; the set of GGR points and quadrature weights $\lbrace\tau_{i},\varpi_{i}\rbrace_{i=0}^{n}$; a relatively small positive real number $\varepsilon$.
\Statex \textbf{Output}: Barycentric weights $\xi_{i},i=0,\ldots,n$.
\State $\xi_{0} \gets -\sqrt{(2\alpha+1)\varpi _{0}}$.
 \For{$i=1$ to $n$}
 \If{$\left|1-\tau_{i}\right| > \varepsilon$}
   \State  {$\xi_{i} \gets (-1)^{i-1}\sqrt{\left(1-\tau_{i}\right)\varpi _{i}}$}.
	\Else
	 \State {$\xi_{i} \gets (-1)^{i-1} \sin{\left(\cos^{-1}{\tau_{i}}\right)} \displaystyle{\sqrt{\frac{\varpi _{i}}{1+\tau_{i}}}}$}.
	  \EndIf
	\EndFor
\State Stop.
\end{algorithmic}
\end{algorithm}
\bibliographystyle{elsarticle-num-names}
\bibliography{Bib}
\end{document}